\colorlet{darkblue}{blue!75!black}
\newcommand{\A}{{\mathbb{A}}}
\newcommand{\N}{{\mathbb{N}}}
\newcommand{\Z}{{\mathbb{Z}}}
\newcommand{\Q}{{\mathbb{Q}}}
\newcommand{\C}{\mathcal{C}}
\newcommand{\PP}{{\mathbb P}}
\newcommand{\pan}{{\mathbb{P}^{1,\mathrm{an}}_K}}	
\newcommand{\aan}{{\mathbb{A}^{1,\mathrm{an}}_K}}	
\newcommand{\calC}{{\mathcal C}}
\newcommand{\calE}{{\mathcal E}}
\newcommand{\calN}{{\mathcal N}}
\newcommand{\calO}{{\mathcal O}}
\newcommand{\frakC}{{\mathfrak C}}
\newcommand{\frakE}{{\mathfrak E}}
\newcommand{\fraks}{{\mathfrak s}}
\newcommand{\s}{\mathfrak s}
\newcommand{\bigslant}[2]{{\raisebox{.2em}{$#1$}\left/\raisebox{-.2em}{$#2$}\right.}}
\newcommand{\X}{\mathscr X}			
\newcommand{\an}{\mathrm{an}}
\newcommand{\pr}{\mathrm{pr}}
\newcommand{\ansk}[1]{{\Sigma^\mathrm{an}(#1)}}
\newcommand{\anskC}{{\Sigma^\mathrm{an}(C)}}
\newcommand{\Kalg}{{\widehat{K^\mathrm{alg}}}}
\DeclareMathOperator{\Spf}{Spf}
\DeclareMathOperator{\Spec}{Spec}
\DeclareMathOperator{\Gal}{Gal}
\DeclareMathOperator{\Sp}{sp}		
\DeclareMathOperator{\lcm}{lcm}
\DeclareMathOperator{\Ends}{Ends}
\newtheoremstyle{plain2}    
  {}            
  {}            
  {\itshape}    
  {}            
  {\bfseries}   
  {.}           
  {5pt plus 1pt minus 1pt}  
  {{\thmnumber{(#2)} \thmname{#1}{\thmnote{ (#3)}}}}          
\theoremstyle{plain2}
\newtheorem{theorem}[subsection]{Theorem}
\newtheorem{corollary}[subsection]{Corollary}
\newtheorem{lemma}[subsection]{Lemma}
\newtheorem{proposition}[subsection]{Proposition}
\newtheoremstyle{definition2}    
  {}   
  {}   
  {\normalfont}  
  {}       
  {\bfseries} 
  {.}        
  {5pt plus 1pt minus 1pt} 
  {{\thmnumber{(#2)} \thmname{#1}{\thmnote{ (#3)}}}}          
\theoremstyle{definition2}
\newtheorem{remark}[subsection]{Remark}
\newtheorem{remarks}[subsection]{Remarks}
\newtheorem{example}[subsection]{Example}
\newtheorem{examples}[subsection]{Examples}
\newtheoremstyle{stepstyle}
  {}  
  {}   
  {\normalfont}  
  {\parindent}       
  {\itshape} 
  {.}         
  {5pt plus 1pt minus 1pt} 
  {{\thmname{#1} \thmnumber{#2}{\thmnote{: #3}}}}          
\theoremstyle{stepstyle}
\newtheoremstyle{point}
  {}     {}   
  {\normalfont}  
  {}       
  {\bfseries} 
  {}         
  {5pt plus 1pt minus 1pt} 
  {{\thmname{#1}(\thmnumber{#2})\thmnote{ {#3}.}}}          
\theoremstyle{point}
\newtheorem{point}[subsection]{}
\newcommand{\pa}[1]{\begin{point}#1\end{point}}              
\newcommand{\Pa}[2]{\begin{point}[#1]#2\end{point}}          
\newtheoremstyle{point*}
  {}     {}   
  {\normalfont}  
  {}       
  {\bfseries} 
  {}         
  {5pt plus 1pt minus 1pt} 
  {{\thmname{(#1)}\thmnote{ #3.}}}          
\theoremstyle{point*}
\newtheorem{point*}[subsection]{}
\newcommand{\upa}[2]{\begin{point*}[#1]#2\end{point*}}              
\numberwithin{equation}{section}
\newtheoremstyle{subpoint}
  {}     {}            
  {\normalfont}  
  {}                   
  {} 
  {}         
  {5pt plus 1pt minus 1pt} 
  {{\thmname{#1}(\thmnumber{#2})\thmnote{\textit{ #3.}}}}          
\theoremstyle{subpoint}
\newtheorem{subpoint}[subsubsection]{}
\title[Triangulations of non-archimedean curves]{Triangulations of non-archimedean curves, semi-stable reduction, and ramification}
\subjclass[2010]{14D10 (primary) and 14G22, 14E22 (secondary)} 
\date{\today}
\author{Lorenzo Fantini}
\address{Goethe-Universit\"at Frankfurt, Institut f\"ur Mathematik, Frankfurt am Main, Germany}
\email{\href{mailto:fantini@math.uni-frankfurt.de}{fantini@math.uni-frankfurt.de}}
\urladdr{\url{https://lorenzofantini.eu/}}
\author{Daniele Turchetti}
\address{Dalhousie University, Department of Mathematics \& Statistics, Halifax, Nova Scotia, Canada}
\email{\href{mailto:daniele.turchetti@dal.ca}{daniele.turchetti@dal.ca}}
\urladdr{\url{https://www.mathstat.dal.ca/~dturchetti/}}
\begin{document}

\begin{abstract}
	Let $K$ be a complete discretely valued field with algebraically closed residue field and let $\mathfrak C$ be a smooth projective and geometrically connected algebraic $K$-curve of genus $g$.
	Assume that $g\geqslant 2$, so that there exists a minimal finite Galois extension $L$ of $K$ such that $\mathfrak C_L$ admits a semi-stable model.
	In this paper, we study the extension $L|K$ in terms of the \emph{minimal triangulation} of $C$, a distinguished finite subset of the Berkovich analytification $C$ of $\mathfrak C$.
	We prove that the least common multiple $d$ of the multiplicities of the points of the minimal triangulation always divides the degree $[L:K]$. 
	Moreover, in the special case when $d$ is prime to the residue characteristic of $K$, then we show that $d=[L:K]$, obtaining a new proof of a classical theorem of T. Saito.
	We then discuss curves with marked points, which allows us to prove analogous results in the case of elliptic curves, whose minimal triangulations we describe in full in the tame case.
	In the last section, we illustrate through several examples how our results explain the failure of the most natural extensions of Saito's theorem to the wildly ramified case.
\end{abstract}

\maketitle

\setcounter{tocdepth}{1}
\tableofcontents

\section{Introduction}

Let $R$ be a complete discrete valuation ring with algebraically closed residue field $k$ of characteristic $\mathrm{char}(k)=p\geqslant0$ and let $K$ be the fraction field of $R$.
In this paper, we are interested in studying the structure of smooth, projective, and geometrically connected curves over $K$, using techniques of non-archimedean analytic geometry.

A classical tool for understanding such a $K$-curve $\frakC$ comes from the celebrated semi-stable reduction theorem of Deligne and Mumford \cite{DeligneMumford1969}.
This fundamental result, proven in 1969, states that, after a suitable finite base field extension $L$ of $K$, the curve $\frakC$ acquires \emph{semi-stable reduction} over $L$, that is there exists a model $\calC$ of $\frakC_L$ over the valuation ring of $L$ whose special fiber is reduced and has at worst nodal singularities.
Such a model is called a \emph{semi-stable model} of $\frakC_L$.

Whenever the genus $g(\frakC)$ of $\frakC$ is at least 2, there exists a \emph{minimal} Galois extension $L$ of $K$ such that $\frakC$ acquires semi-stable reduction over $L$.
The following question, which has been extensively studied in the last half century, is therefore very natural.

\smallskip
\noindent{\bf Question:} What is the minimal Galois extension $L$ of $K$ such that $\frakC$ acquires semi-stable reduction over $L$?

\medskip

The situation is quite well understood whenever this minimal extension $L|K$ is \emph{tamely ramified}, that is its degree $[L:K]$ is prime to the residue characteristic $p$ of $K$.
Indeed, $\frakC$ acquires semi-stable reduction after a tamely ramified base field extension if and only if the special fiber $(\C_{\mathrm{min-snc}})_k$ of the minimal regular strict normal crossings (snc) model $\C_{\mathrm{min-snc}}$ of $\frakC$ over $R$ satisfies the following condition: the multiplicity of each \emph{principal component} of $(\C_{\mathrm{min-snc}})_k$ is prime to $p$.
Recall that an irreducible component of $(\C_{\mathrm{min-snc}})_k$ is said to be principal either if it has positive genus or if it intersects the rest of the special fiber of $\C_{\mathrm{min-snc}}$ in at least three points.

This criterion is a simple reformulation of a result proven by T. Saito (see \cite[Theorem 3.11]{Saito1987}).
Moreover, in this case Halle \cite[Theorem 7.5]{Halle2010} proved that the minimal extension $L$ of $K$ such that $\frakC$ acquires semi-stable reduction over $L$ is the unique tamely ramified extension of $K$ whose degree is the least common multiple of the multiplicities of the principal components of $(\C_{\mathrm{min-snc}})_k$.

However, in the general case, that is when it is required to perform a \emph{wildly ramified} base change of $K$, the question remains very poorly understood.
Indeed, in this case there appears to be no simple generalization of Saito's criterion that can hold true, as in general the degree of $L|K$ does not divide nor is divisible by the least common multiple of the multiplicities of the principal components of $(\C_{\mathrm{min-snc}})_k$.
Some conditions have conjecturally been proposed by Lorenzini \cite{Lorenzini2010}, but only partial results have been obtained, by Raynaud \cite{Raynaud1990} and Obus \cite{Obus2012b}, in the special case of a Galois covering of the projective line with cyclic $p$-Sylow group.
This suggests that the minimal regular snc model of $\frakC$ is not the correct object to look at in order to approach the question.

\medskip

In this paper, we propose to study this problem using tools of non-archimedean analytic geometry.
Denote by $C$ the \emph{analytification} of $\frakC$, that is the non-archimedean analytic curve associated with $\frakC$, in the sense of Berkovich theory \cite{Berkovich1990}.
The analytic curve $C$ is a Hausdorff, locally compact, and locally contractible topological space, whose global structure is deeply related to the combinatorics of the $R$-models of $\frakC$ and hence to the semi-stable reduction theorem.
Indeed, with each component of the special fiber of a normal model of $\frakC$ over $R$ we can associate a valuation which is a point of $C$.
Such points are called \emph{type 2} points of $C$, and with every \emph{vertex set} of $C$, that is a finite non-empty set of type 2 points of $C$, we can conversely associate a (possibly singular) normal $R$-model of $\frakC$.
Building upon results of Bosch--L\"utkebohmert \cite{BoschLuetkebohmert1985} (who worked within the framework of Tate's rigid geometry), this approach allows to deduce that $\frakC$ admits semi-stable reduction over $K$ if and only if $C$ can be decomposed as a disjoint union of a vertex set, finitely many \emph{open annuli}, and a family of \emph{open discs}.
This point of view can be exploited to deduce new proofs of Deligne and Mumford's theorem (see \cite{BoschLuetkebohmert1985}, \cite{Temkin2010}, and \cite{Ducros}).

From this viewpoint, the semi-stable reduction theorem is equivalent to the existence of a vertex set of $C$ such that the connected components of its complement become isomorphic to discs and annuli after a finite base change.
More precisely, we call \emph{virtual disc} any connected $K$-analytic space that becomes isomorphic to a union of open discs, and \emph{virtual annulus} any connected $K$-analytic space that becomes isomorphic to a union of open annuli, after passing to a finite separable extension $L$ of $K$.
Whenever such an extension $L$ of $K$, which is said to \emph{trivialize} the virtual disc or annulus, is tamely ramified, the $K$-analytic space we started with is well understood: in the case of virtual discs, this was studied by Ducros \cite{Ducros2013} and Schmidt \cite{Schmidt2015}, while the case of virtual annuli was the object of a previous work by the authors \cite{FantiniTurchetti2018} (see also \cite{Chapuis2017}).
In particular, tamely ramified virtual annuli are determined by whether they have one or two boundary points (or rather, more precisely, one or two \emph{ends}).

A \emph{triangulation} of $C$ is then defined as a vertex set whose complement in $C$ consists of virtual discs and virtual annuli.
This generalizes slightly a notion due to Ducros \cite{Ducros2008, Ducros}, who additionally required the annuli among the components of $C\setminus V$ to have two ends; we opted to modify Ducros's terminology and call this more restrictive version a \emph{strong triangulation} of $C$.
The starting point of the investigation of the present article is the fundamental fact that there exists a unique triangulation $V_{\mathrm{min-tr}}$ of $C$ that is minimal under inclusion.
As a consequence, it is simple to show that the minimal extension $L$ of $K$ such that $\frakC$ acquires semi-stable reduction over $L$ is the minimal extension that trivializes all the connected components of $C\setminus V_{\mathrm{min-tr}}$ (see Proposition~\ref{proposition_mintr_semistable_extension}), regardless of whether $L$ is tamely ramified over $K$ or not.
For this reason, the minimal triangulation $V_{\mathrm{min-tr}}$ of $C$, and its associated $R$-model, encodes the information needed to describe the extension $L|K$ even when the minimal regular snc model fails to do so.
Note that a unique minimal strong triangulation  $V_\mathrm{min-str}$ of $C$ also exists, but it is not as well-behaved under base change.

Observe that, if $x$ is a type 2 point of $C$, then the multiplicity of the exceptional component associated with $x$ in a suitable $R$-model of $C$ does not depend on the choice of the model, but only on the point $x$ (this multiplicity can also be seen as the index of the value group of $K$ in the value group of the completed residue field, in the sense of Berkovich's theory, at the point $x$).
We denote this multiplicity by $m(x)$.
Our first main result is the following.

\medskip 
\noindent{\bf Theorem (see Theorem~\ref{theorem_main}).}
{\it	Let $C$ be the analytification of a geometrically connected, smooth, and projective algebraic $K$-curve $\frakC$ of genus at least 2.
Let $V_{\mathrm{min-tr}}$ be the minimal triangulation of $C$ and let $L$ be the minimal Galois extension of $K$ such that $\frakC$ acquires semi-stable reduction over $L$.
Then
\[
\lcm\{m(x)\,|\,x\in V_{\mathrm{min-tr}}\}\,\Big|\,[L:K].
\]
}
\smallskip

We obtain this result by studying the behavior of multiplicities under base change (see Proposition~\ref{proposition_multiplicity_basechange}) and proving that the multiplicity of a type 2 point at the boundary of a disc or annulus must be one.

Moreover, we prove that the extension $L$ of $K$ in the theorem above is tamely ramified if and only if the residue characteristic of $K$ does not divide the least common multiple in the statement, in which case the latter actually coincides with the degree of $L|K$ (see \ref{theorem_main_effective}).
This allows to approach the classical Saito's criterion via a careful study of the vertex sets associated with the snc models of $\frakC$ (see Section~\ref{section_snc}), by relating the minimal strong triangulation $V_\mathrm{min-str}$ of $C$ to the minimal regular snc $R$-model $\C_\mathrm{min-snc}$ of $\frakC$ in the tame case.
Namely, we prove the following result.

\medskip 
\noindent{\bf Theorem (see Theorem~\ref{theorem_min-snc-triangulation}).}
{\it	Let $C$ be the analytification of a geometrically connected, smooth, projective algebraic $K$-curve $\frakC$ of genus at least 2.
Assume that $\frakC$ acquires semi-stable reduction after a tamely ramified extension of $K$.
Then the minimal strong triangulation of $C$ is the set of type 2 points associated with the principal components of the special fiber of the minimal regular snc model of $\frakC$ over $R$.
}
\smallskip\medskip

Moreover, we deduce from this result Halle's effective version of Saito's criterion (see Corollary~\ref{corollary_saito_effective}).

We prove the Theorem by first showing that the vertex set associated with the minimal regular snc model $\C_\mathrm{min-snc}$ of $\frakC$ is a triangulation, and then merging together virtual discs and annuli adjacent to non-principal type 2 points, which can be done thanks to Ducros' Fusion Lemmas (see Lemma~\ref{lemmas_fusion}).

In Section~\ref{section_elliptic_curves} we turn our attention to the study of elliptic curves.
While curves of genus less than two have no unique minimal triangulation, and no unique minimal extension of $K$ yielding semi-stable reduction exists, both problems can be resolved by endowing them with a finite set of marked rational points.
For example, in the case of an elliptic curve it is sufficient to mark its origin, and a unique minimal triangulation exists as soon as we require this point to be contained in a virtual disc rather than a virtual annulus.
By slightly modifying our approach to include marked points, both theorems above can be proven in the case of elliptic curves.
This gives us an interesting class of concrete examples.
Indeed, in \ref{point_elliptic_tame_additive} and \ref{point_elliptic_tame_multiplicative} we describe completely the minimal triangulation and the minimal strong triangulation of an elliptic curve in terms of its reduction type.
Conversely, we also show that the knowledge of the minimal triangulation of an elliptic curve, together with the isomorphism class of one component of its complement, is sufficient to retrieve the reduction type.

If $L$ is wildly ramified over $K$, then in general neither $V_{\mathrm{min-tr}}$ nor $V_{\mathrm{min-str}}$ contain, nor they are contained in, the set of type 2 points associated with the principal components of $(\C_\mathrm{min-snc})_k$.
This has several different explanations: examples discussing those pathologies are given in Section~\ref{section_wild}.
These include virtual discs whose minimal snc model has special fiber containing a principal component, and points that are associated with a rational curve in the special fiber of a $R$-model but become associated with a curve of positive genus after some base change;
 two phenomena that can only appear in the wildly ramified case.
The singularities of the connected components of $C\setminus V_{\mathrm{min-tr}}$ form a wide class of examples of the so-called \emph{wild quotient singularities}, and are therefore quite far from being understood; only a few very special cases have been recently investigated (see \cite{Lorenzini2013} and \cite{ObusWewers2020}). 
We believe that an approach via minimal triangulations can improve our understanding of this topic.
Minimal triangulations seem to be better suited than snc models to study the problem of semi-stable reduction in the wildly ramified case; we are convinced that in the future the approach proposed in this paper will contribute to shed some light on wild ramification.

\upa{Notation}{
	Let $K$ be a complete discretely valued field, let $R$ be its valuation ring and $k$ its residue field, and assume that $k$ is algebraically closed.
	Denote by $\pi$ a uniformizer of $R$ and endow $K$ with the $\pi$-adic absolute value such that $|\pi|=e^{-1}$.
	Denote by $p$ the characteristic exponent of $k$ (that is, $p=1$ if $\mathrm{char}(k)=0$ and $p=\mathrm{char}(k)$ otherwise).
	\\
	Let $\frakC$ be a geometrically connected, smooth, and projective algebraic curve of genus $g$ over $K$ and assume that $g\geqslant 2$.
	Finally, let $C$ be the non-archimedean $K$-analytic curve associated with $\frakC$, in the sense of Berkovich theory.
}

\upa{Acknowledgements}{
We are grateful to Antoine Ducros, Lars Halvard Halle, David Harbater, Qing Liu, Johannes Nicaise, Andrew Obus, and J\'er\^ome Poineau for several interesting discussions.
We are also extremely thankful to an anonymous referee for their thorough reading of the manuscript and numerous helpful comments.
During the preparation of this work, L.F. was partially supported by the project \emph{Lipschitz geometry of singularities (LISA)} of the \emph{Agence Nationale de la Recherche} (project ANR-17-CE40-0023) and by the \emph{Alexander von Humboldt Foundation}, while D.T. was partially supported by the \emph{European Research Council} (Starting Grant project ``TOSSIBERG'' no.637027), by the \emph{Fondation des Treilles}, and by the \emph{Atlantic Association for Research in the Mathematical Sciences}.
This article was written in part during a common stay at the \emph{Institut Henri Poincar\'e} in Paris, in the framework of the \emph{Research in Paris} program.
We thank the Institute for the warm hospitality and for the financial support.
}

\section{Non-archimedean curves}

In this section, we introduce non-archimedean curves and semi-affinoid spaces, and prove a simple result on the behavior of multiplicities of points under base change (Proposition~\ref{proposition_multiplicity_basechange}).

\pa{
	In this paper a \emph{curve} $X$ is a separated quasi-smooth strictly $K$-analytic space of pure dimension one, in the sense of Berkovich's theory of non-archimedean analytic geometry.
	In practice, all the analytic curves that we consider in this paper will be either the analytification $C=\frakC^\mathrm{an}$ of a smooth and projective algebraic curve $\frakC$ over $K$, or an open subspace of such a curve.
}

\pa{
	As our base field $K$ is not algebraically closed, throughout the paper we will often make use of the following general result \cite[Proposition 1.3.5.(i)]{Berkovich1990}.
	If $X$ is a $K$-analytic space and $L$ is a finite Galois extension of $K$ then the Galois group $\Gal(L|K)$ of $L|K$ acts continuously on the base change $X_L$ of $X$ to $L$ via automorphisms, and the base change morphism induces an isomorphism
	\(
	\bigslant{\textstyle X_L}{\Gal(L|K)} \stackrel{\sim}{\longrightarrow} X.
	\)
}

\Pa{Multiplicities}{
	A point $x$ of a curve $X$ is said to be a \emph{type 2} point if the residue field $\widetilde{\mathscr H(x)}$ of the complete residue field $\mathscr H(x)$ at $x$ is transcendental over $k$.
	If $x$ is a type 2 point of $X$, then by Abhyankar inequality the transcendence degree of $\widetilde{\mathscr H(x)}$ over $k$ is equal to 1 and the quotient of value groups $|\mathscr H(x)^\times| / |K^\times|$ is finite.
	The index ${\big[|\mathscr H(x)^\times|:|K^\times|\big]}$ is called the \emph{multiplicity} of $x$ and denoted by $m(x)$.
}

\begin{proposition}\label{proposition_multiplicity_basechange}
	Let $L$ be a finite, separable extension of $K$ of degree $d$, let $\tau\colon X_{L}\to X$ be the base change morphism, let $y$ be a type 2 point of $X_{L}$ and set $x=\tau(y)$.
	Then $m(x) \geqslant m(y)$ and $\frac{m(x)}{\gcd(m(x),d)}$ divides $m(y)$.
	If $L|K$ is tamely ramified or $p$ does not divide $m(x)$, then $m(y)=\frac{m(x)}{\gcd(m(x),d)}$.
\end{proposition}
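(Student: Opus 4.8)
The plan is to work entirely with the extension $\mathscr H(y)\,|\,\mathscr H(x)$ of complete discretely valued fields — both fields are complete and discretely valued precisely because $x$ and $y$ are type~$2$ — and to exploit that finite extensions of complete discretely valued fields are defectless. I would first record the ``formal'' part. Since $L|K$ is finite and $k$ is algebraically closed, $[|L^\times|:|K^\times|]=d$, and similarly $[|\mathscr H(x)^\times|:|K^\times|]=m(x)$; writing $m_K(y):=[|\mathscr H(y)^\times|:|K^\times|]$ and using $|L^\times|\subseteq|\mathscr H(y)^\times|$, one has $m(y)=m_K(y)/d$. The extension $\mathscr H(y)\,|\,\mathscr H(x)$ is finite separable of degree at most $d$ (it is a factor of $\mathscr H(x)\otimes_KL$), and its ramification index equals $m_K(y)/m(x)$. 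By defectlessness, $m_K(y)/m(x)=e(\mathscr H(y)|\mathscr H(x))\leqslant[\mathscr H(y):\mathscr H(x)]\leqslant d$, hence $m(y)\leqslant m(x)$. On the other hand $|\mathscr H(y)^\times|$ contains both $|\mathscr H(x)^\times|$ and $|L^\times|$, hence the group generated by $|\pi|^{1/m(x)}$ and $|\pi|^{1/d}$, namely the one generated by $|\pi|^{1/\lcm(m(x),d)}$; thus $\lcm(m(x),d)\mid m_K(y)$, and dividing by $d$ shows that $\frac{m(x)}{\gcd(m(x),d)}=\frac{\lcm(m(x),d)}{d}$ divides $m(y)$.

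For the equality it suffices to prove that $|\mathscr H(y)^\times|$ equals the group generated by $|\mathscr H(x)^\times|$ and $|L^\times|$, i.e. that $m_K(y)=\lcm(m(x),d)$. I would treat the tamely ramified case first, as the heart of the matter. Then $L=K(\pi^{1/d})$ with $p\nmid d$ (using $\mu_d\subseteq K$, as $k$ is algebraically closed), so $\mathscr H(y)=\mathscr H(x)(\pi^{1/d})$. Put $g=\gcd(m(x),d)$. Since $g\mid m(x)$ one can write $\pi=\varpi^{g}u$ with $\varpi\in\mathscr H(x)$ of absolute value $|\pi|^{1/g}$ and $u$ a unit, so that $\mathscr H(x)(\pi^{1/g})=\mathscr H(x)(u^{1/g})$ is unramified over $\mathscr H(x)$ (here $p\nmid g$). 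A short manipulation using $\gcd(m(x)/g,d/g)=1$ then rewrites $\mathscr H(y)$ as $\mathscr H(x)(\pi^{1/g})\bigl(\varpi_1^{1/(d/g)}\bigr)$ for a suitable uniformizer $\varpi_1$ of $\mathscr H(x)(\pi^{1/g})$, an Eisenstein — hence totally tamely ramified — extension of degree $d/g$; multiplying out the value groups gives $|\mathscr H(y)^\times|$ generated by $|\pi|^{1/\lcm(m(x),d)}$.

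The case $p\nmid m(x)$ I would reduce to the previous one. Let $K'=K(\pi^{1/m(x)})$, which is tame since $p\nmid m(x)$, and let $x'$ be the chosen preimage of $x$ over $K'$. Writing $\pi=\varpi^{m(x)}u$ as above, $\mathscr H(x')=\mathscr H(x)(\pi^{1/m(x)})=\mathscr H(x)(u^{1/m(x)})$ is unramified over $\mathscr H(x)$, so $|\mathscr H(x')^\times|=|\mathscr H(x)^\times|=|K'^\times|$: the point $x'$ has multiplicity $1$ over $K'$. Multiplicity-one type-$2$ points stay of multiplicity one under arbitrary base change (such a point is exactly one at which the completed residue field is unramified over the base field — the residue extension being automatically separable because $k$ is algebraically closed — and unramifiedness is stable under base change). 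Hence, for the common preimage $w$ of $y$ and of $x'$ over the compositum $\Lambda=LK'$, one gets $|\mathscr H(w)^\times|=|\Lambda^\times|$, i.e. $m_K(w)=[\Lambda:K]$; while $\mathscr H(w)=\mathscr H(y)(\pi^{1/m(x)})$ is unramified over $\mathscr H(y)$ (using $m(x)\mid m_K(y)$ from the divisibility already proved), so $m_K(w)=m_K(y)$. Finally $L\cap K'=K(\pi^{1/\gcd(m(x),d)})$: the inclusion $\supseteq$ holds because $\pi$ is a $\gcd(m(x),d)$-th power in $L$ (one finds an element of $L$ of the right absolute value since $\gcd(m(x),d)\mid d$, and the remaining unit is a $\gcd(m(x),d)$-th power by Hensel, $k$ being algebraically closed and $\gcd(m(x),d)$ prime to $p$), and $\subseteq$ is immediate from the value groups. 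Therefore $[\Lambda:K]=\frac{m(x)}{[L\cap K':K]}\,d=\lcm(m(x),d)$, so $m_K(y)=\lcm(m(x),d)$ and $m(y)=m(x)/\gcd(m(x),d)$.

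The main obstacle is precisely the equality statement: the inequality and the divisibility are formal consequences of defectlessness together with the inclusion of value groups, whereas the equality requires bounding how much \emph{extra} ramification base change can create at a type-$2$ point. The tamely ramified case is where the real work sits and is handled by the explicit Kummer-type computation above; the $p\nmid m(x)$ case is then obtained by peeling off the tame extension $K(\pi^{1/m(x)})$, the only non-combinatorial ingredient being the stability of unramifiedness — equivalently of multiplicity one — under base change.
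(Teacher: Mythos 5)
Your proposal is correct, and while the first part (the inequality and the divisibility) is the same value-group bookkeeping as in the paper, the two equality cases are handled by genuinely different arguments. For the tame case the paper distinguishes whether $L\subseteq\mathscr H(x)$ or $\mathscr H(x)\cap L=K$ and, in the latter situation, invokes Temkin's theory of graded reductions (via Ducros) to see that $1,\varpi,\dots,\varpi^{d-1}$ is an orthogonal basis of $\mathscr H(y)$ over $\mathscr H(x)$; you instead compute $\mathscr H(x)(\pi^{1/d})$ directly, splitting it as the unramified Kummer extension $\mathscr H(x)(u^{1/g})$ (where $\pi=\varpi^g u$, $g=\gcd(m(x),d)$) followed by an Eisenstein, hence totally tamely ramified, extension of degree $d/g$. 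This is more elementary and self-contained, handles both of the paper's sub-cases uniformly, and I checked that the ``short manipulation'' (using $am'+bd'=1$ to replace $(\varpi_F^{m'}w)^{1/d'}$ by a $d'$-th root of a uniformizer) does go through. For the case $p\nmid m(x)$ the paper simply peels off the maximal tame subextension $K_t$ of $L$ and uses the squeeze $m(x_t)\mid m(y)\leqslant m(x_t)$ coming from $\gcd(m(x_t),[L:K_t])=1$; your route via $K'=K(\pi^{1/m(x)})$, the stability of multiplicity one under arbitrary finite separable base change, and the computation $L\cap K'=K(\pi^{1/\gcd(m(x),d)})$ also works, but it is longer and requires the extra stability lemma. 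That lemma is true and is most cleanly justified not by appealing to ``unramifiedness is stable under base change'' (which needs care when the residue extension is infinite) but by the observation that if $e(\mathscr H(x')|K')=1$ then an Eisenstein polynomial over $K'^\circ$ defining $\Lambda$ remains Eisenstein over $\mathscr H(x')^\circ$, so $\mathscr H(x')\hat\otimes_{K'}\Lambda$ is a field, totally ramified over $\mathscr H(x')$ with a uniformizer of $\Lambda$ as uniformizer. You might record that one-line argument; otherwise the proof is complete.
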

	
\begin{proof}
	We have the following equalities of indices: 
	\begin{align*}
	\big[|\mathscr H(y)^\times|:|K^\times|\big] 
	& = \big[|\mathscr H(y)^\times|:|L^\times|\big] \cdot \big[|L^\times|:|K^\times|\big] = m(y) \cdot d\\
	& = \big[|\mathscr H(y)^\times|:|\mathscr H(x)^\times|\big] \cdot \big[|\mathscr H(x)^\times|:|K^\times|\big]\\
	& = \big[|\mathscr H(y)^\times|:|\mathscr H(x)^\times|\big] \cdot m(x).
	\end{align*}
	It follows that, since $\big[|\mathscr H(y)^\times|:|\mathscr H(x)^\times|\big] \leqslant \big[\mathscr H(y):\mathscr H(x)\big] \leqslant [L : K] = d$, we have $m(x)\geqslant m(y)$.
	Moreover, the group $|\mathscr H(y)^\times|$ contains the absolute values of the uniformizers of $L$ and $\mathscr H(x)$, which respectively have order $d$ and $m(x)$ in the quotient $|\mathscr H(x)^\times| / |K^\times|$.
	As a consequence, $\big[|\mathscr H(y)^\times|:|K^\times|\big]$ is a multiple of $\lcm\big(d, m(x)\big)=\frac{d\cdot m(x)}{\gcd(m(x),d)}$, which is to say that $\frac{m(x)}{\gcd(m(x),d)}$ divides $m(y)$, and the first part of the proposition is proved.
	Moreover, observe that the equality holds if and only if the value group $|\mathscr H(y)^\times|$ is generated by $|L^\times|$ and $|\mathscr H(x)^\times|$ over $|K^\times|$.
	Let us show that this is the case under the assumption that $L|K$ is tamely ramified.
	In this case, we have $L=\frac{K[Z]}{(Z^d-\pi)}$ and, since the residue field $k$ of $K$ is algebraically closed, $L$ is a Galois extension of $K$.
	We now study two situations that complement each other.
	First, we suppose that $\mathscr H(x)$ contains $L$.
	Since $L|K$ is Galois, we have that ${\mathscr H(x) \widehat{\otimes}_K L \cong \bigoplus_{i=1}^d \mathscr H(x)}$, so that $\mathscr H(y)=\mathscr H(x)$.
	It follows that $m(y)=\frac{m(x)}{d}=\frac{m(x)}{\gcd(m(x),d)}$.
	At the opposite side of the picture, if $\mathscr H(x) \cap L = K$ then $\mathscr H(y) \cong \frac{\mathscr H(x)[Z]}{(Z^d-\pi)}$.
	Then, the needed result is a consequence of the fact that in $\mathscr H(y)$ we have $\left|\sum_{i=0}^{d-1} \lambda_i \varpi^i \right| = \max\{|\lambda_i||\varpi^i|\}$ for every $d$-uple $(\lambda_i)$ in $\mathscr H(x)^d$. 
	This can be proved using Temkin's theory of graded reduction by observing that the set $\{1, \varpi, \dots, \varpi^{d-1}\}$ is a basis for the extension of graded reductions $\widetilde{\mathscr H(y)}|\widetilde{\mathscr H(x)}$, a fact proven in \cite[2.21]{Ducros2013} (see the totality of section 2 of \emph{loc.\ cit}, and in particular Lemma 2.3, for the relevant definitions in graded algebra).
	The case of a general tamely ramified extension can be deduced by considering first the base change of $X$ from $K$ to $\mathscr H(x) \cap L$, which being tamely ramified is itself Galois over K because the latter contains all prime-to-$p$ roots of unity, and then the base change from $\mathscr H(x) \cap L$ to $L$.
	Finally, if $L|K$ is arbitrary and $p$ does not divide $m(x)$, then we can prove the claim at the end of the statement by splitting the extension $L|K$ as a purely wildly ramified extension of the maximally tame extension of $K$.
	Indeed, in the first case we have $\gcd(m(x),d)=1$ and thus $m(x)$ divides $m(y)$ by the first part of the proposition, so that the inequality $m(y)\leqslant m(x)$ is in fact an equality, while the tame case has been previously treated. 
\end{proof}


\Pa{Genus of a point}{
	With each type 2 point $x$ of a $K$-curve $X$ we can associate a numerical invariant, its \emph{genus}, as follows.
	The set $y_1,\ldots,y_n$ of points above $x$ in the base change $X_\Kalg$ of $X$ to $\Kalg$ is finite by \cite[4.5.1]{Ducros} (or \cite[Lemma 1.1.5]{Conrad1999}).
	The group $\mathrm{Aut}\big(\widehat{K^\mathrm{alg}}|K\big)$ of continuous automorphisms of $\widehat{K^\mathrm{alg}}$ fixing $K$ acts via isomorphisms on $X_\Kalg$, inducing a transitive permutation on the set $\{y_i\}$ and hence isomorphisms $\mathscr H(y_i) \cong \mathscr H(y_j)$ for every $i$ and $j$.
	We define the \emph{genus} $g(x)$ of $x$ to be the genus of the unique smooth projective $\tilde{k}$-curve whose function field is $\widetilde{\mathscr H(y_i)}$.
}

\Pa{Semi-affinoid spaces}{
Let us quickly recall some basic facts about an important class of $K$-analytic spaces, that of semi-affinoid spaces.
We will adopt an ad hoc point of view in order to keep things as simple as possible; a more thorough treatment and further references can be found in \cite[Section 2]{FantiniTurchetti2018}.

Let 
\(
D_{m,n}
 = 
D^m_K\times_K (D^-_K)^{n} 
\)
be the product of the $m$-dimensional $K$-analytic closed unit disc with the $n$-dimensional $K$-analytic open unit disc, that is the subset of the $(n+m)$-dimensional $K$-analytic affine line $\mathbb A^{m+n,\mathrm{an}}_K$ cut out by the inequalities $|Y_i(x)|\leqslant1$ for all $i=1,\ldots,m$ and $|Y_i(x)|<1$ for all $i=m+1,\ldots,m+n$, where the $Y_i$'s are coordinates on $\mathbb A^{m+n,\mathrm{an}}_K$.
Denote by 
\[
\calO^\circ(D_{m,n})=R\{Y_1,\ldots,Y_m\}\lbrack\lbrack Y_{m+1},\ldots,Y_{m+n} \rbrack\rbrack
\]
the algebra of the analytic functions on $D_{m,n}$ that are bounded by $1$ in absolute value.
A \emph{semi-affinoid} $K$-analytic space $U$ is any space of the form
\[
U=U_{m,n,(f_1,\ldots,f_r)}=V(f_1,\ldots,f_r)\subset D_{m,n},
\]
for some integers $m,n\geqslant0$ and analytic functions $f_1,\ldots,f_r\in \calO^\circ(D_{m,n})$, where $V(f_1,\ldots,f_r)$ denotes the zero locus of the functions $f_i$ in $D_{m,n}$.
}

\pa{
Let $U=U_{m,n,(f_1,\ldots,f_r)}$ be a semi-affinoid $K$-analytic space and consider the $R$-algebra 
\[
A=A_{m,n,(f_1,\ldots,f_r)}=\frac{R\{Y_1,\ldots,Y_m\}\lbrack\lbrack Y_{m+1},\ldots,Y_{m+n} \rbrack\rbrack}{(f_1,\ldots,f_r)}.
\]
Any such algebra is usually called a \emph{special $R$-algebra}.
Assume that the algebra $A$ is reduced, so that $U$ is a reduced $K$-analytic space. Then, the integral closure of $A$ in the $K$-algebra $A\otimes_RK$ coincides with the set of analytic functions on $U$ that are bounded by 1 in absolute value (and so in particular it does not depend on $m,n$, and $(f_1,\ldots,f_r)$, but only on the isomorphism class of $U$).
It has a natural $R$-algebra structure and we denote it by $\calO^\circ(U)$.
The $R$-algebra $\calO^\circ(U)$ determines the semi-affinoid space $U$ completely: $\calO^\circ(U)\cong A_{m',n',(g_1,\ldots,g_{r'})}$ is itself a special $R$ algebra and $U\cong U_{m',n',(g_1,\ldots,g_{r'})}$ (see \cite[Lemma 2.2]{FantiniTurchetti2018}).
}

\pa{
The good point of view for studying semi-affinoid spaces is that of formal schemes.
Indeed, a special $R$-algebra $A$ as above is a noetherian adic topological ring, with ideal of definition $J$ generated by the uniformizer of $R$ and by the coordinate functions $Y_{m+1},\ldots,Y_{n+m}$, so that we can consider its formal spectrum $\Spf(A)=\varinjlim_{n}\Spec(A/J^n)$.
The semi-affinoid space $U$ is then the $K$-analytic space associated with $\Spf(A)$, as in \cite[Section 7]{deJong1995} and \cite[Section 1]{Berkovich1996}.
We call the formal scheme $\Spf\big(\calO^\circ(U)\big)$ the \emph{canonical model} of $U$.
}

\Pa{Field of constants}{
\label{def_fieldofconstants}
	Given a connected $K$-analytic curve $X$, we define its \emph{field of constants} $\mathfrak s(X)$ as
	\[
	\mathfrak s(X) = \big\{ f \in \mathcal O_X(X) \,\big|\, P(f)=0 \text{ for some }  P\in K[T]\text{ separable} \big\}.
	\]
	It is a finite separable field extension of $K$ which is contained in the sub-algebra of $\mathcal O_X(X)$ and consists of those functions that are constant on $X$.
	The degree $[\mathfrak s(X): K]$ coincides with the number of connected components of $X \times_K \fraks(X)$, and hence of the base change of $X$ to the algebraic closure of $K$.
	In particular, if $X$ has a $L$-rational point for some finite separable extension $L$ of $K$ then $\fraks(X)$ is contained in $L$.
}

\pa{
	\label{def_Us} 
	\label{definition_space_over_field_of_constants}
	Since the non-archimedean field $\fraks(X)$ is contained in the $K$-algebra $\mathcal O_X(X)$, the $K$-analytic space $X$ has also a natural structure of an $\mathfrak s(X)$-analytic space.
	We denote by $X_\fraks$ this $\fraks(X)$-analytic space.
	Since $X_\fraks$ is geometrically connected, this procedure often gives a convenient way to avoid having to deal with curves that are not geometrically connected.
	Indeed, if $L$ is a Galois extension of $K$ containing $\fraks(X)$, then the base change $X \times_K L$ of $X$ to $L$ is isomorphic to $[\fraks(X):K]$-many copies of $X_\fraks\times_{\fraks(X)}L$.
	If moreover $X$ is a semi-affinoid space, then $\fraks(X)^\circ$ is the integral closure of $R$ in $\mathcal O^\circ(X)$, and $X_\fraks$ is naturally a semi-affinoid space as well simply by seeing $\mathcal O^\circ(X)$ as a special $\fraks(X)^\circ$-algebra.
}

\pa{\label{point_splitting_function}
	With each $x \in X$ we associate the largest subfield $\mathfrak s(x)$ of $\mathcal H(x)$ that is separable over $K$.
	We consider the map 
	\begin{align*}
	s\colon & X \longrightarrow \N \\
	&x \longmapsto [\mathfrak s(x):K],
	\end{align*} 
	which we call the \emph{splitting function} of $X$.
	For every semi-affinoid subspace $U$ of $X$ containing $x$ and every constant function $f\in \mathfrak s(U)$, the evaluation at $x$ produces an element $f(x)\in \mathfrak s(x)$.
	This induces an injection $\mathfrak s(U) \hookrightarrow \mathfrak s(x)$.
	Moreover, the local ring $\mathcal O_{X,x}$ being Henselian, it contains $\fraks(x)$, and so there exists a semi-affinoid neighborhood $U$ of $x$ contained in $X$ such that $\fraks(x) = \fraks(U)$.
	As a consequence, the splitting function $s$ is lower semi-continuous on $X$.
	Observe that from the discussion above it follows that $s(x)$ is the cardinality of the preimage of $x$ in the base change $X_\Kalg$ of $X$ to $\Kalg$.
}

\pa{
	Given a connected open $K$-analytic curve $X$, the number of connected components of $X \setminus \{Y\}$, where $Y$ is a compact subset of $X$, does not depend on $Y$ if $Y$ is big enough.
	This allow us to define the set of \emph{ends} of $X$ as 
	\[
	\Ends(X)= \varprojlim_{Y} \pi_0(X\setminus Y).
	\]
	If $\varepsilon\in \Ends(X)$ is an end of $X$, then it can be represented by a family $(U_n)_{n\in \N}$ of connected open subsets $U_n$ of $X$ such that $U_{n+1}\subset U_{n}$ for every $n$ and $\cap_n \Ends(U_n)=\{\varepsilon\}$.
}

\pa{
	Let $\varepsilon\in \Ends(X)$ be an end of $X$.
	The field of constants $\fraks(U_n)$ of $U_n$ does not depend on the choice of a representative $(U_n)_{n}$ of $\varepsilon$ as above nor on $U_n$ if $n$ is big enough.
	We denote this field by $\fraks(\varepsilon)$ and call it the \emph{field of constants} of $\varepsilon$.
	Its degree $[\fraks(\varepsilon):K]$ over $K$ counts the number of ends of $X_\Kalg$ above $\varepsilon$.
}

\section{Virtual discs and virtual annuli}

In this section we introduce two very important families of semi-affinoid $K$-analytic spaces, namely virtual discs and virtual annuli, give several examples of those and discuss some of their fundamental properties.

\pa{
We call \emph{open disc} (or simply \emph{disc}) over $K$ any $K$-analytic curve isomorphic to
\[
D_K=\big\{x\in\mathbb A^{1,\mathrm{an}}_K\;\big|\;|T(x)|<1\big\}\,,
\]
where $T$ denotes the standard coordinate function on $\A^{1,\mathrm{an}}_K$.
It is the semi-affinoid $K$-analytic space associated with the special $R$-algebra $R[[T]]$.
We call \emph{open annulus} (or simply \emph{annulus}) over $K$ any $K$-analytic curve isomorphic to
\[
A_{n,K}=\big\{x\in\mathbb A^{1,\mathrm{an}}_K\;\big|\;|\pi^n|<|T(x)|<1\big\}
\]
for some $n\in\N$.
It is the semi-affinoid $K$-analytic space associated with the special $R$-algebra $R[[S,T]]/(ST-\pi^n)$.
}

\begin{figure}[h]
	\begin{tikzpicture}[scale=.8]	
	\draw[thin] (0,0)--(0,4.25);
	\draw[thin,dashed] (0,4.25)--(0,5);
	\draw[thin] (0,.75)--(.5,.2);
	\draw[thin] (0,1.5)--(.8,.7);
	\draw[thin] (0,2.75)--(1.25,1.75);
	\draw[thin] (0,3.375)--(1.75,2.75);
	\draw[thin] (0,4.25)--(3,4);
	\draw[thin] (1.5,4.125)--(2.5,4.75);

	\draw[fill] (0,0)circle(1pt);
	\draw[fill] (.5,.2)circle(1pt);
	\draw[fill] (.8,.7)circle(1pt);
	\draw[fill] (1.25,1.75)circle(1pt);
	\draw[fill] (1.75,2.75)circle(1pt);
	\draw[fill] (3,4)circle(1pt);
	\draw[fill] (2.5,4.75)circle(1pt);

	\begin{small}
	\node(a)at(-.6,4.85){$\aan$};
	\end{small}

	\begin{scriptsize}
	\node(a)at(-.2,.05){$0$};
	\node(a)at(.7,.1){$\pi^3$};
	\node(a)at(1,.6){$\pi^2$};
	\node(a)at(1.45,1.65){$\pi$};
	\node(a)at(1.7,2.45){$[\sqrt{\pi}]$};
	\node(a)at(3.15,3.8){$1$};
	\node(a)at(3.05,4.75){$1+\pi^3$};
	\end{scriptsize}

\draw[fill=darkblue, opacity=.2] plot [smooth cycle, tension=1.5] coordinates {(1.45,4) (3.2,4.25) (3.1,3.5)};
\begin{footnotesize}
\node(a)at(5.4,4.15){{\color{darkblue}$\big\{|T(x)- 1|<|\pi^3|\big\}$}};
\node(a)at(4.2,3.6){{\color{darkblue}disc}};
\end{footnotesize}

\draw[fill=red, opacity=.2] plot [smooth cycle, tension=1.5] coordinates {(-.5,4.2) (2.5,2.2) (0,1.5)};
\begin{footnotesize}
\node(a)at(5.15,2.25){{\color{red}$A_{2,K}=\big\{|\pi^2|<|T(x)|<1\big\}$}};
\node(a)at(3.4,1.75){{\color{red}annulus}};
\end{footnotesize}

	\end{tikzpicture}
	\caption{\label{figure:disc_annulus}
	A disc, in blue, and the annulus $A_{2,K}$, in red, seen as subspaces of $\aan$.}
\end{figure}
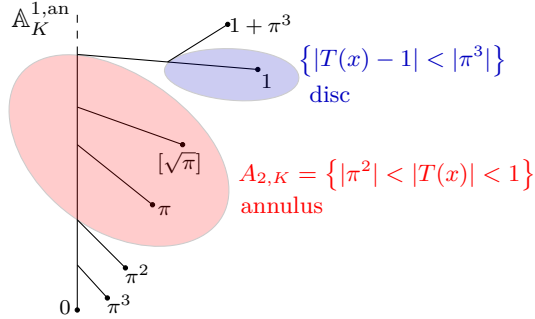

\Pa{Virtual discs and virtual annuli}{
Let $X$ be a connected semi-affinoid curve over $K$.
We say that $X$ is a \emph{virtual open disc} (or simply a \emph{virtual disc}) if there exists a finite separable extension $L$ of $K$ such that $X_L=X \times_K L$ is a disjoint union of open discs over $L$.
We say that $X$ is a \emph{virtual open annulus} (or simply a \emph{virtual annulus}) if there exists a finite separable extension $L$ of $K$ such that $X_L$ is a disjoint union of open annuli over $L$.
In both cases, we say that $X$ \emph{is trivialized} by $L$.
}

\pa{
Let $X$ be a virtual disc (respectively a virtual annulus) over $K$, trivialized by $L$. 
Since discs and annuli are geometrically connected, $\fraks(X)$ embeds in $L$.
The curve $X_\fraks$ being geometrically connected, its base change $X_\fraks \times_{\fraks(X)} L$ to $L$ is then a disc (respectively an annulus) over $L$.
Conversely, any $K$-analytic curve $X$ such that $X_\fraks$ becomes a disc (respectively  an annulus) after a finite separable base field extension $L'$ of $\fraks(X)$ is a virtual disc (respectively a virtual annulus) since $X \times_K L'$ consists of a finite number of copies of $X_\fraks\times_{\fraks(X)}L'$.
}

\pa{
	It follows from the definition that any disc has one end and any annulus has two ends.
	Let $X$ be a virtual disc or a virtual annulus trivialized by a finite Galois extension $L$ of $K$.
	Since $X$ is homeomorphic to $X_\fraks$, by replacing the former with the latter we can assume that $X$ is geometrically connected.
	Then $\Gal(L|K)$ acts on $\Ends(X_L)$ and we have $\Ends(X) \cong {\Ends(X_L)}/{\Gal(L|K)}$.
	Therefore, if $X$ is a virtual disc then $\Ends(X)$ consists of a single point.
	On the other hand, if $X$ is a virtual annulus then $\Ends(X)$ consists of two points if the two elements of $\Ends(X_L)$ are fixed by every element of $\Gal(L|K)$, and of a single point otherwise.
}

\Pa{Terminology}{
	We caution the reader on our terminology here, since it differs from the one of \cite{Ducros}.
	Indeed, in \emph{loc. cit.} virtual annuli are required to have two ends, which is not the case here.
	In turn, this will lead us to a definition of triangulation different than the one of \emph{loc. cit.} (see \ref{point:def_triangulation_terminology}).
}

\Pa{Skeletons}{ \label{skeletons_virtual_annuli}
Discs and annuli are connected, contractible special quasi-polyhedrons, in the sense of \cite[Section 4.1]{Berkovich1990}.
If $X$ is an annulus, then we call \emph{skeleton} of $X$ the real open interval connecting the two ends of $X$, and we denote it by $\Sigma(X)$.
It follows readily from the definition of annulus that $\Sigma(X)$ coincides with the \emph{analytic skeleton} of $X$, which is the set of points of $X$ that have no neighborhood in $X$ isomorphic to a virtual disc, and with the \emph{topological skeleton} of $X$, which is the set of points of $X$ that have no neighborhood in $X$ homeomorphic to a real tree with one end.
Moreover, $X$ admits a strong deformation retraction onto $\Sigma(X)$ which is topologically a proper map (this follows for example from \cite[Proposition 4.1.6]{Berkovich1990}).
Now, let $X$ be a virtual annulus trivialized by an extension $L$ of $K$, denote by $\tau\colon X_L\to X$ the base change morphism, and set $\Sigma(X)=\tau\big(\Sigma(X_L)\big)$, where $\Sigma(X_L)$ is the union of the skeletons of the components of $X_L$.
The action of the Galois group $\Gal(L|K)$ of $L$ over $K$ fixes $\Sigma(X_L)$ and identifies $\Sigma(X)$ with the quotient ${\Sigma(X_L)}/{\Gal(L|K)}$.
To understand this quotient, by replacing the virtual annulus $X$ with $X_\s$ we can assume without loss of generality that $X$ is geometrically connected, so that $\Sigma(X_L)$ is a single line segment.
Then the action of an element $\sigma$ of $\Gal(L|K)$ on $\Sigma(X_L)$, being a continuous automorphism of finite order, is either the identity or a non-trivial involution permuting the ends of the line segment.
In both cases $\Sigma(X)$, which we call again \emph{skeleton} of $X$, is itself a line segment (either open or semi-open) which coincides with the analytic skeleton of $X$.
In the first case $\Sigma(X)$ is also the topological skeleton of $X$, while in the second case the analytic structure of $X$ is necessary to determine it.
It follows again by \cite[Proposition 4.1.6]{Berkovich1990} that $X$ admits a topologically proper strong deformation retraction onto $\Sigma(X)$. 
}

\pa{
Let $X$ be a virtual annulus with two ends, so that the skeleton $\Sigma(X)$ of $X$ is the open line segment connecting the two ends, and let $x$ be a point of $\Sigma(X)$.
Then $\fraks(x)=\fraks(X)$.
To see this, we can assume without loss of generality that $X$ is geometrically connected, since $\fraks(x)$ is also the constant field of $x$ seen as a point of $X_\s$.
Then our claim follows from the fact that $[\s(x):K]$ is the number of preimages of $x$ in the skeleton $\Sigma(X_L)$ of the annulus $X_L$, where $L$ is a finite extension of $K$ trivializing $X$, and we have already seen that in this case the Galois group of $L|K$ fixes $\Sigma(X_L)$ point-wise.
}

\pa{\label{point_break_one-ended_annuli}
	Let $X$ be a virtual annulus with one end, so that the skeleton $\Sigma(X)$ of $X$ is a semi-open line segment connecting the end of $X$ to a point $x$ of $\Sigma(X)$.
	We call the point $x$ the \emph{bending point} of $X$.
	Then it follows from the discussion of \ref{skeletons_virtual_annuli} that the connected component of $X\setminus\{x\}$ that intersects $\Sigma(X)$ nontrivially is a virtual annulus with two ends, while all other connected components are virtual discs.
	If we assume again that $X$ is geometrically connected, then every point $y$ of the skeleton $\Sigma(X)$ of $X$ different from the bending point $x$ has two distinct inverse images in the base change $X_\Kalg$ of $X$ to $\Kalg$ (that is, $s(y)=2$), while $x$ has one (that is, $s(x)=1$), as is depicted in the example of Figure~\ref{figure:virtual_disc_annulus}.
}

\begin{examples}\label{examples:virtual}
	We begin by giving some examples of geometrically connected virtual discs and annuli, chosen among the ones that will play a relevant role in the rest of the paper.
	\begin{enumerate}
		\item \label{example_fractional_disc}
		Consider the subspace
		\[
		X=\big\{x\in\aan \;\big|\; |T(x)|<|\pi|^{\sfrac{1}{d}}\big\}
		\]
		of the analytic affine line $\A^{1,\mathrm{an}}_K$.
		It is the semi-affinoid space associated with the special $R$-algebra $R[[S,T]]/(\pi S- T^d)$.
		This is what we call a \emph{fractional disc} over $K$.
		Observe that $X$ has a $K$-rational point, hence in particular $\fraks(X)\cong K$.
		Its boundary in $\A^{1,\mathrm{an}}_K$ consists of a single point of multiplicity $d$.
		There exists a finite separable field extension $L$ of $K$ such that $L$ contains an element $\varpi$ of absolute value $|\varpi|=|\pi|^{1/d}$ (for example, if $d$ is prime with the residue characteristic of $K$ or if $K$ is perfect we can take for $\varpi$ a $d$-th root of $\pi$), then $X_L$ is the disc 		
		\(
		\big\{x\in\A^{1,\mathrm{an}}_L \;\big|\; |T(x)|<|\varpi|\big\}
		\)
		over $L$, and so $X$ is a virtual disc.
		

		\item \label{example_Ducros2013}
		Any geometrically connected virtual disc over $K$ that is trivialized by a tamely ramified extension of $K$ is of the form described in the previous example.
		This follows from \cite[Th\'eor\`eme 3.5]{Ducros2013}.

		\item \label{example_FantiniTurchetti2018}
		Any geometrically connected virtual annulus $X$ over $K$ with two ends that is trivialized by a tamely ramified extension of $K$ is of the form 
		\[
		X=\big\{x\in\aan \;\big|\; |\pi|^\beta < |T(x)| < |\pi|^\alpha\big\}		
		\]
		for some $\alpha,\beta\in\Q$, as proved in \cite[Theorem 8.1]{FantiniTurchetti2018}.
		This is what we call a \emph{fractional annulus} over $K$.

		\item \label{example_disco_stronzo} 
		Suppose that $K$ is of mixed characteristic $(0,p)$ and consider the subspace $X$ of $\aan$ defined as
		\[
		X=\big\{x\in\aan \;\big|\; |(T^p-\pi)(x)|<|\pi|\big\}. 
		\]
		Then $X$ is the semi-affinoid $K$-analytic curve having special $R$-algebra $R[[U,V]]/(U^p-\pi-\pi V)$.
		Its boundary in $\aan$ consists of the point $x$ on the path from 0 to the Gauss point satisfying $|T(x)|=|\pi|^{\sfrac{1}{p}}$, which has multiplicity $p$.
		After choosing a $p$-th root $\varpi$ of $\pi$ and performing a base change to the wild extension $L=K(\varpi)$ of $K$, the space $X_L$ is isomorphic to the disc
		\[
		X_L=\big\{x\in\A^{1,\mathrm{an}}_{L} \;\big|\; |(T-\varpi)(x)|<|\varpi|\big\}
		\]
		of center $\varpi$ and radius $|\varpi|=|\pi|^{\sfrac{1}{p}}$, since the distance between two distinct $p$-th roots of $\pi$ is smaller than $|\varpi|$.
		However, despite being geometrically connected, the virtual disc $X$ is not itself a disc with a rational radius as in $(i)$, since it has no rational point over $K$.
		More generally, other examples of virtual discs trivialized by a wildly ramified extension of $K$ can be produced by picking an Eisenstein polynomial $P(S)\in R[S]$ of degree $p^n$ and considering the special $R$-algebra $R[[S,T]]/\big(P(S)-\pi T\big)$.
		The associated semi-affinoid space is a virtual disc trivialized by the extension $K[S]/\big(P(S)\big)$.
		Examples of geometrically connected virtual discs over $K$ that are not discs and yet have a $K$-rational point also exist, see for example \cite[3.8.2]{Ducros2013}.
	\end{enumerate}
\end{examples}

\pa{
	Let $X$ be a connected semi-affinoid curve over $K$.
	If $X_\fraks$ is isomorphic to a fractional disc
	\[
	\{x\in \mathbb{A}^{1,\mathrm{an}}_{\fraks(X)} \,\big|\, |T(x)| < \rho\big\}
	\]
	for some $\rho \in \sqrt{|K^\times|}$, then $X$ is a virtual disc which we call \emph{generalized fractional disc}.
	If $X_\fraks$ is isomorphic to a fractional annulus
	\[
	\big\{x\in \mathbb{A}^{1,\mathrm{an}}_{\fraks(X)} \,\big|\, \rho_1<|T(x)| < \rho_2\big\}
	\]
	for some $\rho_1, \rho_2 \in \sqrt{|K^\times|}$, then $X$ is a virtual annulus which we call \emph{generalized fractional annulus}.
}

\begin{examples}
Here are some examples of generalized fractional discs and annuli.
	\begin{enumerate}
		\item \label{example:virtual_several_fractional_discs}
		Let $m$ be a positive integer prime with the residue characteristic of $K$.
		Similarly as in Example~\ref{example_disco_stronzo}, the subspace $X$ of $\aan$ defined as
		\[
		X=\big\{x\in\aan \;\big|\; |(T^m-\pi)(x)|<|\pi|\big\}. 
		\]
		is a semi-affinoid $K$-analytic curve associated with the special algebra $R[[U,V]]/(U^m-\pi-\pi V)$, whose boundary in $\aan$ consists of a point of multiplicity $m$.
		However, unlike in the case of Example~\ref{example_disco_stronzo}, if $\varpi$ and $\varpi'$ are two distinct $m$-th roots of $\pi$ in $\Kalg$, then $|\varpi-\varpi'|=|\varpi|$, and hence the base change $X_L$ of $X$ to the tamely ramified extension $L=K(\varpi)$ of $K$ is
		\[
		X_L=\bigcup_{\zeta^m=1}\big\{x\in\A^{1,\mathrm{an}}_{L} \;\big|\; |(T-\zeta\varpi)(x)|<|\varpi|\big\},
		\]
		which is a disjoint union of $m$ discs over $L$.
		In particular, we deduce that the field of constants $\s(X)$ of $X$ is $L$ and that $X$ is a generalized fractional disc.
		This can also be seen by observing that there exists an isomorphism of special $R$-algebras 
		\[R[[U,V]]/(U^m-\pi-\pi V) \to R[[S,T]]/(S^m- \pi),\] 
		defined by sending $U$ to $S(1+T)^{1/m}$ and $V$ to $T$.
		These two algebras are not isomorphic in the situation of Example~\ref{example_disco_stronzo}, as can be expected since the one on the right contains the constant function $S$.
		A picture of such a virtual disc for $m=2$ is given on the left of Figure~\ref{figure:virtual_disc_annulus}
		
		\item 
		Let $\alpha$ and $\beta$ be two rational numbers with $\beta>\alpha\geqslant 1$ and let $d$ be a positive integer prime with the residue characteristic of $K$.
		Then the subspace of $\aan$ of the form
		\[
		X=\big\{x\in\aan \;\big|\; |\pi|^\beta < |(T^d-\pi)(x)| < |\pi|^\alpha\big\}		
		\]
		is a generalized fractional annulus.
		Its field of constants is $\s(X)=K(\pi^{\sfrac{1}{d}})$ and its ends have multiplicities $da$ and $db$, where $a$ and $b$ are the smallest positive integers such that $a\alpha$ and $b\beta$ are integers.
		Indeed, over $\s(X)$ we have 
		\[
		X_\s \cong \big\{x\in\A^{1,\mathrm{an}}_{\s(X)} \;\big|\; |\pi|^{\sfrac{\beta}{d}} < |T(x)| < |\pi|^{\sfrac{\alpha}{d}}\big\}.
		\]

		\item \label{examples_tame_forms_generalized}
		It follows easily from Examples~\ref{example_Ducros2013} and \ref{example_FantiniTurchetti2018} that any virtual disc (respectively virtual annulus with two ends) over $K$ trivialized by a tamely ramified extension of $K$ is a generalized fractional disc (respectively a generalized fractional annulus).
	\end{enumerate}
\end{examples}

\begin{example}\label{example_virtual_annulus_oneend}
Let $n>1$ be an integer, assume that the residue characteristic of $K$ is different from 2, and consider the subspace of $\pan$ defined by
\[
X = \{x\in\pan \;\big|\; |(T^2-\pi)(x)|>|\pi|^n\}
\]
and depicted at the bottom-right of Figure~\ref{figure:virtual_disc_annulus}.
The complement of $X$ in $\pan$ is a closed version of a generalized fractional disc analogous to the one of Example \ref{example:virtual_several_fractional_discs} (with $m=2$ and the radius raised to the power $n$).
It follows that the base change $X_L$ of $X$ to the quadratic extension $L=K(\sqrt{\pi})$ of $K$ is the complement in $\pan$ of two closed discs, which is an annulus.
Since $X$ has only one end, as its boundary in $\pan$ consists of a single point, this shows that $X$ is a virtual annulus with one end, trivialized by $L$.		
In fact, we have shown in \cite[Theorem 8.3]{FantiniTurchetti2018} that, whenever the residue characteristic of $K$ is different from two, any virtual annulus with one end that is trivialized by a quadratic extension of $K$ is isomorphic to the $K$-analytic space $X$ (for some choice of $n>1$).
The explicit equations appearing in \emph{loc. cit.} also show that $X$ is the semi-affinoid $K$-analytic space associated with the special $R$-algebra
		\[
		R[[S,T,U]]/(S^2-U+\pi^n, \pi U-T^2).
		\]
\end{example}

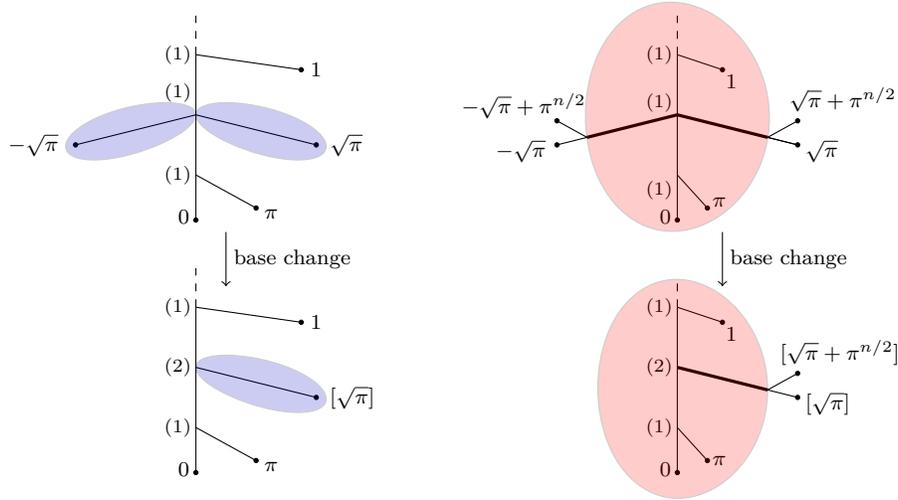
\begin{figure}[h]
	\begin{tikzpicture}[scale=.8]
	\draw[thin] (0,0)--(0,2.75);
	\draw[thin,dashed] (0,2.75)--(0,3.4);
	\draw[thin] (0,.75)--(1,.2);
	\draw[thin] (0,1.75)--(2,1.25);
	\draw[thin] (0,2.75)--(1.75,2.5);

	\draw[fill] (0,0)circle(1pt);
	\draw[fill] (1,.2)circle(1pt);
	\draw[fill] (2,1.25)circle(1pt);
	\draw[fill] (1.75,2.5)circle(1pt);

	\begin{scriptsize}
	\node(a)at(-.2,.05){$0$};
	\node(a)at(1.25,.1){$\pi$};
	\node(a)at(2.6,1.25){$[\sqrt{\pi}]$};
	\node(a)at(2,2.5){$1$};
	\end{scriptsize}
	
	\begin{tiny}
	\node(a)at(-.3,.75){$(1)$};
	\node(a)at(-.3,1.75){$(2)$};
	\node(a)at(-.3,2.75){$(1)$};
	\end{tiny}

	\draw[fill=darkblue, opacity=.2] plot [smooth cycle, tension=1.5] coordinates {(0,1.75) (1.65,1.7) (1.65,1)};

	\begin{scope}[yshift=4.2cm]
	
	\draw[thin,->] (.5,-.2)--(.5,-1.1);
	
	\begin{scriptsize}
	\node(a)at(1.6,-.65){base change};
	\end{scriptsize}

	\draw[thin] (0,0)--(0,2.75);
	\draw[thin,dashed] (0,2.75)--(0,3.4);
	\draw[thin] (0,.75)--(1,.2);
	\draw[thin] (0,2.75)--(1.75,2.5);
	\draw[thin] (0,1.75)--(2,1.25);
	\draw[thin] (0,1.75)--(-2,1.25);

	\draw[fill] (0,0)circle(1pt);
	\draw[fill] (1,.2)circle(1pt);
	\draw[fill] (2,1.25)circle(1pt);
	\draw[fill] (-2,1.25)circle(1pt);
	\draw[fill] (1.75,2.5)circle(1pt);

	\begin{scriptsize}
	\node(a)at(-.2,.05){$0$};
	\node(a)at(1.25,.1){$\pi$};
	\node(a)at(-2.7,1.25){$-\sqrt{\pi}$};
	\node(a)at(2.5,1.25){$\sqrt{\pi}$};
	\node(a)at(2,2.5){$1$};
	\end{scriptsize}
	
	\begin{tiny}
	\node(a)at(-.3,.75){$(1)$};
	\node(a)at(-.3,2.12){$(1)$};
	\node(a)at(-.3,2.75){$(1)$};
	\end{tiny}

	\draw[fill=darkblue, opacity=.2] plot [smooth cycle, tension=1.5] coordinates {(0,1.75) (1.65,1.7) (1.65,1)};
	
	\draw[fill=darkblue, opacity=.2] plot [smooth cycle, tension=1.5] coordinates {(0,1.75) (-1.65,1.7) (-1.65,1)};

	\end{scope}

	\begin{scope}[xshift=8cm]

	\draw[thin] (0,0)--(0,2.75);
	\draw[thin,dashed] (0,2.75)--(0,3.2);
	\draw[thin] (0,.75)--(.5,.2);
	\draw[thin] (0,1.75)--(2,1.25);
	\draw[thin] (1.5,1.375)--(2,1.65);
	\draw[thin] (0,2.75)--(.75,2.5);
	\draw[very thick] (0,1.75)--(1.5,1.375);

	\draw[fill] (0,0)circle(1pt);
	\draw[fill] (.5,.2)circle(1pt);
	\draw[fill] (2,1.25)circle(1pt);
	\draw[fill] (2,1.65)circle(1pt);
	\draw[fill] (.75,2.5)circle(1pt);

	\begin{scriptsize}
	\node(a)at(-.2,.05){$0$};
	\node(a)at(.7,.2){$\pi$};
	\node(a)at(2.5,1.15){$[\sqrt{\pi}]$};
	\node(a)at(2.7,2){$[\sqrt{\pi}+\pi^{n/2}]$};
	\node(a)at(.9,2.3){$1$};
	\end{scriptsize}
	
	\begin{tiny}
	\node(a)at(-.3,.75){$(1)$};
	\node(a)at(-.3,1.75){$(2)$};
	\node(a)at(-.3,2.75){$(1)$};
	\end{tiny}

	\draw[fill=red, opacity=.2] plot [smooth cycle, tension=1.5] coordinates {(1.5,1.375) (-.65,3) (-.65,-.2)};
	
	\end{scope}
	
	\begin{scope}[xshift=8cm,yshift=4.2cm]
	
	\draw[thin,->] (.75,-.2)--(.75,-1.1);
	
	\begin{scriptsize}
	\node(a)at(1.85,-.65){base change};
	\end{scriptsize}

	\draw[thin] (0,0)--(0,2.75);
	\draw[thin,dashed] (0,2.75)--(0,3.4);
	\draw[thin] (0,.75)--(.5,.2);

	\draw[thin] (0,1.75)--(2,1.25);
	\draw[thin] (1.5,1.375)--(2,1.65);
	\draw[thin] (0,2.75)--(.75,2.5);
	\draw[very thick] (0,1.75)--(1.5,1.375);
	
	\draw[thin] (0,1.75)--(-2,1.25);
	\draw[thin] (0,1.75)--(2,1.25);
	\draw[thin] (-1.5,1.375)--(-2,1.65);
	\draw[very thick] (0,1.75)--(-1.5,1.375);
	%
	%

	\draw[fill] (0,0)circle(1pt);
	\draw[fill] (.5,.2)circle(1pt);
	\draw[fill] (2,1.25)circle(1pt);
	\draw[fill] (2,1.65)circle(1pt);
	\draw[fill] (-2,1.25)circle(1pt);
	\draw[fill] (-2,1.65)circle(1pt);
	\draw[fill] (.75,2.5)circle(1pt);

	\begin{scriptsize}
	\node(a)at(-.2,.05){$0$};
	\node(a)at(0.7,.3){$\pi$};
	\node(a)at(2.4,1.15){$\sqrt{\pi}$};
	\node(a)at(2.75,2){$\sqrt{\pi}+\pi^{n/2}$};
	\node(a)at(-2.6,1.15){$-\sqrt{\pi}$};
	\node(a)at(-2.55,1.9){$-\sqrt{\pi}+\pi^{n/2}$};
	\node(a)at(.9,2.3){$1$};
	\end{scriptsize}
	
	\begin{tiny}
	\node(a)at(-.3,.5){$(1)$};
	\node(a)at(-.3,1.95){$(1)$};
	\node(a)at(-.3,2.75){$(1)$};
	\end{tiny}

	\draw[fill=red, opacity=.2] plot [smooth cycle, tension=1.5] coordinates {(1.53,1.7) (-.8,3.4) (-.8,.05)};
	
	\end{scope}

	\end{tikzpicture} 
	\caption{
	\label{figure:virtual_disc_annulus}	
	The bottom half of the figure depicts the virtual disc of Example~\ref{example:virtual_several_fractional_discs} with $m=2$ (on the left) and the virtual annulus with one end of Example~\ref{example_virtual_annulus_oneend} (on the right). 
	They are both trivialized by the base change to $K(\sqrt\pi)$, as shown in the top half of the figure.
	The numbers between parentheses denote the multiplicities of the nearby type 2 points, and the skeletons of the virtual annulus and of its trivialization are depicted in bold.}
\end{figure}

\pa{\label{point_tame_one-ended_annuli}
	More generally, let $X$ be a virtual annulus with one end, denote by $x$ the bending point of $X$, and suppose that $X$ is trivialized by a tamely ramified extension $L$ of $K$.
	Then we claim that the multiplicity of $m(x)$ divides the degree $[L:K]$ of $L$ over $K$, which has to be even by the existence of an element in $\Gal(L|K)$ of order 2.
	Then, thanks to Proposition~\ref{proposition_multiplicity_basechange} and in the terminology of \cite{FantiniTurchetti2018}, our claim is equivalent to the fact that the connected components of $X_L$ are annuli \emph{of even modulus}.
	If $L$ is a quadratic extension of $K$, our claim is contained in \cite[Theorem~8.3]{FantiniTurchetti2018}, and corresponds to the case shown on the bottom-right part of Figure~\ref{figure:virtual_disc_annulus}.
	In the general case, up to replacing $K$ with $\fraks(X)$ we can assume that $X$ is geometrically connected.
	Let us first show that $X$ can be embedded in $\pan$.
	The connected component $Y$ of $X\setminus\{x\}$ that intersects $\Sigma(X)$ nontrivially is a virtual annulus with two ends that over $L$ becomes a disjoint union of two fractional annuli.
	In particular, by \cite[Theorem~8.1]{FantiniTurchetti2018}, and as already discussed in~\ref{example_FantiniTurchetti2018}, $Y_\s$ is a fractional annulus over the field of constants $\fraks(Y)\cong K[T]/(T^2-\pi)$ of $Y$.
	It follows that $Y$ is a generalized fractional annulus, that is there exist constants $\alpha, \beta \in \Q$ such that $Y$ is isomorphic to the subspace
	\[
	Y' = \big\{y\in\aan \;\big|\; |\pi|^\beta < |(T^2-\pi)(y)| < |\pi|^\alpha\big\}
	\]
	of $\aan$.
	Set
	\[
	X'_\alpha = \big\{y\in\aan \;\big|\; |(T^2-\pi)(y)| < |\pi|^\alpha\big\}
	\]
	and
	\[
	X'_\beta = \big\{y\in\pan \;\big|\; |(T^2-\pi)(y)| > |\pi|^\beta\big\}.
	\]
	Both $X'_\alpha$ and $X'_\beta$ contain, adjacent to their unique end, the fractional annulus $Y'$.
	Now, if when approaching the bending point $x$ the function $|T^2-\pi|$ on $Y$ tends to $|\pi|^\alpha$, then $X$ and $X'_\beta$ can be glued along $Y \cong Y'$.
	On the other hand, if that function tends to $|\pi|^\beta$, then it is $X$ and $X'_\alpha$ that can be glued along $Y \cong Y'$.
	In both cases, we obtain a proper curve $X''$ of genus 0 that contains $X$ as an open subspace. 
	By \cite[Th\'eor\`eme 3.7.2]{Ducros} $X''$ is the analytification of an algebraic projective curve of genus 0 over $K$, that is, a conic.
	Since the residue field of $K$ is algebraically closed, there is a rational point on the special fiber of a suitable model of $X''$ that, thanks to Hensel's lemma, can be lifted to a $K$-rational point in $X''$.
	As a result, $X''\cong \pan$, and so $X \subset X''$ can be embedded in $\pan$.
	Since $Y$ is trivialized by a tamely ramified extension of $K$, up to an automorphism of $\pan$ we can assume that such an embedding identifies $Y\subset X$ with $Y'\subset \pan$.
	It follows that $X$ is identified either with $X'_\beta$ or with $X'_\alpha$.
	Since $X$ is geometrically connected, the latter is not possible, and therefore $X \cong X'_{\beta}$.
	In particular, it is now easy to see that the bending point $x$ of $X$ has multiplicity 2 (it is the same of the one in Figure~\ref{figure:virtual_disc_annulus}) and hence divides the degree $[L:K]$.
}

In the next sections we will often make use of the following three results in order to prove that some semi-affinoid curve is a virtual disc or a virtual annulus.
The first two are reformulations of results proved by Ducros in \cite{Ducros}, while the third is a consequence of the second one.

\Pa{Fusion Lemmas}{\label{lemmas_fusion}
	Let $X$ be a connected semi-affinoid open $K$-curve, and let $x$ be a point of $X$ of genus zero such that $X\setminus\{x\}$ is the disjoint union of two subspaces $X'$ and $X''$ and of virtual discs.
	Then:
	\begin{enumerate}
		\item\label{lemma_fusion_disc} if $X'$ is a virtual disc and $X''$ is a virtual annulus with two ends such that there exists a virtual annulus with two ends $X''' \subset X$ containing both $X''$ and $x$, then $X$ is a virtual disc, trivialized by any extension that trivializes $X'''$;

		\item\label{lemma_fusion_annulus} if $X'$ and $X''$ are virtual annuli with two ends and $\fraks(X')=\fraks(X'')=\fraks(x)$, then $X$ is itself a virtual annulus with two ends, trivialized by any extension of $K$ that trivializes both $X'$ and $X''$;

		\item\label{lemma_fusion_annulus_one_end} if $X'$ is a virtual annulus with one end and bending point $y$ and $X''$ is a virtual annulus with two ends such that $\fraks(X'\setminus\{y\})=\fraks(X'')=\fraks(x)$, then $X$ is itself a virtual annulus with one end, trivialized by any extension of $K$ that trivializes both $X'$ and $X''$.
	\end{enumerate}
}

\begin{proof}
	Part $(i)$ is \cite[Lemme 5.1.1]{Ducros} applied to $X'$ and $X'''$.
	Part $(ii)$ in the case $\s(X')=\s(X'')=K$ is an immediate consequence of \cite[Lemme 5.1.2]{Ducros}.
	We deduce the general case from the case above applied to $X_\fraks$, since under our hypotheses we have $\fraks(X)=\fraks(x)$.
	To see this equality, consider the base change $X_\Kalg$ of $X$ to $\Kalg$; then the inverse image of the skeleton $\Sigma(X)=\Sigma(X')\cup\{x\}\cup\Sigma(X'')$, which is a line segment, is a disjoint union of $s(x)$-many line segments, and his complement is a disjoint union of discs.
	Therefore, $X_\Kalg$ has itself $s(x)$-many connected components, and therefore $[\fraks(X):K]=s(x)$.
	To prove part $(iii)$, we can again suppose that $X'$ is geometrically connected by working over $\fraks(X')$, which is a subfield of $\fraks(X'')$.
	Then, if $L$ is an extension of $K$ that trivializes both $X'$ and $X''$ and $\tau\colon X_L \to X$ denotes the base change morphism, the base change $X''_L$ consists of two disjoint annuli, each connected to the annulus $X'_L$ by either one of the two points in $\tau^{-1}(x)$.
	By applying twice the second part we deduce that $X_L$ us an annulus.
\end{proof}

\begin{proposition}\label{proposition_Ducros-criterion}
	Let $C$ be the analytification of a smooth, projective, and connected algebraic $K$-curve and let $X$ be an open subspace of $C$ whose topological boundary consists of type 2 points.
	Then $X$ is a virtual disc (respectively a virtual annulus) if and only if the following conditions are met:
	\begin{enumerate}
		\item $X$ does not contain type 2 points of positive genus;
		\item every connected component of $X \times_K \Kalg$ is contractible;
		\item every connected component of $X \times_K \Kalg$ has precisely one (respectively two) ends.
	\end{enumerate}
\end{proposition}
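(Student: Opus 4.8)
The plan is to reduce the statement to the case of a geometrically connected curve and then invoke the standard characterization of discs and annuli among Berkovich curves in terms of their topology and the structure of their base change to $\Kalg$. First I would observe that the three conditions (i)--(iii) are all insensitive to replacing $X$ by $X_\fraks$: the genus of a point, contractibility of the components of $X_\Kalg$, and the number of ends of those components are all computed after base change to $\Kalg$, and $X_\Kalg$ is canonically identified with $(X_\fraks)_\Kalg$ (up to relabeling the $[\fraks(X):K]$ copies). Likewise, by the discussion in the section on virtual discs and annuli, $X$ is a virtual disc (resp.\ virtual annulus) if and only if $X_\fraks$ is; so we may and do assume $X$ is geometrically connected, i.e.\ $\fraks(X)=K$ and $X_\Kalg=X_{\widehat{K^{\mathrm{alg}}}}$ is connected.

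Next I would prove the implication "$\Leftarrow$" by working over $\Kalg$. Assume (i)--(iii). Since the topological boundary of $X$ in $C$ consists of type~$2$ points and $C$ is the analytification of a smooth projective curve, the base change $X_\Kalg$ is an open subspace of the analytification of a smooth projective curve over $\Kalg$ whose boundary again consists of type~$2$ points; it is connected, contractible by (ii), contains no type~$2$ point of positive genus by (i), and has exactly one (resp.\ two) ends by (iii). Over the algebraically closed field $\Kalg$ one can then apply the classical semi-stable structure theory of analytic curves (Berkovich, \cite{BoschLuetkebohmert1985}, or \cite[Chapitre~5]{Ducros}): a connected contractible open curve with type~$2$ boundary points, no point of positive genus, and a single end is a disc, while the same hypotheses with two ends force it to be an annulus. (Concretely: contractibility plus one/two ends pins down the homotopy type and the skeleton; the vanishing of all point-genera and the bound on the first Betti number then rule out any component of positive genus in the reductions of the semi-stable model, so each building block is a disc, and they glue along the skeleton to a disc, resp.\ an annulus.) Hence $X_\Kalg$, and a fortiori $X_L$ for a suitable finite separable $L|K$, is a disc (resp.\ an annulus), so $X$ is a virtual disc (resp.\ virtual annulus).

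For the implication "$\Rightarrow$", suppose $X$ is a virtual disc or virtual annulus, trivialized by a finite separable extension $L|K$. Then $X_L$ is a disjoint union of discs or of annuli over $L$, hence so is $X_\Kalg=(X_L)_\Kalg$: every connected component of $X_\Kalg$ is a disc or an annulus over $\Kalg$. Discs and annuli over $\Kalg$ are contractible, which gives (ii); they contain no type~$2$ point of positive genus (their skeleta retract onto points or intervals and the associated residue curves are $\mathbb{P}^1$), which gives (i); and a disc has exactly one end while an annulus has exactly two, which gives (iii) — with the matching between "disc/one end" and "annulus/two ends" as claimed.

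**Main obstacle.** The only genuinely nontrivial input is the classical structure result invoked in the "$\Leftarrow$" direction — that over an algebraically closed non-archimedean field a connected contractible open curve with type~$2$ boundary, no positive-genus points, and one (resp.\ two) ends is a disc (resp.\ an annulus). This is where one must be careful that "contractible $+$ correct number of ends $+$ no higher genus" really does exclude all pathologies (e.g.\ infinitely branching trees, or discs with punctures), and the cleanest route is to cite the relevant statements from \cite{Ducros} rather than reprove the semi-stable reduction machinery; everything else is bookkeeping about base change and the field of constants.
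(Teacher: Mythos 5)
Your proposal is correct and follows essentially the same route as the paper, whose entire proof is to apply \cite[Proposition 5.1.18]{Ducros} to every connected component of $X\times_K\Kalg$ — the classical structure result you single out as the main obstacle is exactly that citation. The one point to watch is your ``a fortiori'' descent from $\Kalg$ to a finite separable extension $L$ (an isomorphism over $\Kalg$ does not automatically live over a finite subextension); that descent is part of what the cited proposition packages, so invoking it directly, as the paper does, is indeed the cleanest route.
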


\begin{proof}
	This follows immediately from {{\cite[Proposition 5.1.18]{Ducros}}} applied to every connected component of $X \times_K \Kalg$.
\end{proof}

\section{Structure of analytic curves and triangulations}

In this section we move to the study of the models of $C$ over the valuation ring $R$ of $K$.
We focus on the relation between models of $C$ and finite non-empty subsets of type 2 points of $C$ and begin the study of semi-stability.

\Pa{Models}{
Let $C$ be the analytification of a smooth, projective, and connected algebraic $K$-curve $\frakC$.
We call \emph{$R$-model} (or simply \emph{model}) of $C$ any flat, proper scheme $\calC$ over $R$ equipped with an isomorphism from its generic fiber to $\frakC$.
We denote by $\calC_k$ the special fiber $\calC \times_R k$ of $\C$.
If $\C$ and $\C'$ are two models of $C$, a morphism of $R$-schemes $f\colon \C'\to\C$ is said to be a \emph{morphism of models} of $C$ if it is compatible with the given identifications of the generic fibers with $\frakC$.
\\
A \emph{vertex set} of $C$ is a finite and nonempty set $V$ of points of type 2 of $C$.
}

\pa{\label{point_formal_fibers}
If $\calC$ is a model of $C$, the reduction modulo $\pi$ induces a natural surjective and anti-continuous map
\[
\Sp_\C \colon C \longrightarrow \C_k
\]
(that is, the inverse image of an open subset of $\C_k$ is closed in $C$), called the \emph{specialization map} (or \emph{reduction map}) associated with $\C$.
If $\C$ is normal and $\eta$ is the generic point of an irreducible component of $\C_k$, then $\Sp_\C^{-1}(\eta)$ consists of a single type 2 point of $C$.
If $P$ is a closed point of $\C_k$, then $\Sp_\C^{-1}(P)$ is a semi-affinoid space whose associated special $R$-algebra is the completion $\widehat{\calO_{\C,P}}$ of the local ring of the model $\C$ at $P$.
We call the open subset $\Sp_\C^{-1}(P)$ of $C$ the \emph{formal fiber} of $P$ in $C$.
These notions, as well as those surrounding the content of the following proposition, were first investigated in \cite{BoschLuetkebohmert1985} in the setting of rigid analytic geometry and are at the heart of the description of the structure of non-archimedean $K$-analytic curves.
For a more modern formulation we refer to \cite[Section 4]{Berkovich1990} and \cite{Ducros}, or to \cite{BakerPayneRabinoff2013} and \cite{Temkin2015} for the analogous result in the (simpler) case of an algebraically closed base field.
}

\begin{theorem}\label{theorem_models-vertex-sets}
	The correspondence 
	\[
	\C \longmapsto V_\C = \big\{\Sp_\C^{-1}(\eta)\,\big|\,\eta\mbox{ generic point of a component of }\C_k\big\},
	\]
	where $\Sp_{\calC} \colon C \to \calC_k$ is the specialization map, induces a bijection between the partially ordered set of isomorphism classes of normal (algebraic) $R$-models $\calC$ of $C$, ordered by morphisms of models, and the partially ordered set of vertex sets of $C$, ordered by inclusion.
	Moreover, $\Sp_{\calC}$ induces a bijection 
	\[
	\big\{\textrm{closed points of }\calC_k\big\} \stackrel{\sim}{\longrightarrow} \pi_0(C\setminus V_\C)
	\]
	via the inverse image $P \mapsto \Sp_{\calC}^{-1}(P)$.
\end{theorem}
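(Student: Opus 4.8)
The plan is to build the correspondence in both directions and check they are mutually inverse, then verify that both are order-reversing... no, order-preserving, and finally establish the bijection on closed points. First I would construct the map $\C \mapsto V_\C$: given a normal model $\C$, the special fiber $\C_k$ has finitely many irreducible components, each with a generic point $\eta$; as recalled in \ref{point_formal_fibers}, normality of $\C$ forces $\Sp_\C^{-1}(\eta)$ to be a single type~2 point of $C$ (the point is Zariski-local on $\C$, so one reduces to the affine case $\Spec A$ with $A$ normal and notes that $\Sp_\C^{-1}(\eta)$ is the Shilov boundary of the affinoid domain cut out by the component, which consists of one point because $A/\pi$ is reduced at $\eta$). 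Distinct components give distinct type~2 points since the specialization map is well-defined, so $V_\C$ is a genuine vertex set. For the reverse direction, given a vertex set $V=\{x_1,\dots,x_r\}$, I would invoke the structure theory of \cite{BoschLuetkebohmert1985} / \cite[Section~4]{Berkovich1990}: each $x_i$ has a connected affinoid neighborhood whose canonical reduction is an integral affine $k$-curve, these glue along the (finitely many) annuli/affinoid pieces of the complement, and one takes $\calC$ to be the result of gluing the formal spectra $\Spf\widehat{\calO^\circ(U_i)}$ together with the formal fibers of $C\setminus V$; algebraizing the resulting proper formal $R$-scheme (by Grothendieck existence, using properness of $\frakC$) yields the normal model $\calC_V$. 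The key properties to record are that $\calC_V$ is normal (each $\widehat{\calO_{\C,P}}$ is a normal special $R$-algebra by construction, since the formal fibers of $C\setminus V$ are discs, annuli, and semi-affinoid pieces with normal associated algebras) and that its special fiber has exactly $r$ components, one for each $x_i$.

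Next I would check that the two constructions are inverse to each other. For $V_{\calC_V}=V$: by construction the generic points of the components of $(\calC_V)_k$ specialize exactly to the $x_i$. For $\calC_{V_\C}\cong\calC$: this is the heart of the matter and I expect it to be the main obstacle. One must show that a normal model is determined by (and can be reconstructed from) the type~2 points corresponding to its special-fiber components. The argument is that the formal fibers $\Sp_\C^{-1}(P)$ of the closed points $P\in\C_k$ are precisely the connected components of $C\setminus V_\C$ — because $C\setminus V_\C$ is the preimage under $\Sp_\C$ of $\C_k$ minus finitely many points (the generic points), and the specialization map induces a bijection between closed points of $\C_k$ and connected components of this preimage; this uses anti-continuity of $\Sp_\C$ together with the fact (from the curve structure theory) that each formal fiber is connected. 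Then $\calC$ is recovered as the gluing of the canonical models of affinoid neighborhoods of the $x_i$ with these formal fibers, which is exactly $\calC_{V_\C}$; uniqueness follows because a normal model is the relative normalization of $\frakC$ in any model dominating it, and the combinatorial data pins down the gluing. Here I would lean on the cited references for the non-trivial input that affinoid/formal-fiber decompositions of curves behave well, rather than reproving it.

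Then I would verify monotonicity: if $\calC'\to\calC$ is a morphism of models then every component of $\C_k$ is dominated by some component of $\calC'_k$ (by properness and the valuative criterion, the generic points match up), whence $V_\C\subseteq V_{\calC'}$; conversely if $V\subseteq V'$ then the identity on generic fibers extends to a morphism $\calC_{V'}\to\calC_V$ because $\calC_{V'}$ is obtained by further blowing up/subdividing the formal fibers of $\calC_V$ at the extra points of $V'\setminus V$, and this is compatible with the $R$-structures. This shows the bijection is an isomorphism of partially ordered sets. Finally, the last assertion is essentially already proven along the way: the map $P\mapsto \Sp_\C^{-1}(P)$ from closed points of $\C_k$ to $\pi_0(C\setminus V_\C)$ is a bijection because $C\setminus V_\C=\Sp_\C^{-1}\bigl(\C_k\setminus\{\text{generic points}\}\bigr)$, each formal fiber $\Sp_\C^{-1}(P)$ is connected and open (its associated special algebra $\widehat{\calO_{\C,P}}$ is local, hence has connected formal spectrum), and distinct closed points give disjoint formal fibers whose union is all of $C\setminus V_\C$ by surjectivity of $\Sp_\C$. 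I would close by remarking that the main technical weight sits in the reconstruction $\calC_{V_\C}\cong\calC$ and the connectedness of formal fibers, both of which are drawn from \cite{BoschLuetkebohmert1985} and \cite{Ducros}.
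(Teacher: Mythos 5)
Your outline is essentially sound, but it is worth pointing out that the paper does not actually prove this theorem: everything except the surjectivity of $\C\mapsto V_\C$ is attributed to \cite{BoschLuetkebohmert1985}, and surjectivity is delegated to \cite[Th\'eor\`emes 6.3.15 et 3.7.6]{Ducros}. What you wrote is, in effect, an unpacking of those citations --- the gluing of canonical formal models of affinoid neighborhoods of the $x_i$ with the formal fibers of the complement is precisely Ducros's construction, and the identification of $\pi_0(C\setminus V_\C)$ with the closed points of $\C_k$ via connectedness of formal fibers (which reduces to the absence of nontrivial idempotents in the local normal ring $\widehat{\calO_{\C,P}}$, cf.\ \ref{point_formal_fibers}) is the Bosch--L\"utkebohmert input. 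The paper also records an alternative, arguably lighter, route to surjectivity that you do not mention: start from \emph{any} normal model whose vertex set contains the given $V$ (such a model exists, e.g.\ a regular one) and contract the excess components while preserving algebraicity and normality; this sidesteps the gluing-and-algebraization step entirely. Two places where your argument is looser than it should be: first, ``Grothendieck existence'' by itself does not algebraize a proper formal $R$-curve --- one needs the existence theorem for formal schemes, which requires an ample line bundle on the special fiber (this is exactly what the appeal to \cite[Th\'eor\`eme 3.7.6]{Ducros}, or to projectivity of proper relative curves over $R$, supplies), so you should cite that rather than the coherent-sheaf version. Second, in the injectivity step the phrase ``a normal model is the relative normalization of $\frakC$ in any model dominating it'' is backwards; the clean argument is to take a normal model $\C_3$ dominating both candidates $\C_1,\C_2$ with $V_{\C_1}=V_{\C_2}$, observe that each $f_i\colon\C_3\to\C_i$ contracts exactly the components indexed by $V_{\C_3}\setminus V_{\C_i}$, and use $(f_i)_*\calO_{\C_3}=\calO_{\C_i}$ (normality plus Zariski's main theorem) to conclude $\C_1\cong\C_2$. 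With these two repairs your proposal is a correct and complete account of what the paper leaves to the references.
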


The surjectivity of the correspondence, which is the only part of the statement which was not already implicit in the work of \cite{BoschLuetkebohmert1985}, can be deduced directly from \cite[Th\'eor\`eme 6.3.15 and Th\'eor\`eme 3.7.6]{Ducros}.
It can also be obtained starting from any model whose associated vertex set contains a given one, and proving that the additional components of the special fiber can be contracted while preserving algebraicity and normality of the model.

\Pa{Semi-stable models}{\label{point_semistable_models}
Recall that a model $\C$ of $C$ over $R$ is said to be a \emph{semi-stable model} of $C$ if its special fiber $\calC_k$ is reduced and has at worst double points as singularities.
It follows from the description of formal fibers of \ref{point_formal_fibers} that a model $\C$ of $C$ is a semi-stable model of $C$ if and only if the connected components of $C\setminus V_\C$ are all discs and annuli (see \cite[Propositions 2.2 and 2.3]{BoschLuetkebohmert1985}).
Indeed, a normal model of $C$ is semi-stable if and only if, for every point $P$ of the special fiber $\C_k$ of $\C$, there exists $e\in \N$ such that the completed local ring $\widehat{\calO_{\C,P}}$ is isomorphic to either $R[[S,T]]/(T-\pi^e)\cong R[[S]]$, if $P$ is a smooth point of $\C_k$ through $P$, or to $R[[S,T]]/(ST-\pi^e)$, if $P$ is a double point of $\C_k$ (see \cite[Corollary 10.3.22]{Liu2002} for a direct computation of the latter). 
}

\begin{remark}\label{remark_multiplicity_genus_base_change}
Under the correspondence of Theorem~\ref{theorem_models-vertex-sets}, the notions of multiplicity of a point of type 2 is consistent with its geometrical counterpart.
More precisely, if $\C$ is a model of $C$ and $x\in V_\C$, then $m(x)$ is the multiplicity of the $k$-curve $\overline{\Sp_{\calC}(x)}$ inside $\calC_k$ (see for example \cite[Proposition 6.5.2.(1)]{Ducros}).
This is not true for the genus.
Indeed, the function field of the curve $\overline{\Sp_{\calC}(x)}$ is exactly the residue field $\widetilde{\mathscr H(x)}$ of the complete residue field $\mathscr H(x)$ of $C$ at $x$ (see again \cite[Proposition 6.5.2.(2)]{Ducros}), but the genus $g(x)$ of $x$ is the (geometric) genus of a curve above $\overline{\Sp_{\calC}(x)}$ in the base change of the model $\C$ to the algebraic closure of $K$, which could be greater than the genus of $\overline{\Sp_{\calC}(x)}$ since passing to the special fiber does not commute with base change.
For example, the analytification $C$ of an elliptic curve $\frakC$ that has potentially good reduction has a type 2 point $x$ which has genus 1.
However, if $\frakC$ does not have good reduction over $K$, then it has a regular model $\C$ over $R$ whose special fiber consists of rational curves.
In particular, the point $x$ is associated with an irreducible component $E$ of the special fiber of a model obtained from the regular model $\C$ by a sequence of point blowups, which implies that $E$ is a rational curve.
\end{remark}

\begin{remark}
	The approach to models via vertex sets permits to avoid some difficulties related to the fact that when constructing new models, for example via base change, it is usually necessary to normalize the resulting scheme.
	Here are two examples of this phenomenon in action:
	\begin{enumerate}
		\item If $\C$ is any model of $C$, the inverse image $\Sp_\C^{-1}(\eta)$ through the specialization map of the generic point $\eta$ of a component $E$ of $\C_k$ consists of finitely many type 2 points of $C$, one for each component above $E$ in the special fiber of the normalization of $\C$.
		
		\item Let $L$ be a finite extension of $K$ and denote by $\tau\colon C_L\to C$ the base change morphism.
		Let $V$ be a vertex set of $C$, let $\C_V$ be the model of $C$ associated with $V$ and consider the model $\C_{\tau^{-1}(V)}$ of $C_L$ associated with the vertex set $\tau^{-1}(V)$ of $C_L$.
		Then $\C_{\tau^{-1}(V)}$ is the normalization of the base change of $\C_V$ to the valuation ring of $L$.
	\end{enumerate}
\end{remark}

\pa{
	Let $X$ be a semi-affinoid open subspace of $C$ whose boundary $\partial X$ in $C$ consists of type 2 points.
	Then there exists a natural surjective map $\partial\colon\Ends(X)\to\partial X\subset C$, which sends an end $\varepsilon$ represented by a family $(U_n)_n$ to the type 2 point $\partial(\varepsilon)=\cap_n \partial(U_n)$ of $C$.
	As observed in \ref{point_splitting_function}, we have $\fraks\big(\partial(\varepsilon)\big)\subset\fraks(\varepsilon)$.
	\\
	The multiplicity $m\big(\partial(\varepsilon)\big)$ of the type 2 point $\partial(\varepsilon)$ of $C$ only depends on $X$ and $\varepsilon$, while it does not depend on the ambient curve $C$ itself.
	In order to see this, one can reason as follows.
	Consider a regular model $\C$ of $C$ with strict normal crossings special fiber such that $\partial(\varepsilon)$ belongs to the vertex set $V$ associated with $\C$ and that $X$ contains the unique connected component $X'$ of $C\setminus V$ satisfying $\varepsilon\in \Ends(X')$.
	Then the multiplicity $m\big(\partial(\varepsilon)\big)$ of $\partial(\varepsilon)$ can be read from the completed local ring of the point $\Sp_\C(X')$ of the special fiber $\C_k$ of $\C$, as it is the multiplicity in $\C_k$ of the irreducible component associated with $\varepsilon$, and so also from the canonical model of the regular semi-affinoid space $X'$, and can therefore be read in $X$ without the need of considering the geometry of $C$.
	The multiplicity $m\big(\partial (\varepsilon)\big)$ will be denoted by $m(\varepsilon)$ and called \emph{multiplicity of the end} $\varepsilon$.
	For example, the multiplicity of any end of a disc or an annulus is equal to 1.
	\\
	Note that, for the same reason discussed in Remark~\ref{remark_multiplicity_genus_base_change}, it is not possible to define intrinsically the genus of an end, as a semi-affinoid space can generally be realized as a formal fiber of points lying on curves of different genera.
	For example, if $C$ is the analytification of a genus $g$ curve with good reduction, so that it contains a point $x$ of genus $g$, any connected component $X$ of $C\setminus\{x\}$ is a disc whose boundary point has genus $g$.
	Observe also that one could define directly the multiplicity $m(\varepsilon)$ of an end $\varepsilon$ of a semi-affinoid space $X$ as the multiplicity of the irreducible component corresponding to $\varepsilon$ in the special fiber of a snc resolution of the canonical model of $X$.
	However, the notion of irreducible components of special formal schemes and their multiplicities is quite delicate (see \cite[Sections 2.4 and 2.5]{Nicaise2009}) and its treatment goes beyond the scope of our work.
}

\Pa{Triangulations}{\label{point:def_triangulation}
A vertex set $V$ of $C$ is said to be a \emph{triangulation} of $C$ if each connected component of $C\setminus V$ is either a virtual disc or a virtual annulus.
	If moreover all virtual annuli among the connected components of $C\setminus V$ have two ends, $V$ is said to be a \emph{strong triangulation} of $C$.
Any strong triangulation of $C$ is a triangulation of $C$ by definition.
Conversely, given a triangulation $V$ of $C$, thanks to the observation of \ref{point_break_one-ended_annuli}, by adding to $V$ the bending point of each connected component of $C\setminus V$ which is a virtual annulus with one end we obtain a strong triangulation of $C$; it is the minimal strong triangulation of $C$ containing $V$.
}

\Pa{Terminology}{\label{point:def_triangulation_terminology}
	Since our definition of virtual annulus is more general than the one of \cite{Ducros}, allowing for virtual annuli with one end, the same goes with our definition of triangulations.
	The triangulations of \emph{loc. cit.} are what we call strong triangulations here.
}

\begin{proposition}\label{proposition_existence_triangulation}
		The curve $C$ admits a triangulation.
\end{proposition}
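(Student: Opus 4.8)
The cleanest route to Proposition~\ref{proposition_existence_triangulation} is to extract a triangulation from the semi-stable reduction theorem together with the model–vertex-set dictionary of Theorem~\ref{theorem_models-vertex-sets}. By Deligne--Mumford \cite{DeligneMumford1969}, there exists a finite separable (indeed Galois) extension $L$ of $K$ such that $\frakC_L$ admits a semi-stable model; equivalently, by \ref{point_semistable_models}, there is a vertex set $W$ of $C_L$ such that every connected component of $C_L\setminus W$ is an open disc or an open annulus over $L$. The plan is to descend such a $W$ to a vertex set of $C$ whose complementary components are virtual discs and virtual annuli.

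\textbf{Descent of the vertex set.} Since $W$ is Galois-stable, up to enlarging it we may assume $W$ is invariant under $\Gal(L|K)$: replace $W$ by $\bigcup_{\sigma\in\Gal(L|K)}\sigma(W)$, which is still a finite set of type $2$ points of $C_L$, hence still a vertex set, and adding points to a semi-stable vertex set keeps it semi-stable (the components of the complement only get subdivided into more discs and annuli). Now let $\tau\colon C_L\to C$ be the base change morphism and set $V=\tau(W)$. Using the identification $C_L/\Gal(L|K)\xrightarrow{\sim}C$ from \cite[Proposition~1.3.5.(i)]{Berkovich1990} recalled in the excerpt, $V$ is a finite set; each point of $V$ is of type $2$ because its preimages in $C_L$ are (type $2$ is detected by the residue field of the complete residue field, and $\widetilde{\mathscr H(\tau^{-1}(v))}$ is finite over $\widetilde{\mathscr H(v)}$, so one is transcendental over $k$ iff the other is). Thus $V$ is a vertex set of $C$, and by construction $\tau^{-1}(V)=W$, so $\tau$ restricts to a $\Gal(L|K)$-equivariant map $C_L\setminus W\to C\setminus V$ inducing a homeomorphism $(C_L\setminus W)/\Gal(L|K)\xrightarrow{\sim}C\setminus V$.

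\textbf{Identifying the complementary components.} Let $Z$ be a connected component of $C\setminus V$. Its preimage $\tau^{-1}(Z)$ in $C_L\setminus W$ is a disjoint union of finitely many $\Gal(L|K)$-orbits of connected components of $C_L\setminus W$, each of which is an open disc or an open annulus over $L$ (and the type — disc vs.\ annulus — is constant on a Galois orbit, since $\Gal(L|K)$ acts by $L$-semilinear isomorphisms permuting the components). Pick one such component $Z'$ over $Z$, and let $L'\subseteq L$ be the stabilizer field, i.e.\ $\Gal(L|L')$ is the stabilizer of $Z'$; then $Z$ is $L'$-isomorphic to $Z'/\Gal(L|L')$, so $Z$ is a connected $K$-analytic curve whose base change to $L$ is a disjoint union of open discs (resp.\ open annuli) over $L$ — precisely the definition of a virtual disc (resp.\ virtual annulus) trivialized by $L$. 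It remains only to check that $Z$ is semi-affinoid: this follows because $Z=\Sp_{\calC_V}^{-1}(P)$ is a formal fiber of the normal $R$-model $\calC_V$ of $C$ associated with $V$ via Theorem~\ref{theorem_models-vertex-sets} (using the second bijection in that theorem), and formal fibers of points of normal models are semi-affinoid by \ref{point_formal_fibers}. Hence every connected component of $C\setminus V$ is a virtual disc or a virtual annulus, so $V$ is a triangulation.

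\textbf{Main obstacle.} The genuinely delicate point is the bookkeeping of the descent: ensuring that $W$ can be taken $\Gal(L|K)$-stable without losing semi-stability, and that "virtual disc/annulus over $K$" as defined in \ref{point_break_one-ended_annuli} (a connected semi-affinoid curve becoming a disjoint union of discs/annuli after \emph{some} finite separable base change) is exactly what the quotient construction produces — in particular one must be careful that $Z$ need not itself base-change to a \emph{single} disc or annulus, only to a disjoint union, which is why the definition was phrased that way. Everything else (type $2$ is preserved, $V$ is finite, components correspond to formal fibers) is formal given the results already in the excerpt.
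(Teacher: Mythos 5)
Your argument is correct and is essentially the paper's own proof: both take a semi-stable model of $\frakC_L$, enlarge its vertex set to be $\Gal(L|K)$-stable (which preserves semi-stability), push it down along $\tau$, and observe that each complementary component pulls back to a Galois orbit of discs or annuli, hence is a virtual disc or virtual annulus. The extra details you supply (that $\tau(W)$ is a vertex set, that the components are semi-affinoid as formal fibers) are fine; note only that $\tau^{-1}(Z)$ is a \emph{single} Galois orbit of components, since $Z$ is connected, which is what guarantees the type is well defined.
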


\begin{proof}
By the semi-stable reduction theorem there exist a finite Galois extension $L$ of $K$ and a semi-stable model $\C$ of $\frakC_L$ over the valuation ring of $L$.
Denote by $\tau\colon C_L\to C$ the base change morphism and by $V\subset C_L$ the vertex set of $C_L$ associated with $\C$.
We can assume without loss of generality that the natural action of $G=\Gal(L|K)$ on $C_L$ extends to $\C$ (from the point of view of vertex sets this is obtained by replacing $V$ with its closure under the $G$-action, which still yields a semi-stable model of $C_L$).
For each connected component $X$ of $C\setminus \tau(V)$, the base change $X_L=\tau^{-1}(X)$ of $X$ to $L$ is then a finite union of isomorphic connected components of $C_L\setminus V$, and hence $X$ is a virtual disc or annulus.
\end{proof}

\begin{remark}
Using the results of the next section we can prove a more precise result, namely that $V$ is a triangulation of $C$ if and only if there exists a finite extension $L$ of $K$ trivializing all the components of $C\setminus V$ simultaneously, in which case the model of $C_L$ associated with the vertex set $\tau^{-1}(V)$, where $\tau\colon C_L\to C$ is the base change morphism, is semi-stable.
The reader should also note that, while for our purposes it was sufficient to deduce the result of Proposition~\ref{proposition_existence_triangulation} from the existence of a semi-stable model of $\frakC$, it is also possible to obtain it via a delicate study of the geometry of the analytic curve $C$ itself, and therefore use it to prove the semi-stable reduction theorem (see \cite[Section 6.4]{Ducros}).
\end{remark}

\begin{remarks}\label{remarks_basic_properties_triangulations} We discuss here some simple properties of triangulations:
	\begin{enumerate}
		\item Any triangulation of $C$ contains the set of type 2 points of $C$ that have positive genus. 
		Indeed, virtual discs and annuli can be embedded in the analytic affine line after a base change, and the genus of a type 2 point is invariant under base change.
		\item A triangulation $V$ of $C$ gives rise to a graph $\Sigma(V)$ embedded in $C$, called the \emph{skeleton of $C$ associated with $V$} and defined as the graph whose set of vertices is $V$ and whose edges are the skeletons of the virtual annuli in $\pi_0(C\setminus V)$.
		Since $C$ is the analytification of an algebraic curve, only finitely many of the elements of $\pi_0(C\setminus V)$ are virtual annuli, therefore $\Sigma(V)$ is a finite graph.
		\item If $V$ is a triangulation of $C$, then $C$ retracts by deformation onto $\Sigma(V)$, as each virtual disc retracts onto its boundary point and each virtual annulus retracts onto its skeleton.
		\item \label{remark_genus_formula}
		Assume that $\frakC$ has a semi-stable model $\C$ over $R$.
		Then, for every triangulation $V$ of $C$ we have
		\[
		g(\frakC) = b\big(\Sigma(V)\big)+\sum_{x\in V}g(x),
		\]
		where $b\big(\Sigma(V)\big)=$ denotes the first Betti number of $\Sigma(V)$.
		Indeed, if $\Gamma$ is the dual graph of the special fiber of $\C$ then $C$ retracts by deformation also onto a copy of $\Gamma$, and thus we have $b\big(\Sigma(V)\big)=b(\Gamma)$. The formula then follows from the standard result \cite[Lemma 10.3.18]{Liu2002}.
	\end{enumerate}
\end{remarks}

\section{Minimal triangulations and ramification}

In this section we discuss the notion of minimal (strong) triangulations and prove our first main result, Theorem~\ref{theorem_main}.
We restrict ourselves to the case of curves of genus greater than 1; we will explain how to adapt the proofs to the case of elliptic curves in Section~\ref{section_elliptic_curves}.

\pa{We say that a vertex set $V$ of $C$ is a \emph{minimal triangulation} of $C$ (respectively, a \emph{minimal strong triangulation} of $C$) if it is minimal among the triangulations (respectively, among the strong triangulations) of $C$ ordered by inclusion. 
We can now state the main result of the section.
The existence (and uniqueness) of a minimal triangulation and of a minimal strong triangulation for curves of genus at least 2 will be proven in Proposition~\ref{proposition_mintr_nodes}.
}

\begin{theorem}\label{theorem_main}
	Assume that $g(C)>1$, let $V_{\mathrm{min-tr}}$ be a minimal triangulation of $C$, and let $L$ be a finite Galois extension of $K$ such that $\frakC_L$ has semi-stable reduction.
	Then:
	\begin{enumerate}
		\item \label{theorem_main_divisibility} $\lcm\{m(x)\,|\,x\in V_{\mathrm{min-tr}}\}\,\Big|\,[L:K]\,;$
		\item \label{theorem_main_effective} $L$ can be taken to be tamely ramified over $K$ if and only if $p$ does not divide $\lcm\{m(x)\,|\,x\in V_{\mathrm{min-tr}}\}$.
		When this is the case, then the minimal extension $L'$ of $K$ such that $\frakC_{L'}$ has semi-stable reduction is the unique totally ramified extension of $K$ of degree exactly $\lcm\{m(x)\,|\,x\in V_{\mathrm{min-tr}}\}$.
	\end{enumerate}
\end{theorem}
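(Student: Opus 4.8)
Both parts come down to tracking the multiplicities $m(x)$ of the points $x\in V_{\mathrm{min-tr}}$ under base change, using Proposition~\ref{proposition_multiplicity_basechange} together with the fact, recorded after Remark~\ref{remark_multiplicity_genus_base_change}, that the boundary point of an open disc or an open annulus has multiplicity~$1$. I would begin from the elementary observation that $\lcm\{m(x)\mid x\in V_{\mathrm{min-tr}}\}$ divides a positive integer $N$ if and only if $m(x)\mid N$ for every $x\in V_{\mathrm{min-tr}}$, so that for~\ref{theorem_main_divisibility} it suffices to prove $m(x)\mid[L:K]$ for each such $x$. Fix such an $x$. Since $\frakC_L$ has semi-stable reduction, $L$ contains the minimal Galois extension $L_{\min}$ with this property, which by Proposition~\ref{proposition_mintr_semistable_extension} coincides with the minimal extension of $K$ trivializing all connected components of $C\setminus V_{\mathrm{min-tr}}$; hence $L$ trivializes all of them as well. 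Writing $\tau\colon C_L\to C$ for the base change morphism, every connected component of $C_L\setminus\tau^{-1}(V_{\mathrm{min-tr}})$ is then an honest open disc or open annulus over $L$, so by~\ref{point_semistable_models} the model of $\frakC_L$ attached to the vertex set $\tau^{-1}(V_{\mathrm{min-tr}})$ via Theorem~\ref{theorem_models-vertex-sets} is semi-stable; in particular each point of $\tau^{-1}(V_{\mathrm{min-tr}})$ is a boundary point of a disc or an annulus and so has multiplicity~$1$. Choosing $y\in\tau^{-1}(x)$ and applying Proposition~\ref{proposition_multiplicity_basechange} to $\tau$, we get that $m(x)/\gcd(m(x),[L:K])$ divides $m(y)=1$, that is $m(x)\mid[L:K]$.

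For~\ref{theorem_main_effective}, set $\delta=\lcm\{m(x)\mid x\in V_{\mathrm{min-tr}}\}$. If $\frakC$ becomes semi-stable over some tamely ramified Galois extension $L$, then~\ref{theorem_main_divisibility} gives $\delta\mid[L:K]$ while $p\nmid[L:K]$, so $p\nmid\delta$. Conversely, assume $p\nmid\delta$ and let $M=K(\pi^{1/\delta})$: since $k$ is algebraically closed we have $\mu_\delta\subset K$ by Hensel's lemma, so $M$ is the unique totally (and tamely) ramified extension of $K$ of degree $\delta$, and it is Galois over $K$. The heart of the argument is the claim that $\frakC_M$ has semi-stable reduction. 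Granting this, minimality of $L_{\min}$ forces $L_{\min}\subseteq M$, so $[L_{\min}:K]$ divides $[M:K]=\delta$, while~\ref{theorem_main_divisibility} gives $\delta\mid[L_{\min}:K]$; hence $[L_{\min}:K]=\delta=[M:K]$ and therefore $L_{\min}=M$. Thus $L_{\min}$ is tamely ramified and equal to the unique totally ramified extension of degree $\delta$, which together with the previous paragraph gives the stated equivalence; and it is minimal among all extensions realizing semi-stable reduction, since the proof of~\ref{theorem_main_divisibility} applies unchanged to any finite separable such extension (which still trivializes the components of $C\setminus V_{\mathrm{min-tr}}$, e.g.\ by Proposition~\ref{proposition_mintr_semistable_extension} applied to its Galois closure), so that no such extension has degree smaller than $\delta$.

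It remains to prove the claim. By Proposition~\ref{proposition_mintr_semistable_extension} it is enough to show that $M$ trivializes every connected component $Y$ of $C\setminus V_{\mathrm{min-tr}}$. Such a $Y$ is a virtual disc or a virtual annulus; its boundary points lie in $V_{\mathrm{min-tr}}$, and when $Y$ is a virtual annulus with one end its bending point has genus $0$ and, as in the analysis of~\ref{point_tame_one-ended_annuli} and its higher-degree analogues, a multiplicity dividing that of the boundary point. In all cases the multiplicities involved divide $\delta$, hence are prime to $p$. Using the classification of virtual discs and annuli trivialized by a tamely ramified extension (Examples~\ref{example_Ducros2013} and~\ref{example_FantiniTurchetti2018}, their generalizations, and~\ref{point_tame_one-ended_annuli}, the reverse implications coming from Proposition~\ref{proposition_multiplicity_basechange}) one identifies $Y$ with a generalized fractional disc or annulus trivialized by $K(\pi^{1/m_Y})$, where $m_Y$ is the least common multiple of the multiplicities above; since $m_Y\mid\delta$, this field is contained in $M$, so $M$ trivializes $Y$, as wanted.

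The main obstacle is this last step: establishing that a virtual disc or virtual annulus all of whose boundary multiplicities (and bending-point multiplicity, in the one-ended case) are prime to $p$ is necessarily trivialized by an explicit tamely ramified extension, uniformly in the number of ends — the one-ended annulus being the delicate case (cf.\ \ref{point_tame_one-ended_annuli}) — together with the bookkeeping needed to locate the multiplicity and field of constants of the bending point of a one-ended component of $C\setminus V_{\mathrm{min-tr}}$, which a priori belongs only to $V_{\mathrm{min-str}}$. Everything else is a formal consequence of Proposition~\ref{proposition_multiplicity_basechange}, the description of semi-stable models of~\ref{point_semistable_models}, and the characterization of $L_{\min}$ via the minimal triangulation.
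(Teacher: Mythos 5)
Your argument for part~(i) is correct and is essentially the paper's: where the paper says ``$\tau^{-1}(V_{\mathrm{min-tr}})$ is the minimal triangulation of $C_L$ by Lemma~\ref{lemma_minimal_triangulation_basechange}, and $\frakC_L$ is semi-stable, so $m(y)=1$ by Proposition~\ref{proposition_mintr_semistable_extension}'', you say ``$L$ trivializes every component of $C\setminus V_{\mathrm{min-tr}}$, so over $L$ the components are genuine discs and annuli and their boundary points have multiplicity one''. These are the same computation, and the conclusion via Proposition~\ref{proposition_multiplicity_basechange} is identical. The forward implication of part~(ii), and the deduction of $L_{\min}=M$ from the claim that $\frakC_M$ is semi-stable, are also fine.

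The gap is in the proof of that claim. You reduce to showing that every component $Y$ of $C\setminus V_{\mathrm{min-tr}}$ is trivialized by $K(\pi^{1/m_Y})$ with $m_Y\mid\delta$, and for this you invoke the classifications of Examples~\ref{example_Ducros2013}, \ref{example_FantiniTurchetti2018} and the analysis of \ref{point_tame_one-ended_annuli}. But every one of those results takes tame trivializability as a \emph{hypothesis}; none asserts that a virtual disc or annulus whose boundary multiplicities are prime to $p$ must be tamely trivialized, and that assertion is precisely the content of part~(ii), so your argument is circular at its crucial step. (The one-ended case is the clearest instance: \ref{point_tame_one-ended_annuli} begins by assuming $X$ is trivialized by a tame extension, and even there the bending point has multiplicity $2$, which need not divide the multiplicity of the boundary point, so the bookkeeping you propose for locating its multiplicity does not go through as stated.) You flag this as ``the main obstacle'' but do not close it. The paper closes it by a route that never classifies the components: base change to the tame degree-$\delta$ extension $L'$, use the tame case of Proposition~\ref{proposition_multiplicity_basechange} to see that every point of $\tau'^{-1}(V_{\mathrm{min-tr}})$ has multiplicity $m(x)/\gcd(m(x),\delta)=1$, identify $\tau'^{-1}(V_{\mathrm{min-tr}})$ with the minimal triangulation of $C_{L'}$ via Lemma~\ref{lemma_minimal_triangulation_basechange}, and then apply the implication $(iii)\Rightarrow(i)$ of Proposition~\ref{proposition_mintr_semistable_extension}, whose proof (Serre's criterion plus invariance of the special fiber under base change) is the genuinely non-formal input. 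That implication is the tool your proposal is missing; with it, no statement about individual virtual discs or annuli is needed.
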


\pa{In order to prove this theorem we need some intermediate results, starting with an alternative description of minimal triangulations.
As we did in the case of virtual annuli, we call \emph{analytic skeleton} of $C$ the set $\anskC$ of those points of $C$ that have no neighborhood in $C$ isomorphic to a virtual disc.
If $V$ is a triangulation of $C$, then $\anskC$ is contained in the skeleton $\Sigma(V)$ associated with $V$ constructed in Remark~\ref{remarks_basic_properties_triangulations}, hence $\anskC$ is itself a finite graph.
We say that a point $x$ of $\anskC$ is a \emph{node of} $\anskC$ if either one of the following three conditions holds: 
\begin{enumerate}
\item the genus $g(x)$ of $x$ is positive;
\item the point $x$ has degree different from $2$ in the graph $\anskC$;
\item the splitting function $s\colon \anskC\to \Z$ is discontinuous (that is, not locally constant) at $x$.
\end{enumerate}

\noindent
The first part of the following proposition can also be found in the discussion of \cite[\S 5.4]{Ducros} (the precise statement is given in 5.4.12.1 of \emph{loc.\ cit.}); we include a direct proof for the reader's benefit, as it provides the starting point to prove the second part as well.
}

\begin{proposition}\label{proposition_mintr_nodes}
	Assume that $g(C)>1$.
	Then: 
	\begin{enumerate}
		\item $C$ admits a unique minimal strong triangulation $V_\mathrm{min-str}$, which coincides with the set of nodes of $\anskC$ and with the set of points of $C$ that have no neighborhood isomorphic to a virtual annulus with two ends;
		\item $C$ admits a unique minimal triangulation $V_\mathrm{min-tr}$, which coincides with the set of points of $C$ that have no neighborhood isomorphic to a virtual annulus.
	\end{enumerate}
\end{proposition}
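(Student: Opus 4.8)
## Proof Proposal

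The plan is to prove both parts together, establishing existence of a minimal object by showing the candidate sets are themselves (strong) triangulations and are contained in every (strong) triangulation. I would begin with part (i). Let $N$ denote the set of nodes of $\anskC$. The first goal is to show that $N$ is a vertex set, i.e., that it is finite and nonempty, and consists of type $2$ points. Finiteness follows from the fact that $\anskC$ is a finite graph (as noted in the paragraph preceding the proposition, using that $\anskC \subseteq \Sigma(V)$ for any triangulation $V$, which exists by Proposition~\ref{proposition_existence_triangulation}), together with lower semi-continuity of the splitting function $s$ (from \ref{point_splitting_function}), which forces its discontinuity locus to be finite on a finite graph; the genus is also nonzero at only finitely many points (Remarks~\ref{remarks_basic_properties_triangulations}(i)). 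Nonemptiness uses $g(C) > 1$: if $N$ were empty, then $\anskC$ would be a finite graph with all points of degree $2$ (hence a disjoint union of circles or empty) with no positive-genus points and locally constant $s$; combined with the genus formula (Remark~\ref{remarks_basic_properties_triangulations}(\ref{remark_genus_formula})) and an analysis of the one-ended vs. two-ended annuli, this would force $g(C) \leqslant 1$. Each node is a type $2$ point since it lies on the analytic skeleton (type $3$ points have virtual-disc-free but locally interval-like neighborhoods and cannot have positive genus, degree $\neq 2$, or $s$-discontinuity in the relevant sense — more carefully, a point of type $\neq 2$ on $\anskC$ has a neighborhood that is an annulus, hence is not a node).

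Next I would show $N$ is a strong triangulation. The components of $C \setminus N$ fall into two classes: those disjoint from $\anskC$, which by definition of the analytic skeleton are virtual discs (here one needs that "no neighborhood is a virtual disc" globalizes — a connected open set disjoint from $\anskC$ whose boundary is a single node is a virtual disc, which follows from Proposition~\ref{proposition_Ducros-criterion} since over $\Kalg$ each component is contractible with one end); and those meeting $\anskC$, which are segments of $\anskC \setminus N$ together with the attached discs. On such a segment all interior points have degree exactly $2$, genus $0$, and locally constant $s$; I would argue this segment (with its discs) is a virtual annulus with two ends, again via Proposition~\ref{proposition_Ducros-criterion}, checking contractibility and the two-ends condition over $\Kalg$ using that $s$ is constant along the open segment. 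Conversely, every strong triangulation $V$ contains $N$: positive-genus points lie in $V$ by Remarks~\ref{remarks_basic_properties_triangulations}(i); a point of degree $\neq 2$ in $\anskC$ cannot be interior to a two-ended virtual annulus component of $C \setminus V$ (whose skeleton is a single arc, contributing degree $2$) nor to a virtual disc (which contributes nothing to $\anskC$), so it must be in $V$; similarly an $s$-discontinuity point cannot lie in the interior of a two-ended virtual annulus, since by the remark in the paragraph after \ref{skeletons_virtual_annuli} the splitting function is constant equal to $[\mathfrak s(X):K]$ along the skeleton of such an annulus. Hence $N \subseteq V$, so $N$ is the unique minimal strong triangulation; the characterization as the set of points with no two-ended-virtual-annulus neighborhood then follows, since points not in $N$ visibly have such neighborhoods (interior segment points or disc points) — wait, disc points have disc neighborhoods, not annulus neighborhoods; I would instead note directly that $V_{\mathrm{min-str}}$ equals both sets by a separate small argument comparing the two candidate descriptions.

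For part (ii), I would apply \ref{point_break_one-ended_annuli} and \ref{point:def_triangulation}: given the strong triangulation $V_\mathrm{min-str}$, the bending points of its one-ended virtual annulus components are exactly the "extra" points, and removing them from $V_\mathrm{min-str}$ — more precisely, taking $V_{\mathrm{min-tr}} := V_\mathrm{min-str} \setminus \{\text{bending points that become interior to a one-ended annulus}\}$ — yields a triangulation. One must check this is still a vertex set (finite, nonempty: nonemptiness again from $g(C) > 1$, as a one-ended annulus has genus $0$ and removing all points cannot leave a curve of genus $> 1$) and that it is contained in every triangulation $V$: a point $x \in V_{\mathrm{min-tr}}$ has no neighborhood isomorphic to a virtual annulus (not even a one-ended one — this is where the node conditions are used, noting that the bending point of a one-ended annulus has $s$ continuous and degree issues controlled, so a point surviving the passage is genuinely not interior to any virtual annulus), hence must lie in $V$. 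The characterization of $V_{\mathrm{min-tr}}$ as the set of points with no virtual-annulus neighborhood then drops out.

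\medskip

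\noindent\textbf{Main obstacle.} The hard part will be the careful bookkeeping in part (i) showing that an open segment of $\anskC \setminus N$ together with its pendant discs is genuinely a virtual annulus with \emph{two} ends, and conversely that $s$-discontinuity points cannot be absorbed into the interior of such annuli — this requires invoking the behavior of the splitting function along skeletons of virtual annuli (the paragraph after \ref{skeletons_virtual_annuli} and \ref{point_break_one-ended_annuli}) and handling the one-ended case correctly, since a one-ended virtual annulus has an $s$-jump precisely at its bending point. Distinguishing when a degree-$2$, genus-$0$, $s$-locally-constant segment closes up into a one-ended versus two-ended annulus, and making sure the minimality arguments track this distinction between the two parts of the proposition, is the delicate point; everything else is a routine application of Proposition~\ref{proposition_Ducros-criterion} and the Fusion Lemmas~\ref{lemmas_fusion}.
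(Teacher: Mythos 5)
Your proposal reaches the right conclusions and shares the paper's overall architecture (show the candidate set is contained in every (strong) triangulation, then show it is itself one), but it takes a genuinely different route at the decisive step. The paper never tries to verify directly that a segment of $\anskC\setminus N$ with its pendant discs is a two-ended virtual annulus; instead it starts from an \emph{arbitrary} strong triangulation $V$ (which exists by Proposition~\ref{proposition_existence_triangulation}), prunes degree-one vertices not on $\anskC$ with the first Fusion Lemma~\ref{lemma_fusion_disc} to reach $V'$ with $\Sigma(V')=\anskC$, and then deletes the non-nodes of $V'$ one at a time with the second Fusion Lemma~\ref{lemma_fusion_annulus} (the hypothesis $\fraks(X_1)=\fraks(X_2)$ being supplied exactly by local constancy of $s$ at a non-node). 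Part (ii) is handled the same way with the third Fusion Lemma~\ref{lemma_fusion_annulus_one_end}. Your route replaces this descent by a direct application of Proposition~\ref{proposition_Ducros-criterion} to the components of $C\setminus N$. That can be made to work, but the conditions you must check there — contractibility and two-endedness of every component of $X\times_K\Kalg$ lying over a segment — are precisely what you flag as the ``main obstacle,'' and they are the real content: you must rule out a loop of $\ansk{C_\Kalg}$ folding onto the segment (which would force an $s$-discontinuity at the fold, contradicting that the segment contains no node) and show the preimage splits into exactly $s$-many arcs. So the work you defer is not routine; it is the same work the Fusion Lemmas package, just reorganized. The Fusion-Lemma descent buys you locality (each merge is a two- or three-piece gluing with controlled constants), while your approach buys a more conceptual ``the nodes are exactly the obstructions'' statement at the cost of a global topological analysis over $\Kalg$.

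Two smaller points. First, your hesitation about the neighborhood characterization is unfounded: every point of a virtual disc \emph{does} have a neighborhood isomorphic to a virtual annulus with two ends (in $D_K$, a point $x$ with $|T(x)|=\rho$, including a rational center after recentering, lies in some $\{r<|T-a|<R\}$ with $r<|a|<R$), which is exactly how the paper concludes that $V_{\mathrm{min\text{-}str}}$ is the set of points with no such neighborhood; no separate argument is needed. Second, for nonemptiness the paper does more than invoke the genus formula: it shows $\anskC$ contains a node of positive genus or of degree at least~$2$ (not merely an $s$-discontinuity), and this stronger statement is what later guarantees that $V_{\mathrm{min\text{-}tr}}=S'$ is nonempty in part (ii), since the points removed from $V_{\mathrm{min\text{-}str}}$ all have genus~$0$ and degree~$1$ in $\anskC$. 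Make sure your nonemptiness argument delivers that stronger form, or part (ii) of your proof has a gap.
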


\begin{proof}
In order to prove the first part of the proposition, we begin by showing that $\anskC$ is nonempty and that it has at least a node.
More precisely, we will show the existence of a node that has either positive genus or degree at least 2 in $\anskC$.
To do this, we can assume without loss of generality that $C$ does not have a type 2 point of positive genus.
Let $L$ be a finite Galois extension of $K$ and let $\tau\colon C_L\to C$ be the base change morphism.
Then we have $\ansk{C_L} \subset \tau^{-1}\big(\anskC\big)$.
Indeed, given $x \in C \setminus \ansk{C}$ there exists a virtual disc $X$ in $C$ that contains $x$, and thus $X_L\cong\tau^{-1}(X)$ is a disjoint union of virtual discs in $C_L$, from which the claim follows.
Now, using the semi-stable reduction theorem we can suppose that $\frakC$ has semi-stable reduction over $L$.
Then $C_L$ contains at least two loops, as can be deduced from the genus formula recalled in Remark~\ref{remark_genus_formula}.
Observe that if $\gamma\colon [0,1] \to C_L$ defines a loop on $C_L$, that is $\gamma$ is continuous, injective, and $\gamma(0)=\gamma(1)$, then $\gamma([0,1])$ is contained in $\ansk{C_L}$, because a point of $\gamma([0,1])$ cannot have a contractible open neighborhood with only one end.
In particular, $\ansk{C_L}$ is nonempty, and therefore so is $\tau\big(\ansk{C_L}\big)=\ansk{C_L}/\Gal(L|K)\subset \anskC$.
We will now show the existence of a node of degree at least 2 in $\anskC$.
This is immediate if $\tau\big(\ansk{C_L}\big)$ contains at least one point of degree at least three, (for example, this is the case if $\tau\big(\ansk{C_L}\big)$ contains at least two loops, that is if $\Gal(L|K)$ fixes point-wise at least two loops of $\ansk{C_L}$).
Whenever this is not the case, then $\tau\big(\ansk{C_L}\big)$ is either a loop or a compact interval.
In both cases, it is easy to verify by elementary topological arguments that there exists a point of degree 2 of $\tau\big(\ansk{C_L}\big)$ that has less inverse images under $\tau$ than some of its neighbors, so that the splitting function $\fraks$ is discontinuous at this point.
Let us now prove that the set $S$ of nodes of $\anskC$ is contained in all strong triangulations of $C$. 
Let $V$ be an arbitrary strong triangulation of $C$, which exists by Proposition~\ref{proposition_existence_triangulation}.
We already observed that $\anskC$ is contained in the skeleton $\Sigma(V)$ associated with $V$.
Assume that $x$ is a point of $V$ that has degree one in the graph $\Sigma(V)$ and such that $x$ is not in $\anskC$; such a point exists unless $V$ is contained in $\anskC$.
Then by the first Fusion Lemma~\ref{lemma_fusion_disc} we can glue the unique virtual annulus with two ends among the components of $C\setminus V$ adjacent to $x$ to a virtual disc containing $x$, showing that $V\setminus\{x\}$ is also a strong triangulation.
This ensures that we can delete recursively all vertices of degree one that are not contained in $\anskC$, until we obtain a strong triangulation $V'\subset V$ such that $\Sigma(V')\subset \anskC$, and hence  $\Sigma(V')=\anskC$.
If $x$ is a point of $S\setminus V'$, then the connected component $X$ of $C\setminus V$ containing $x$ is a virtual annulus whose skeleton is $X\cap\anskC$, since the skeleton of a virtual annulus coincides with its analytic skeleton.
In particular, $x$ has genus $0$, degree $2$ in $\anskC$, and $\fraks$ is constant on $\anskC$ locally at $x$, contradicting the fact that $x$ is a node of $\anskC$.
This proves that $S\subset V'$, in particular $S$ is contained in every strong triangulation of $C$.
To prove that $S$ is the unique minimal strong triangulation of $C$, it remains to show that $S$ is itself a strong triangulation.
In order to do so, we will show that we can delete any point $x$ of $V\setminus S$ and still obtain a strong triangulation.
Since such a point $x$ is not a node of $\anskC$, it has genus 0 and degree 2 in $\anskC$ and hence there are precisely two virtual annuli with two ends $X_1$ and $X_2$ among the connected components of $C\setminus V'$ adjacent to $x$, and the skeleton of each of the $X_i$ coincides with $X_i\cap\anskC$.
The fact that $x$ is not a node implies that $\fraks$ is constant on a neighborhood of $x$.
Since on the skeleton of a virtual annulus $\fraks$ is constant by \ref{skeletons_virtual_annuli}, this implies that $\fraks(X_1)=\fraks(X_2)$.
Then by the second Fusion Lemma~\ref{lemma_fusion_annulus} the connected component of $C\setminus\big(V'\setminus\{x\}\big)$ containing $x$ is a virtual annulus with two ends, therefore $V\setminus\{x\}$ is a strong triangulation of $C$.
This proves that $S$ is the minimal strong triangulation of $C$.
Now, any point of $C\setminus S$ has a neighborhood isomorphic to a virtual annulus with two ends, since this is the case for every point of a virtual disc.
Conversely, no point of $S$ can have a neighborhood isomorphic to a virtual annulus with two ends, otherwise by repeating the argument using the Fusion Lemma above we would  obtain a smaller strong triangulation.
This completes the proof of the first part of the proposition.
It remains to show that the set $S'$ obtained by removing from $S$ those points that have a neighborhood isomorphic to a virtual annulus is the minimal triangulation of $C$.
First we observe that if $x$ is a point of $S\setminus S'$, then any neighborhood as above will be a virtual annulus with one end whose bending point is $x$.
In particular, $x$ has degree 1 in $\anskC$ and genus $g(x)=0$.
It follows that $S'$ is nonempty, since we have proven that $\anskC$ has a node that has either positive genus or degree at least 2.
Observe also that $S'$ is contained in any triangulation $V''$ of $C$, since any point of $C\setminus V''$ has a neighborhood isomorphic to a virtual disc.
To conclude the proof of the proposition, it remains to show that $S'$ is itself a triangulation of $C$.
This can be done in a similar way as in the proof of the first part of the proposition, by removing the points of $S\setminus S'$ one by one applying the third Fusion Lemma~\ref{lemma_fusion_annulus_one_end}.
\end{proof}

\begin{remarks}
	Let us comment on a few points that arose in the course of the proof of Proposition~\ref{proposition_mintr_nodes}.
	\begin{enumerate}
		\item
		We have shown that if $C$ has genus at least two then $\anskC$ is nonempty and it has at least a node.
		Note that the hypothesis on the genus of $C$ is necessary: the projective line has empty analytic skeleton, and the analytic skeleton of an elliptic curve with multiplicative reduction is a circle with no nodes.
		However, the theory of this paper can be easily adapted to the general case once we take into account marked points in the definition of a minimal (strong) triangulation. 
		This will be discussed in Section~\ref{section_elliptic_curves}, where we will prove an analogue of Theorem~\ref{theorem_main} for elliptic curves.
		\item In the proof we have made use of the inclusion of $\ansk{C_L}$ in $\tau^{-1}\big(\anskC\big)$.
		In fact, the equality $\ansk{C_L} = \tau^{-1}\big(\anskC\big)$ holds, as can be seen from combining the proposition with Lemma~\ref{lemma_minimal_triangulation_basechange} below.
		\item It is also worth stating explicitly the following fact that appeared in the proof: an element of $V_\mathrm{min-str}$ which is not an element of $V_\mathrm{min-tr}$ has necessarily degree 1 in $\anskC$.
	\end{enumerate}
\end{remarks}

\begin{lemma}\label{lemma_minimal_triangulation_basechange}
	Assume that $g(C)>1$, let $V$ be a vertex set of $C$, let $L$ be a finite extension of $K$, and denote by $\tau\colon C_L\to C$ the base change morphism.
	Then:
	\begin{enumerate}
		\item $V$ is a triangulation of $C$ if and only if  $\tau^{-1}(V)$ is a triangulation of $C_L$.
		\item $V$ is the minimal triangulation of $C$ if and only if  $\tau^{-1}(V)$ is the minimal triangulation of $C_L$.
	\end{enumerate}
\end{lemma}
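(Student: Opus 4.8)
The plan is to deduce both parts from the structural criterion of Proposition~\ref{proposition_Ducros-criterion} together with the characterization of minimal triangulations in Proposition~\ref{proposition_mintr_nodes}. First I would make two harmless reductions on $L$. Writing $L|K$ as a purely inseparable extension of a separable one, I observe that a purely inseparable base change induces a homeomorphism of Berkovich curves matching up type~$2$ points, their multiplicities and genera, and virtual discs and annuli; so the statement for $L|K$ follows from the statement for the separable part, and I may assume $L|K$ separable. Then, letting $M|K$ be the Galois closure of $L|K$ (so that $M|K$ and $M|L$ are both Galois) and noting that the base change $C_M\to C$ factors through $C_L$ with the corresponding preimage operations composing, the statements of the lemma for $M|K$ and for $M|L$ together give it for $L|K$; so I may assume $L|K$ is Galois, with group $G$, and $\tau\colon C_L\to C$ the quotient map. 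I would also record the routine facts that $\tau^{-1}(V)$ is a vertex set of $C_L$ (fibers of $\tau$ over type~$2$ points are finite and consist of type~$2$ points) and that $g(C_L)=g(C)>1$, so that Proposition~\ref{proposition_mintr_nodes} applies over $L$.

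For part (i): in the forward direction, each connected component $X$ of $C\setminus V$ is a virtual disc or annulus trivialized by some finite separable $L'|K$, so its preimage $\tau^{-1}(X)=X\times_KL$ is a union of connected components of $C_L\setminus\tau^{-1}(V)$, each of them trivialized by the compositum $LL'$ and hence a virtual disc or annulus over $L$; as $X$ varies these exhaust $\pi_0(C_L\setminus\tau^{-1}(V))$, so $\tau^{-1}(V)$ is a triangulation. For the converse, given a component $X$ of $C\setminus V$ with $\tau^{-1}(X)=\bigsqcup_iY_i$ a disjoint union of virtual discs and annuli over $L$, I pass to $\Kalg$: since $L\subseteq\Kalg$ we have $X\times_K\Kalg=\bigsqcup_i(Y_i\times_L\Kalg)$, which is a disjoint union of open discs and annuli over $\Kalg$. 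Hence $X$ contains no type~$2$ point of positive genus and every connected component of $X\times_K\Kalg$ is contractible; and since these components are permuted transitively by $\Aut(\Kalg|K)$ they are pairwise isomorphic, hence all discs or all annuli, and in particular all with the same number of ends. Proposition~\ref{proposition_Ducros-criterion} then shows that $X$ is a virtual disc or annulus, so $V$ is a triangulation of $C$.

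For part (ii) I would prove the stronger identity $\tau^{-1}\big(V_{\mathrm{min-tr}}(C)\big)=V_{\mathrm{min-tr}}(C_L)$, which implies the lemma because $\tau^{-1}$ is injective on subsets of $C$ ($\tau$ being surjective). One inclusion is immediate from part (i): $\tau^{-1}\big(V_{\mathrm{min-tr}}(C)\big)$ is a triangulation of $C_L$, hence contains its minimal triangulation. For the reverse inclusion the key point is that $G$ permutes the triangulations of $C_L$: each $\sigma\in G$ acts on $C_L$ by a $K$-analytic automorphism, which preserves the number of ends of an open subspace and the genus and multiplicity of type~$2$ points, hence sends virtual discs and annuli over $L$ to virtual discs and annuli over $L$ (by Proposition~\ref{proposition_Ducros-criterion}). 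By uniqueness of the minimal triangulation, $V_{\mathrm{min-tr}}(C_L)$ is $G$-stable, so it equals $\tau^{-1}(W)$ for the vertex set $W:=\tau\big(V_{\mathrm{min-tr}}(C_L)\big)$ of $C$; by part (i) again $W$ is a triangulation of $C$, whence $V_{\mathrm{min-tr}}(C)\subseteq W$ and $\tau^{-1}\big(V_{\mathrm{min-tr}}(C)\big)\subseteq\tau^{-1}(W)=V_{\mathrm{min-tr}}(C_L)$.

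The only genuinely delicate step is the converse direction of part (i) --- descending the property of being a virtual disc or annulus from $C_L$ to $C$ --- and it is handled exactly by combining Ducros' geometric criterion (Proposition~\ref{proposition_Ducros-criterion}) with the transitivity of $\Aut(\Kalg|K)$ on the geometric connected components of a connected $K$-analytic curve. Once this is in place, the $G$-stability of the minimal triangulation and the remaining bookkeeping in part (ii) are formal.
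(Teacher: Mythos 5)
Your proof is correct and follows essentially the same strategy as the paper's: reduce to the Galois case, note that the forward direction of (i) is immediate from the definitions, use the transitivity of $\Gal(L|K)$ on the connected components of $\tau^{-1}(\tau(Y))$ for the converse, and deduce (ii) from the Galois-invariance of the minimal triangulation of $C_L$ combined with part (i). The only (harmless) variation is in the descent step of (i), where you pass to $\Kalg$ and invoke Proposition~\ref{proposition_Ducros-criterion}, whereas the paper argues directly from the definition of virtual disc/annulus by observing that the Galois translates of a component are virtual discs/annuli over $L$ and hence admit a common trivializing extension.
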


\begin{proof}
	Observe that if $X\subset C$ is a virtual disc (respectively a virtual annulus), then all the connected components of $X_L\cong\tau^{-1}(X)$ are virtual discs (respectively virtual annuli).
	In particular, if $V$ is a triangulation of $C$ then $\tau^{-1}(V)$ is a triangulation of $C_L$.
	Suppose now that $L$ is Galois over $K$ and let $Y$ be a connected component of $C_L \setminus \tau^{-1}(V)$.
	Then the connected component of $\tau^{-1}\big(\tau(Y)\big)$ that contains $Y$ is $Y$ itself, and in particular all connected components of $\tau^{-1}\big(\tau(Y)\big)$ are translates of $Y$ under some element of $\Gal(L|K)$, and are thus isomorphic to $Y$.
	We deduce that if $Y$ is a virtual disc (respectively a virtual annulus), then so is $\tau(Y)$, and in particular $V$ is a triangulation as long as $\tau^{-1}(V)$ is one.
	In the general case, we obtain the same result by first doing a base change to a finite Galois extension of $K$ containing $L$ and the applying the result we have just proven in the Galois case.
	This concludes the proof of the first part of the lemma.
	If $L$ is Galois over $K$, the second part of the lemma follows now from the fact that the minimal triangulation of $C_L$ is Galois-invariant, which is an immediate consequence of Proposition~\ref{proposition_mintr_nodes}.
	Indeed, a point of $C_L$ has a neighborhood isomorphic to a virtual annulus if and only if the same is true for any of its translates by elements of $\Gal(L|K)$.
	The general case can again be reduced to the Galois case in a similar way as in the proof of the first part of the lemma.
\end{proof}

\begin{remark}
	The main reason why the notion of triangulation is more useful than the one of strong triangulation in our context is the fact that it behaves better under base change.
	Indeed, the lemma above does not hold for the strong triangulations of $C$, since any strong triangulation is required to contain the set of bending points of the virtual annuli with one end that are contained in $C$, while the base change of that set under an extension $L$ of $K$ trivializing those virtual annuli is not necessarily contained in all strong triangulations of $C_L$.
\end{remark}

\begin{proposition}\label{proposition_mintr_semistable_extension}
	Assume that $g(C)>1$ and let $V_{\mathrm{min-tr}}$ be the minimal triangulation of $C$. 
	Then the following are equivalent:
	\begin{enumerate}
		\item $\mathfrak C$ admits semi-stable reduction over $K$\,;
		\item all the connected components of $C\setminus V_{\mathrm{min-tr}}$ are discs or annuli over $K$\,;
		\item $m(x)=1$ for every $x$ in $V_\mathrm{min-tr}$.
	\end{enumerate}
	When these properties hold, then $V_{\mathrm{min-tr}}$ is a strong triangulation of $C$ and the associated model is the stable model of $\mathfrak C$.
\end{proposition}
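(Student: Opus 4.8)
The plan is to establish the cyclic chain of implications $(i)\Rightarrow(ii)\Rightarrow(iii)\Rightarrow(i)$, and then to read off the final sentence of the statement from the proof of $(i)\Rightarrow(ii)$. Throughout I write $\C:=\C_{V_{\mathrm{min-tr}}}$ for the normal $R$-model of $C$ attached to $V_{\mathrm{min-tr}}$ by Theorem~\ref{theorem_models-vertex-sets}.

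For $(i)\Rightarrow(ii)$ the key claim is that, when $\frakC$ has semi-stable reduction over $K$, the minimal triangulation is exactly the vertex set of the stable model. Since $g(C)>1$, there is a stable model $\C_{\mathrm{st}}$ of $\frakC$ over $R$; by \ref{point_semistable_models} the components of $C\setminus V_{\C_{\mathrm{st}}}$ are discs and two-ended annuli over $K$, so $V_{\C_{\mathrm{st}}}$ is a strong triangulation and hence $V_\mathrm{min-str}\subseteq V_{\C_{\mathrm{st}}}$. Conversely, every $x\in V_{\C_{\mathrm{st}}}$ is a node of $\anskC$: if $g(x)=0$, then by stability the corresponding component of $(\C_{\mathrm{st}})_k$ carries at least three special points, so $x$ has degree $\geqslant 3$ in $\anskC$, in particular degree $\neq 2$; thus $g(x)>0$ or $\deg_{\anskC}(x)\neq 2$, and $x$ is a node. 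By Proposition~\ref{proposition_mintr_nodes} this gives $V_{\C_{\mathrm{st}}}\subseteq V_\mathrm{min-str}$, hence equality. Moreover every degree-$1$ vertex of $\anskC$ then has positive genus (again by stability), so it lies in every triangulation by Remarks~\ref{remarks_basic_properties_triangulations}; since the points of $V_\mathrm{min-str}\setminus V_{\mathrm{min-tr}}$ have degree $1$ in $\anskC$ and genus $0$, there are none, so $V_{\mathrm{min-tr}}=V_\mathrm{min-str}=V_{\C_{\mathrm{st}}}$. Consequently the components of $C\setminus V_{\mathrm{min-tr}}$ are the formal fibers of the closed points of $(\C_{\mathrm{st}})_k$, that is discs (at smooth points) and two-ended annuli (at nodes) over $K$; this is $(ii)$, and it simultaneously shows that $V_{\mathrm{min-tr}}$ is a strong triangulation and that the model it induces is $\C_{\mathrm{st}}$, giving the last sentence of the proposition.

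The implication $(ii)\Rightarrow(iii)$ is immediate: by \ref{point_semistable_models} the model $\C$ is then semi-stable, so $\C_k$ is reduced, and by Remark~\ref{remark_multiplicity_genus_base_change} one has $m(x)=1$ for every $x\in V_{\mathrm{min-tr}}$, being the multiplicity of the component $\overline{\Sp_\C(x)}$ of $\C_k$. (The same observation yields $(ii)\Rightarrow(i)$ directly and could replace one step of the cycle.) For $(iii)\Rightarrow(i)$ I would argue that $\C$ is semi-stable. First, $\C_k$ is reduced: by Remark~\ref{remark_multiplicity_genus_base_change} each of its components has multiplicity $m(x)=1$. By \ref{point_semistable_models} it then suffices to show that every component $X$ of $C\setminus V_{\mathrm{min-tr}}$ — equivalently, the formal fiber of each closed point $P$ of $\C_k$ — is a disc or a two-ended annulus over $K$. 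We already know $X$ is a virtual disc or a virtual annulus, and that all its ends have multiplicity $1$, since $\partial X\subseteq V_{\mathrm{min-tr}}$ and $m(\varepsilon)=m\big(\partial(\varepsilon)\big)$. Now, as $\C$ is excellent, $\widehat{\calO_{\C,P}}$ is already normal, hence equals $\calO^\circ(X)$; it is a two-dimensional normal complete local $R$-algebra, flat over $R$, with reduced special fiber and with at most two analytic branches through $P$ (one branch when $X$ is a virtual disc). The point is that such a ring must be isomorphic to $R[[T]]$ — in the one-branch case the reduced special fiber is then regular, so the ring is regular by the local flatness criterion and thus a power series ring, and $X$ is a disc over $K$ — or to $R[[S,T]]/(ST-\pi^n)$ — in the two-branch case, by the classification of normal local rings of arithmetic surfaces with nodal reduced special fiber, along the lines of the computation behind \cite[Corollary 10.3.22]{Liu2002}, and $X$ is then a two-ended annulus over $K$; in particular a virtual annulus with one end cannot occur. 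Hence every formal fiber of $\C$ is a disc or annulus over $K$, so $\C$ is semi-stable by \ref{point_semistable_models}, which is $(i)$.

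The main obstacle is precisely this last step: upgrading ``virtual'' to ``honest'' does not follow formally from the definitions, and ruling out one-ended virtual annuli requires a genuine input from the structure theory of semi-affinoid curves. Indeed, Examples~\ref{example_disco_stronzo} and~\ref{example_virtual_annulus_oneend} produce virtual discs and one-ended virtual annuli whose canonical models have \emph{non}-reduced special fiber — which is exactly what is being excluded here by hypothesis $(iii)$ — so the content is that reducedness of the (normal) canonical model forces the ring to be $R[[T]]$ or $R[[S,T]]/(ST-\pi^n)$; in the tame case this is covered by the classifications recalled in Examples~\ref{example_Ducros2013} and~\ref{example_FantiniTurchetti2018}, and the wild case is handled by the local analysis sketched above. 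A shorter but less self-contained route for $(iii)\Rightarrow(i)$ is to invoke \ref{theorem_main_effective}: under $(iii)$ the least common multiple $\lcm\{m(x)\mid x\in V_{\mathrm{min-tr}}\}$ equals $1$, so the minimal extension of $K$ over which $\frakC$ acquires semi-stable reduction is the unique totally ramified extension of degree $1$, namely $K$ itself.
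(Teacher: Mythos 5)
Your implication $(i)\Rightarrow(ii)$, together with the final sentence of the statement, is correct but follows a genuinely different route from the paper: you identify $V_{\mathrm{min-tr}}$ with the vertex set of the stable model by matching both against the nodes of $\anskC$ (using stability to see that every genus-zero vertex of $V_{\C_{\mathrm{st}}}$ has degree at least three and that no degree-one genus-zero vertices survive), whereas the paper keeps an arbitrary semi-stable vertex set $V\supseteq V_{\mathrm{min-tr}}$ and glues the pieces of each component of $C\setminus V_{\mathrm{min-tr}}$ back together with the Fusion Lemmas~\ref{lemmas_fusion}, excluding one-ended virtual annuli by the splitting function. Your version leans harder on Proposition~\ref{proposition_mintr_nodes} (in particular on $\Sigma(V_{\C_{\mathrm{st}}})=\anskC$, which you should justify via the pruning argument there), but it is a legitimate alternative and arguably more transparent.

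The implication $(iii)\Rightarrow(i)$, however, has a genuine gap, and it sits exactly where you flag ``the main obstacle.'' Your local claim --- that a normal two-dimensional complete local $R$-algebra, flat over $R$, with reduced special fiber having one analytic branch must be $R[[T]]$ --- is false: a reduced unibranch curve singularity need not be regular, and indeed $A=R[[x,y]]/(y^2-x^3-\pi)$ is regular (hence normal) and $R$-flat with reduced, irreducible, unibranch special fiber $k[[x,y]]/(y^2-x^3)$, yet $A\not\cong R[[T]]$; the analogous tacnodal example defeats the two-branch claim. Ruling such rings out requires precisely the global input that $X$ is a virtual disc or annulus (no positive-genus points, geometric contractibility, the right number of ends), and your sketch never actually brings that hypothesis to bear. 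You also only verify generic reducedness of $\C_k$ (multiplicity one gives $R_0$; the paper supplies $S_1$ via Serre's criterion, using that $\C_k$ is a Cartier divisor in the normal scheme $\C$). Finally, your proposed shortcut through Theorem~\ref{theorem_main_effective} is circular: the proof of Theorem~\ref{theorem_main} invokes exactly the implication $(iii)\Rightarrow(i)$ of this proposition. The paper's actual argument avoids all local analysis: base change $\C$ to a field $L$ over which $\frakC$ is semi-stable, observe that the special fiber does not change because it is already reduced, deduce that the base change is normal and is the model of the minimal triangulation $\tau^{-1}(V_{\mathrm{min-tr}})$ of $C_L$, hence the stable model of $C_L$ by the part already proven, and descend stability to $\C$ since it is a condition on the special fiber alone. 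You should replace your local classification with an argument of this type.
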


\begin{proof}
	Denote by $\C$ the model of $C$ associated with its minimal triangulation $V_\mathrm{min-tr}$.
	As discussed in \ref{point_semistable_models}, part $(ii)$ implies that $\C$ is semi-stable, and thus $(i)$ holds.
	The implication $(i)\implies(ii)$ follows from the fact that there exists a triangulation $V$ whose associated model is semi-stable, so that $V$ has to contain $V_{\mathrm{min-tr}}$ by minimality of the latter, and from the following claim: if $X$ is a virtual disc (or a virtual annulus) that can be decomposed as a disjoint union of finitely many type 2 points, finitely many annuli, and some discs, then $X$ is itself a disc (respectively an annulus).
	If $X$ is a virtual disc or a virtual annulus with two ends, the claim is a simple consequence of the Fusion Lemmas~\ref{lemmas_fusion}.
	On the other hand, if $X$ is a virtual annulus with one end admitting such a decomposition, there is an annulus $X' \subset X$ whose skeleton intersects the interior of the skeleton of $X$.
	This gives rise to a contradiction, since the splitting function $x\mapsto s(x) =  [\fraks(x):K]$ of \ref{point_splitting_function} is equal to 1 on the skeleton of the annulus $X'$, and is greater than 1 on the the interior of the skeleton of $X$, therefore proving the claim and thus establishing $(ii)$.
	In particular, this shows that if $\frakC$ admits semi-stable reduction over $K$ then $\C$ is a semi-stable model of $C$.
	Since the vertex set of any semi-stable model of $C$ is a triangulation, we deduce that $\C$ is the minimal semi-stable model of $C$, that is its stable model.
	Moreover, since no component of $C\setminus V_\mathrm{min-tr}$ is a virtual annulus with one end, $V_\mathrm{min-tr}$ is also a strong triangulation of $C$.
	The implication $(ii)\implies(iii)$ is immediate, since the boundary points of discs and annuli have multiplicity 1.
	In order to prove the implication $(iii)\implies(i)$, denote again by $\C$ the normal model of $C$ associated with $V_\mathrm{min-tr}$, let $L$ be a finite extension of $K$ such that $\frakC_L$ admits semi-stable reduction over $L$, and let $\C'$ be the base change of $\C$ to the valuation ring of $L$.
	Note that the special fiber $\C'_k$ of $\C'$ coincides with that the special fiber $\C_k$ of $\C$, since we have that $\C'_k=\calC'\times_{R'} k = \calC\times_R R' \times_{R'} k  =\calC_k$.
	To prove that $\C_k$ is reduced, we make use of Serre's criterion as follows.
	Since $m(x)=1$ for every $x$ in $V_\mathrm{min-tr}$, then by Remark \ref{remark_multiplicity_genus_base_change} the local ring at the generic point of every irreducible component of the special fiber is reduced, and therefore it is regular.
	In particular, the scheme $\C_k$ satisfies the property $R_0$.
	Moreover, since $\C_k$ is a Cartier divisor inside the model $\C$, and this model satisfies the property $S_2$ by normality, then $\C_k$ satisfies the property $S_1$.
	It follows that the special fiber $\C'_k$ of $\C'$, being isomorphic to $\C_k$, is reduced as well, and so $\C'$ is itself normal by~\cite[Lemma 4.1.18]{Liu2002}.
	We deduce that $\C'$ is the model of $C_L$ associated with the vertex set $\tau^{-1}(V_\mathrm{min-tr})$, which is the minimal triangulation of $C_L$ by Proposition~\ref{proposition_mintr_semistable_extension}, and therefore $\C'$ is the stable model of $C_L$ by the previous part of the proof.
	Since the definition of stable model only depends on the geometry of the special fiber, and since $\C_k\cong\C'_k$, we deduce that $\C$ is itself stable, proving $(i)$.
\end{proof}

\begin{proof}[Proof of Theorem~\ref{theorem_main}]
	By combining Proposition~\ref{proposition_mintr_semistable_extension} and Lemma~\ref{lemma_minimal_triangulation_basechange} we deduce that, if $x$ is a point of $V_{\mathrm{min-tr}}$ and $y$ is a point of $\tau^{-1}(x)$, then $m(y)=1$.
	Then Proposition~\ref{proposition_multiplicity_basechange} implies that $\gcd\{m(x),[L:K]\}$ must be equal to $m(x)$, that is $m(x)$ divides $[L:K]$, which proves $(i)$.
	We move now to the proof of part $(ii)$.
	Set $d=\lcm\{m(x)\,|\,x\in V_{\mathrm{min-tr}}\}$, and observe that it follows from $(i)$ that if $L$ can be taken to be tamely ramified over $K$ then $p$ does not divide $d$. 
	Conversely, assume that $p$ does not divide $d$, let $L'$ be the unique totally ramified extension of $K$ of degree $d$, and denote by $\tau'\colon C_{L'}\to C$ the corresponding base change morphism.
	Since $L'$ is tamely ramified over $K$, we deduce from Proposition~\ref{proposition_multiplicity_basechange} that $m(y)=1$ for every $y$ in $\tau'^{-1}(V_\mathrm{min-tr})$.
	But by Lemma~\ref{lemma_minimal_triangulation_basechange} the vertex set $\tau'^{-1}(V_\mathrm{min-tr})$ is the minimal triangulation of $C_{L'}$.
	It then follows again from Proposition~\ref{proposition_mintr_semistable_extension} that $\C_{L'}$ admits semi-stable reduction over $L'$, which is what we wanted to show.
\end{proof}

\pa{\label{point:lcm_equal_in_tame_case}
	Assume that, in the notation of Theorem~\ref{theorem_main}, the minimal extension $L$ of $K$ such that $\frakC_L$ has semi-stable reduction is tamely ramified. 
	Then, it follows immediately from~\ref{point_tame_one-ended_annuli} that the multiplicity of the bending point of any virtual annulus with one end among the components of $C\setminus V_\mathrm{min-tr}$ divides the degree $[L:K]$, which coincides with $\lcm\{m(x)\,|\,x\in V_{\mathrm{min-tr}}\}$ by Theorem~\ref{theorem_main_effective}.
	In particular, we deduce that
	\(
	\lcm\{m(x)\,|\,x\in V_{\mathrm{min-tr}}\} = \lcm\{m(x)\,|\,x\in V_{\mathrm{min-str}}\}\,.
	\)
}

\section{Snc models and vertex sets}
\label{section_snc}

In this section, we introduce models with strict normal crossings, and prove the main technical result required to relate minimal snc models and minimal triangulations.

\Pa{Snc vertex sets}{
A vertex set $V$ of $C$ is called \emph{snc} if the model $\C_V$ associated with $V$ via the correspondence of Theorem \ref{theorem_models-vertex-sets} is a \emph{snc model} of $C$, that is, it is regular and its special fiber has strict normal crossings.
}

\pa{
With a snc vertex set $V$ of $C$, one can canonically associate a skeleton $\Sigma(V)$ inside $C$ even when $V$ is not a triangulation.
Indeed, each formal fiber of a double point of the special fiber $(\C_V)_k$ still contains a line segment connecting its two ends as a skeleton.
This can be done in a very intrinsic way and in arbitrary dimension as in \cite[Section 3.1]{MustataNicaise2015}, where it is also shown that $C$ retracts by deformation onto the skeleton $\Sigma(V)$.
}

Under suitable tame assumptions, we can show that a formal fiber of a snc model is a virtual disc or a virtual annulus.

\begin{proposition}\label{proposition_formalfibers}
	Let $\C$ be a snc model of $C$ and let $P$ be a closed point of the special fiber $\C_k$ of $\C$.
	Then:
	\begin{enumerate}
		\item if $P$ belongs to a unique irreducible component of $\C_k$ whose multiplicity $m$ is not divisible by $p$, then $\Sp_{\C}^{-1}(P)$ is a generalized fractional disc with field of constants $K(\pi^{\sfrac{1}{m}})$;
		\item if $P$ is a double point belonging to two irreducible components of $\C_k$ of multiplicities $m_1$ and $m_2$ and if $p$ does not divide the greatest common divisor $m=\gcd\{m_1, m_2\}$ of $m_1$ and $m_2$, then $\Sp_{\C}^{-1}(P)$ is a generalized fractional annulus with field of constants $K(\pi^{\sfrac{1}{m}})$.
	\end{enumerate}
\end{proposition}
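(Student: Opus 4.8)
I would reduce this to an explicit local computation on the completed local ring of $\C$ at $P$, making use of the snc hypothesis and of the assumptions on multiplicities. Let me describe the two cases.

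First, the \emph{smooth case (i)}: assume $P$ lies on a single component $E$ of $\C_k$, of multiplicity $m$ with $p\nmid m$. Since $\C$ is regular and $E$ is the only component through $P$, the completed local ring $\widehat{\calO_{\C,P}}$ is isomorphic to $R[[u,v]]$ in such a way that the special fiber is cut out by $\pi$, and $\pi = c\,u^{m}$ for a unit $c\in R[[u,v]]^\times$ (the exponent being exactly the multiplicity of $E$ at $P$, and no other branch appearing because $P$ is not a crossing point). After absorbing part of the unit we may arrange $\pi = w^m(1+h)$ with $h$ in the maximal ideal, and since $p\nmid m$ the binomial series lets us extract an $m$-th root of $1+h$, so that $\widehat{\calO_{\C,P}}\cong R[[S,T]]/(S^m-\pi)$ after a coordinate change. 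By the identification of formal fibers with semi-affinoid spaces coming from completed local rings (see \ref{point_formal_fibers}), $\Sp_\C^{-1}(P)$ is the semi-affinoid space with this special algebra. One then recognizes, exactly as in Example~\ref{example:virtual_several_fractional_discs} and the surrounding discussion, that this space has field of constants $K(\pi^{\sfrac1m})$ and becomes a disjoint union of $m$ discs of radius $|\pi|^{1/m}$ over that field; hence it is a generalized fractional disc with the asserted field of constants.

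Second, the \emph{double point case (ii)}: here $P$ lies on two components $E_1,E_2$ of multiplicities $m_1,m_2$, and regularity of $\C$ together with the snc condition give $\widehat{\calO_{\C,P}}\cong R[[u,v]]$ with $\pi = c\,u^{m_1}v^{m_2}$, $c$ a unit. Writing $m=\gcd(m_1,m_2)$ and $m_i = m\,m_i'$ with $\gcd(m_1',m_2')=1$, and using $p\nmid m$ (so that an $m$-th root of the unit part can again be extracted by the binomial series), one transforms this into $R[[S,T]]/(S^{m_1}T^{m_2}-\pi^{?})$ up to a normalization step; passing to the field of constants $K(\pi^{1/m})$ and choosing a uniformizer $\varpi$ with $\varpi^m=\pi$, the equation becomes (after the standard substitution using coprimality of $m_1',m_2'$, i.e. Bézout) of the form $S'^{m}T'^{m}=\varpi^{m}\cdot(\text{unit})$ splitting into $m$ pieces each isomorphic to $R'[[S,T]]/(ST-\varpi^{N})$ for a suitable $N$. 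That is an annulus over $K(\pi^{1/m})$, so $\Sp_\C^{-1}(P)$ is a generalized fractional annulus with field of constants $K(\pi^{1/m})$. Throughout, the criterion of Proposition~\ref{proposition_Ducros-criterion} can be invoked as a sanity check: the formal fiber has no positive-genus type 2 points (being a formal fiber on a regular model with genus-zero components after base change), its geometric components are contractible, and they have one (resp.\ two) ends.

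\textbf{The main obstacle.} The delicate point is the bookkeeping in case (ii): turning the local equation $\pi = u^{m_1}v^{m_2}\cdot(\text{unit})$ into an honest fractional annulus requires (a) clearing the unit via an $m$-th root, which is where $p\nmid m$ is essential — this step genuinely fails in the wild case; (b) a change of coordinates using that $m_1',m_2'$ are coprime to reduce the exponents, which is the analogue of resolving a cyclic quotient singularity and must be done carefully to identify both the number of connected components over $K(\pi^{1/m})$ and the modulus of each resulting annulus; and (c) checking that after base change to $K(\pi^{1/m})$ one really lands on the canonical model, i.e. that a normalization does not introduce further components — this is handled by the compatibility of formal fibers with base change and normalization recorded after Theorem~\ref{theorem_models-vertex-sets}. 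I expect (b), the coprimality coordinate change together with the identification of the field of constants, to be where most of the care is needed; the rest is either the binomial-series extraction of roots (routine once $p\nmid m$) or a direct appeal to the structure results on semi-affinoid curves already in the paper.
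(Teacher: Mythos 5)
Your reduction to the normal form $\widehat{\calO_{\C,P}}\cong R[[S,T]]/(S^m-\pi)$, resp. $R[[S,T]]/(S^{m_1}T^{m_2}-\pi)$, is exactly the paper's first step (the paper quotes \cite[Lemma 2.3.2]{ConradEdixhovenStein2003} where you extract roots of the unit by hand; your use of $p\nmid m$ for the binomial series is the right place for the tameness hypothesis), and case (i) then matches the paper's identification with Example~\ref{example:virtual_several_fractional_discs}. The divergence, and the problem, is in your final identification in case (ii). You claim that over $K(\pi^{\sfrac1m})$ the $m$ components become isomorphic to $R'[[S,T]]/(ST-\varpi^N)$, i.e.\ honest annuli. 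This is false unless $m_1=m_2$: the algebra $R'[[S,T]]/(S^{m_1'}T^{m_2'}-\varpi)$ is regular, whereas $R'[[S,T]]/(ST-\varpi^N)$ is singular for $N>1$, so no change of coordinates in the formal power series ring can identify them; and even at the level of generic fibers the two ends have multiplicities $m_1'$ and $m_2'$, which are intrinsic invariants, so the space cannot be an annulus (whose ends have multiplicity $1$) when $(m_1',m_2')\neq(1,1)$. The Bézout monomial substitution you invoke is only defined on the generic fiber (where $S$ and $T$ are invertible), not on the formal model, and when carried out correctly it produces a \emph{fractional} annulus $\{\rho_1<|U|<\rho_2\}$ over $K(\pi^{\sfrac1m})$ with $\rho_i\in\sqrt{|K^\times|}$ generally irrational — which is in fact exactly what the statement "generalized fractional annulus" requires, but is not what you wrote. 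As stated, your intermediate claim would imply that $\Sp_\C^{-1}(P)$ is already trivialized by $K(\pi^{\sfrac1m})$, which is wrong whenever $m_1'\,m_2'>1$.

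The paper sidesteps this computation entirely: having observed that $\varpi=S^{m_1'}T^{m_2'}$ is a constant, so that $X_\fraks$ is the semi-affinoid space of the regular $R'$-algebra $R'[[S,T]]/(S^{m_1'}T^{m_2'}-\varpi)$, it notes that this is the \emph{unique} regular special algebra whose generic fiber has two ends of multiplicities $m_1'$ and $m_2'$, and then appeals to Corollary~\ref{corollary_existence_regular_fractional_annuli_coprime_multiplicities} for the \emph{existence} of a regular fractional annulus with those end multiplicities; uniqueness forces $X_\fraks$ to coincide with it. If you prefer your explicit route, you must replace "$R'[[S,T]]/(ST-\varpi^N)$" by a correctly normalized fractional annulus with radii having denominators $m_1'$ and $m_2'$ and track the constraints $|S|<1$, $|T|<1$ through the unimodular monomial change of variables on the generic fiber (the computation is essentially \cite[\S7]{FantiniTurchetti2018}); as written, the step is not correct.
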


\begin{proof}
Denote by $X$ the formal fiber $\Sp_{\C}^{-1}(P)$.
It is the semi-affinoid space generic fiber associated with the formal spectrum of the ring $\widehat{\calO_{\C, P}}$, which is the completion of the local ring of $\C$ at the point $P$.
A simple deformation-theoretic argument (see~\cite[Lemma 2.3.2]{ConradEdixhovenStein2003}, whose proof carries through in our case despite the fact that in the statement both multiplicities are supposed to be not divisible by $p$) shows that $\widehat{\calO_{\C, P}}$ is isomorphic to $R[[S,T]]/(S^m-\pi)$ in case $(i)$, and to $A\cong R[[S,T]]/(S^{m_1}T^{m_2}-\pi)$ in case $(ii)$.
In the first case, this shows that $X$ is precisely the virtual disc described in Example~\ref{example:virtual_several_fractional_discs}.
In the second case, observe that the field of constants $\s(X)$ of $X$ contains a $m$-th root  $\varpi=\pi^{\sfrac{1}{m}}$ of $\pi$, since if we write $m_i'=m_i/m$ for $i=1,2$, then $(S^{m_1'}T^{m_2'})^m=\pi$.
Then $X_\s$ is the semi-affinoid space associated with the special $R'$-algebra $R'[[S,T]]/(S^{m_1'}T^{m_2'}-\varpi)$, where we set $R'=R[\varpi]$.
In particular, we deduce that $R'[[S,T]]/(S^{m_1'}T^{m_2'}-\varpi)$ is the unique regular $R'$-algebra corresponding to a semi-affinoid space with two ends of multiplicities $m_1'$ and $m_2'$.
Since we prove in Corollary~\ref{corollary_existence_regular_fractional_annuli_coprime_multiplicities} that there exists a fractional annulus having these properties, it follows that $X_\s$ is a fractional annulus, which concludes the proof.
\end{proof}

Proposition \ref{proposition_formalfibers} enables us to say that the possible phenomena preventing a snc vertex set from being a strong triangulation are related to wild ramification.
One instance of this is the fact that type 2 points of positive genus whose associated residual curve is rational are either in the minimal snc vertex set or they have multiplicities divisible by the residue characteristic of $K$, as follows from the following lemma.

\begin{lemma}\label{lemma_curves_dreaming_of_having_positive_genus}
	Let $x$ be a type 2 point of $C$ such that $x$ belongs to the minimal strong triangulation of $C$ but not to its minimal snc vertex set.
	Then the residue characteristic $p$ of $K$ divides the multiplicity $m(x)$ of $x$.
\end{lemma}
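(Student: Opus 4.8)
The plan is to argue by contrapositive: assuming $p$ does not divide $m(x)$, I will show that $x$ already lies in the minimal snc vertex set of $C$, or at least that it cannot lie in $V_{\mathrm{min-str}}\setminus V_{\mathrm{min-snc}}$. The key structural input is that $x$ belongs to $V_{\mathrm{min-str}}$ but, by hypothesis, not to the minimal snc vertex set $V_{\mathrm{min-snc}}$; in particular $x$ is \emph{not} a vertex of the snc model, so it lies strictly inside a formal fiber $\Sp_{\C}^{-1}(P)$ of the minimal snc model $\C$, for some closed point $P$ of $\C_k$. I would first observe that, since $x$ is a type 2 point lying in the interior of a formal fiber, the point $P$ cannot be a smooth point of a component of $\C_k$ of multiplicity prime to $p$ nor a double point with $p\nmid\gcd(m_1,m_2)$, because Proposition~\ref{proposition_formalfibers} would then force $\Sp_{\C}^{-1}(P)$ to be a generalized fractional disc or annulus, whose only type 2 point of positive genus would have to be absent (fractional discs and annuli contain no type 2 point of positive genus, and after a tame base change they become genuine discs/annuli).

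Next, I would use the additional hypothesis $p\nmid m(x)$ together with Proposition~\ref{proposition_multiplicity_basechange}: passing to the (tamely ramified) totally ramified extension $L$ of $K$ of degree $m(x)$, the preimage $y$ of $x$ has multiplicity $m(y)=m(x)/\gcd(m(x),m(x))=1$. So over $L$ the point $y$ is a type 2 point of multiplicity one. Now I want to detect its genus. By Lemma~\ref{lemma_minimal_triangulation_basechange} the preimage $\tau^{-1}(V_{\mathrm{min-tr}})$ is the minimal triangulation of $C_L$, and $\tau^{-1}(V_{\mathrm{min-str}})$ contains the minimal strong triangulation of $C_L$; in particular $y$ lies on the analytic skeleton of $C_L$. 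Since $x\in V_{\mathrm{min-str}}$, the point $x$ is a node of $\anskC$ — and because we are assuming $x\notin V_{\mathrm{min-snc}}$, it cannot be a node for the "degree $\neq 2$" or the "splitting discontinuity" reason (those are exactly the reasons that force the point into the snc vertex set, as the formal fiber would otherwise be a fractional disc/annulus as above); hence $x$ must be a node because it has positive genus, i.e. $g(x)>0$.

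So the crux reduces to: a type 2 point $x$ with $g(x)>0$ and $p\nmid m(x)$ must belong to the minimal snc vertex set $V_{\mathrm{min-snc}}$. To see this, I would use Remark~\ref{remark_multiplicity_genus_base_change}: for the snc model $\C$, the residue curve $\overline{\Sp_\C(x)}$ has function field $\widetilde{\mathscr H(x)}$, and over the tame extension $L$ the residue curve of $y$ has function field the degree-$[\widetilde{\mathscr H(y)}:\widetilde{\mathscr H(x)}]$ extension; since $L|K$ is tame and $p\nmid m(x)$, the extension of residue fields at $x$ is separable of degree prime to $p$, so $\overline{\Sp_\C(x)}$ and the residue curve over $L$ have the \emph{same} geometric genus, namely $g(x)>0$. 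If $x$ were not a vertex of $\C$, it would lie strictly inside a formal fiber $\Sp_\C^{-1}(P)$; but the formal fiber of a closed point of the special fiber of a \emph{regular} model has a residue curve structure in which every type 2 point interior to the fiber arises from a chain of point blowups, hence is rational — contradicting $g(x)>0$. The main obstacle I expect is making precise this last step — that a positive-genus type 2 point can never sit strictly inside a formal fiber of a regular model — which requires the (standard, but slightly technical) fact that point blowups of a regular surface only introduce rational exceptional curves, combined with the genus-preservation under the tame base change; once that is in place the contrapositive closes immediately.
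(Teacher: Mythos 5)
Your opening observation is correct and reproduces the first half of the paper's argument: since $x\in V_{\mathrm{min-str}}$ it is not contained in any virtual disc and has no neighborhood isomorphic to a virtual annulus with two ends (Proposition~\ref{proposition_mintr_nodes}), so Proposition~\ref{proposition_formalfibers} forces every component of $\C_k$ through the closed point $P$ with $x\in\Sp_\C^{-1}(P)$ to have multiplicity divisible by $p$. But at that point the proposal abandons the multiplicities and the argument never closes. The paper finishes with one step that is entirely missing from your plan: the minimal snc model whose vertex set contains $x$ is obtained from $\C$ by a sequence of point blowups above $P$, and the multiplicity of each new exceptional component in the special fiber equals the multiplicity of the special fiber at the blown-up point, hence is divisible by $p$ as soon as all components through that point have multiplicity divisible by $p$ (\cite[Proposition 9.1.21.(b) and Theorem 9.3.8]{Liu2002}). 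Induction along the tower of blowups then gives $p\mid m(x)$ directly. This purely numerical blowup computation is the substance of the lemma, and nothing in your proposal substitutes for it.

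The route you take instead has two genuine gaps. First, the reduction to the case $g(x)>0$: you assert that a node of $\anskC$ for the ``degree'' or ``splitting discontinuity'' reasons is automatically forced into $V_{\mathrm{min-snc}}$, but this is nowhere justified --- under the hypothesis $p\nmid m(x)$ it is precisely (a sub-case of) what the lemma claims, and without that hypothesis it is false, so the parenthetical ``those are exactly the reasons that force the point into the snc vertex set'' is circular. Second, the final contradiction ``a positive-genus type~2 point cannot sit strictly inside a formal fiber of a regular model'' conflates the Berkovich genus $g(x)$ with the genus of the residue curve $\overline{\Sp_\C(x)}$: as Remark~\ref{remark_multiplicity_genus_base_change} and Example~\ref{example:wild_bad_cusp} make explicit, a point with $g(x)>0$ can correspond to a rational exceptional curve in every $R$-model, the genus appearing only after base change. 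Your tameness argument controls the residue field only along $K\subset L$ with $[L:K]=m(x)$, not along the further, possibly wildly ramified, extension $L\subset\widehat{K^{\mathrm{alg}}}$ that enters the definition of $g(x)$, so the ``exceptional curves are rational'' contradiction does not materialize. I recommend returning to your own first observation and completing it with the blowup multiplicity formula.
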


\begin{proof}
Let $\calC$ be the minimal snc model of $C$ and let $P$ be the closed point of the special fiber $\calC_k$ of $\C$ such that $x$ belongs to the formal fiber $\Sp_\C^{-1}(P)$ of $\calC$ over $P$.
Since $x$ is an element of the minimal strong triangulation of $C$, it cannot be contained in any virtual disc or virtual annulus with two ends, hence by Proposition~\ref{proposition_formalfibers} all the components of $\calC_k$ passing through $Q$ must have multiplicity divisible by $p$.
The minimal snc model of $C$ whose vertex set contains $x$ is obtained by a sequence of blowups of closed points of $\calC$ above $Q$.
In particular, the exceptional component of every such blowup has to have multiplicity divisible by $p$ (see \cite[Proposition 9.1.21.(b) and Theorem 9.3.8]{Liu2002}), hence this holds for the component corresponding to $x$, so that $p$ divides $m(x)$.
\end{proof}

\begin{remark}\label{remark_curves_dreaming_of_having_positive_genus}
	Assume that $x$ is point of type 2 in $C$ such that $\widetilde{\mathscr H(x)}$ has positive geometric genus, which (as observed in Remark~\ref{remark_multiplicity_genus_base_change}) means that the genus of the component $E_x$ corresponding to $x$ in the special fiber of any regular model $\C$ of $C$ such that $x\in V_\C$ is positive.
	Then by Castelnuovo criterion $E_x$ cannot be contracted without introducing singularities of the model.
	This shows that $x$ has to be contained in $V_\mathrm{min-snc}$.
	On the other hand, the points studied in the lemma above contain an interesting special class of residual curves, those that have genus zero but acquire positive genus after some normalized base change.
	The existence of such curves is part of what makes the study of semi-stable reduction in the wild case particularly complicated.
	We will discuss this further in Section~\ref{section_wild} (see Example~\ref{example:wild_bad_cusp}).
\end{remark}

\section{The tamely ramified case}

Let $C$ be a curve that acquires semi-stable reduction after a tamely ramified extension of $K$.
In this section we describe explicitly the minimal strong triangulation of $C$ in terms of the combinatorics of its minimal snc model.
This is related to a classical result of T. Saito.

\pa{\label{point_def_principal}
As before, $C$ is the analytification of a proper, smooth algebraic $K$-curve $\mathfrak C$ of genus $g(\frakC)>1$.
Denote by $V_\mathrm{min-snc}$ the vertex set of $C$ associated with its minimal snc model.
Given a snc vertex set $V$ of $C$, we say that a point $x \in V$ is \emph{principal} if it satisfies either one of the following conditions:
\begin{enumerate}
\item the genus $g(x)$ of $x$ is positive;
\item the degree $\deg(x)$ of $x$ in the graph $\Sigma(V)$ is at least three;
\end{enumerate}
Given a snc vertex set $V$ of $C$, we call \emph{principalization} of $V$ the vertex set consisting of its principal points, and we denote it by $V_\mathrm{pr}$.
The fact that $V_\mathrm{pr}$ is nonempty follows from the argument already used in the course of the proof of Proposition~\ref{proposition_mintr_nodes}: since $g(\frakC)>1$ then either $C$ has a point or positive genus or it contains at least two loops, in which case $\Sigma(V)$ has at least a vertex of degree at least three by the genus formula recalled in~\ref{remark_genus_formula}.
}

\begin{remark}
	Observe that this definition of a principal vertex of $V$ differs from the usual definition of a principal component of the special fiber of the snc model $\C_V$ associated with $V$ (as for example given in \cite[Definition 6.1]{Halle2010}), because as we explained in Remark~\ref{remark_curves_dreaming_of_having_positive_genus} some type 2 points could have positive genus even when the associated residual curve is rational.
	Nevertheless, $V_\mathrm{pr}$ always contains the points associated with the principal components of $(\C_V)_k$.
\end{remark}

\pa{\label{point_intersection_multiplicities_chains}
	Let $V$ be a snc vertex set of $C$, let $X$ be a connected component of $C\setminus V_\mathrm{pr}$, and let $x_0\in V_\mathrm{pr}$ be a boundary point of $X$.
	Then, by the definition of $V_\mathrm{pr}$, the intersection $\overline{X}\cap \Sigma(V)$ is a union of $r$ adjacent edges $[x_0,x_1], [x_1,x_2], \ldots, [x_{r-1},x_r]$ of the skeleton $\Sigma(V)$ associated with $V$, for some $r\geqslant0$.
	Two cases are possible: either $x_r$ is a point of $V_\pr$, in which case $\partial X=\{x_0,x_r\}$ and $\overline{X}\cap \Sigma(V)=\overline{X}\cap \Sigma(V_\mathrm{pr})$ is an edge $[x_0,x_r]$ of $\Sigma(V_\mathrm{pr})$, or $\partial X=\overline{X}\cap \Sigma(V_\mathrm{pr})=\{x_0\}$.
	Let $\C_V$ be the snc model of $C$ associated with the snc vertex set $V$ and denote by $E_i$ the irreducible component of the special fiber $(\C_V)_k$ of $\C_V$ associated with $x_i$ and by $m_i$ the multiplicity of $E_i$ in $(\C_V)_k$.
	Then, for every $j=1,\ldots,r-1$, a standard intersection theoretic computation yields
	\[
	0 = \C_k \cdot E_j = \sum_{0\leqslant i\leqslant r} m_iE_i\cdot E_j
	=
	m_{j-1}+m_jE_j^2+m_{j+1}.
	\]
	We deduce that for every $j=0,\ldots,r-1$ we have
	\[
	\gcd\{m_j,m_{j+1}\}=\gcd\{m_0,\ldots,m_r\}.
	\]
	Moreover, if we are in the second case above, that is if $x_r$ is not a principal vertex of $V$, then we have $0 = (\C_V)_k \cdot E_r = m_{r-1}+m_rE_r^2$, and therefore $\gcd\{m_0,\ldots,m_r\}=m_r$.
}

\begin{theorem}\label{theorem_min-snc-triangulation}
	Assume that 
	the minimal extension $L$ of $K$ such that $C$ acquires semi-stable reduction over $L$ is tamely ramified.
	Then the following hold:
	\begin{enumerate}
		\item \label{theorem_min-snc-triangulation_one} the minimal snc vertex set $V_\mathrm{min-snc}$ of $C$ is a strong triangulation of $C$;
		\item \label{theorem_min-snc-triangulation_two} the principalization $(V_\mathrm{min-snc})_\mathrm{pr}$ of $V_\mathrm{min-snc}$ is the minimal strong triangulation of $C$.
	\end{enumerate}
\end{theorem}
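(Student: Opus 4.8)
The plan is to derive both assertions from Proposition~\ref{proposition_formalfibers}, Theorem~\ref{theorem_models-vertex-sets}, the intersection computation of~\ref{point_intersection_multiplicities_chains}, and the Fusion Lemmas~\ref{lemmas_fusion}. The first order of business is the auxiliary claim that \emph{every irreducible component of $(\C_\mathrm{min-snc})_k$ has multiplicity prime to $p$}. To see this, let $L$ be the minimal extension over which $\frakC$ acquires semi-stable reduction, tamely ramified by hypothesis, write $\tau\colon C_L\to C$ for the base change morphism and $V_\mathrm{st}\subset C_L$ for the vertex set of the stable model of $\frakC_L$. The complement of the vertex set $W=\tau^{-1}(V_\mathrm{min-snc})\cup V_\mathrm{st}$ in $C_L$ is obtained from the discs and annuli making up $C_L\setminus V_\mathrm{st}$ by deleting finitely many type~$2$ points, and removing type~$2$ points from a disc or an annulus leaves a disjoint union of discs and annuli; hence $C_L\setminus W$ is a disjoint union of discs and annuli, so by~\ref{point_semistable_models} the model $\C_W$ is semi-stable and $m(y)=1$ for all $y\in W$. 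For $x\in V_\mathrm{min-snc}$ and $y\in\tau^{-1}(x)\subseteq W$, Proposition~\ref{proposition_multiplicity_basechange} in the tamely ramified case gives $m(x)=\gcd\bigl(m(x),[L:K]\bigr)\cdot m(y)=\gcd\bigl(m(x),[L:K]\bigr)$, so $m(x)$ divides $[L:K]$ and is therefore prime to $p$.

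Part~\ref{theorem_min-snc-triangulation_one} then follows quickly. By Theorem~\ref{theorem_models-vertex-sets} the connected components of $C\setminus V_\mathrm{min-snc}$ are precisely the formal fibres $\Sp_{\C_\mathrm{min-snc}}^{-1}(P)$ of the closed points $P$ of $(\C_\mathrm{min-snc})_k$; since the special fibre has strict normal crossings, each such $P$ lies on one or two components, all of multiplicity prime to $p$ by the claim above, so Proposition~\ref{proposition_formalfibers} shows that $\Sp_{\C_\mathrm{min-snc}}^{-1}(P)$ is a generalized fractional disc or a generalized fractional annulus, necessarily with two ends. Thus $V_\mathrm{min-snc}$ is a strong triangulation of $C$.

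For part~\ref{theorem_min-snc-triangulation_two}, the inclusion $V_\mathrm{min-str}\subseteq V_\mathrm{min-snc}$ is immediate from part~\ref{theorem_min-snc-triangulation_one} and the minimality of $V_\mathrm{min-str}$, and the inclusion $(V_\mathrm{min-snc})_\mathrm{pr}\subseteq V_\mathrm{min-str}$ is routine, a point of positive genus lying in every triangulation and a point of degree at least three in $\Sigma(V_\mathrm{min-snc})$ being a branch point of $\anskC$ and hence a node. So it remains to prove that $(V_\mathrm{min-snc})_\mathrm{pr}$ is again a strong triangulation, for then it must contain $V_\mathrm{min-str}$ and we are done. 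I would do this by deleting the non-principal points of $V_\mathrm{min-snc}$ one at a time, checking each deletion with the Fusion Lemmas. If $x\in V_\mathrm{min-snc}$ is non-principal, so $g(x)=0$ and $x$ has degree $\le 2$ in $\Sigma(V_\mathrm{min-snc})$, let $E_x$ be the corresponding component of $(\C_\mathrm{min-snc})_k$ and consider the maximal chain of non-principal components of $(\C_\mathrm{min-snc})_k$ through $E_x$: by the intersection-theoretic computation of~\ref{point_intersection_multiplicities_chains} the greatest common divisor of the multiplicities of two consecutive components of this chain is constant along it, so by Proposition~\ref{proposition_formalfibers} the virtual annuli among the components of $C\setminus V_\mathrm{min-snc}$ adjacent to $x$ all have the same field of constants $K(\pi^{\sfrac{1}{m}})$, which also equals $\fraks(x)$ because $\fraks$ is constant along the skeleton of a virtual annulus. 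When $x$ has degree $2$ this is exactly the hypothesis of Fusion Lemma~\ref{lemma_fusion_annulus}, and deleting $x$ merges the two adjacent virtual annuli (and the intervening virtual discs) into one virtual annulus with two ends; when $x$ has degree $1$ one argues similarly, producing the auxiliary two-ended virtual annulus required by Fusion Lemma~\ref{lemma_fusion_disc} from the explicit shape of the formal fibre (equivalently, checking that the union of the unique adjacent annulus, the point $x$, and all adjacent virtual discs becomes a disc after the base change to $K(\pi^{\sfrac{1}{m}})$). Iterating, one reaches $(V_\mathrm{min-snc})_\mathrm{pr}$, which is therefore a strong triangulation; and if the minimal ss extension is tame, feeding this back into Theorem~\ref{theorem_main}\eqref{theorem_main_effective} and \ref{point_tame_one-ended_annuli}--\ref{point:lcm_equal_in_tame_case} yields Halle's effective form of Saito's criterion.

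The step I expect to be the main obstacle is justifying that every non-principal point of $V_\mathrm{min-snc}$ really can be deleted, i.e.\ that no node of $\anskC$ is a genus-$0$ vertex of degree $\le 2$ in $\Sigma(V_\mathrm{min-snc})$. A node of degree $2$ automatically has positive genus (the splitting function is continuous at a degree-$2$ vertex, the two adjacent virtual annuli sharing its field of constants), so the delicate case is a degree-$1$ node, i.e.\ a bending point of a one-ended virtual annulus, which by Lemma~\ref{lemma_curves_dreaming_of_having_positive_genus} does belong to $V_\mathrm{min-snc}$ in the tame case. One must show such a point has degree at least three in $\Sigma(V_\mathrm{min-snc})$ and is hence principal after all: passing to the tame extension $L$ trivializing the annulus, the associated residual curve carries a non-trivial involution of $\PP^1_k$ with two fixed points, and a local computation with the uniformizer (using that $\sigma(\pi_L)=-\pi_L$ and that reduction commutes with $\sigma$) shows the induced action on the conormal direction has sign $-1$ at each fixed point, so the quotient acquires an $A_1$-singularity there; resolving these two singularities to reach the minimal snc model of $C$ adds two further components meeting $E_x$, bringing its degree up to (at least) three. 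Once this is established the deletion procedure above applies to every non-principal point, completing the proof of part~\ref{theorem_min-snc-triangulation_two}.
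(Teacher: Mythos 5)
Your overall architecture (formal fibres via Proposition~\ref{proposition_formalfibers}, the chain computation of~\ref{point_intersection_multiplicities_chains}, deletion of non-principal vertices via the Fusion Lemmas) matches the paper's, and your deletion procedure for part~(ii) is essentially the published one. But the auxiliary claim on which you rest part~(i) --- that \emph{every} component of $(\C_\mathrm{min-snc})_k$ has multiplicity prime to $p$, indeed dividing $[L:K]$ --- is false. A chain of rational $(-2)$-curves of multiplicities $6,5,4,3,2,1$ attached to a principal component of multiplicity $6$ satisfies the relations of~\ref{point_intersection_multiplicities_chains} and does occur in minimal snc models (it is the $\mathrm{II}^\star$ configuration, with genus~$\geqslant 2$ analogues); in residue characteristic $5$ the semi-stable extension is tame of degree $6$, yet a component has multiplicity $5$. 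Saito's criterion constrains only principal components, terminal components of chains, and gcd's of intersecting components. The flaw in your proof of the claim is the assertion that deleting finitely many type~$2$ points from a disc or annulus over $L$ leaves discs and annuli: that holds only when the deleted points have multiplicity $1$ over $L$ --- which is what you are trying to prove. A point $y\in\tau^{-1}(x)$ with $m(y)>1$ cuts a disc into fractional discs and fractional annuli, so $\C_W$ need not be semi-stable. As a result, your part~(i) gives no argument for the formal fibre of a smooth point lying on a component of multiplicity divisible by $p$. The paper first establishes $V_\mathrm{min-str}\subseteq V_\mathrm{min-snc}$ (from Theorem~\ref{theorem_main_effective}, \ref{point:lcm_equal_in_tame_case} and Lemma~\ref{lemma_curves_dreaming_of_having_positive_genus}, not as a corollary of~(i)); such a formal fibre then lies inside a component of $C\setminus V_\mathrm{min-str}$ and is a virtual disc by Proposition~\ref{proposition_Ducros-criterion}, while for double points only $\gcd(m_1,m_2)$ need be prime to $p$, which follows from the chain relations once one knows $(V_\mathrm{min-snc})_\mathrm{pr}\subseteq V_\mathrm{min-str}$.

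That last inclusion is not ``routine'', and your justification is incorrect: a genus-$0$ vertex of degree $\geqslant 3$ in $\Sigma(V_\mathrm{min-snc})$ need not be a branch point of $\anskC$, since $\Sigma(V_\mathrm{min-snc})$ is in general strictly larger than the analytic skeleton (edges leading into virtual discs disappear), and Section~\ref{section_wild} exhibits exactly such principal points outside $V_\mathrm{min-str}$ in the wild case. Ruling this out under the tameness hypothesis is a substantive step: the paper uses the classification of tamely trivialized virtual discs and two-ended annuli as generalized fractional ones together with the resolution analysis of Appendix~\ref{section_regularity_line} (see~\ref{point_explicit_resolution_fractional_discs_and_annuli}), which shows that resolving a fractional disc or annulus creates no principal vertices. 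Finally, your closing worry about degree-$1$ nodes is legitimate but is handled more economically in the paper: the chain/Fusion-Lemma argument shows that every component of $C\setminus(V_\mathrm{min-snc})_\mathrm{pr}$ with a single principal boundary point is a virtual \emph{disc}, so no one-ended virtual annulus with non-principal bending point can arise.
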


\begin{proof}
	By Theorem~\ref{theorem_main_effective} and \ref{point:lcm_equal_in_tame_case} we know that $p$ does not divide the multiplicity of any element of the minimal strong triangulation $V_\mathrm{min-str}$ of $C$. 
	It then follows from Lemma~\ref{lemma_curves_dreaming_of_having_positive_genus} that $V_\mathrm{min-str}$ is contained in $V_\mathrm{min-snc}$.
	Now let $X$ be a connected component of $C\setminus V_\mathrm{min-str}$.
	Observe that the $\fraks(X)$-analytic space $X_{\fraks}$ defined in \ref{definition_space_over_field_of_constants} is a form of a disc or of an annulus with two ends trivialized by the tame extension $L|\fraks(X)$; it follows from \cite[Theorem 8.1]{FantiniTurchetti2018} that $X_{\fraks}$ is a fractional disc or a fractional annulus.
	It follows from the discussion of \ref{point_explicit_resolution_fractional_discs_and_annuli} that no principal point of the vertex set of the minimal snc model of $X_\s$ is contained in $X_\s$.
	This shows that $(V_\mathrm{min-snc})_\mathrm{pr}$ is contained in $V_\mathrm{min-str}$.
Now, in order to prove $(i)$, let $X$ be a connected component of $C\setminus V_\mathrm{min-snc}$.
Since $V_\mathrm{min-snc}$ is associated with a snc model $\C$ of $C$, the boundary $\partial X$ of $X$ has either one or two points.
In the first case, since $X$ does not contain any point of $V_\mathrm{min-str}$, it is contained in some virtual disc and therefore satisfies the hypotheses of Proposition~\ref{proposition_Ducros-criterion}, so that $X$ is itself a virtual disc.
In the second case, let $m_1$ and $m_2$ be the multiplicities of the two points of $\partial X$.
By \ref{point_intersection_multiplicities_chains} the greatest common divisor $\gcd\{m_1,m_2\}$ of $m_1$ and $m_2$ is not divisible by $p$, since otherwise we would find a point of $(V_\mathrm{min-snc})_\mathrm{pr}$ whose multiplicity is a multiple of $p$, which is not possible as we have shown that $(V_\mathrm{min-snc})_\mathrm{pr}\subset V_\mathrm{min-str}$.
Then $X$ is a generalized fractional annulus by Proposition~\ref{proposition_formalfibers}.
This proves that $V_\mathrm{min-snc}$ is a strong triangulation, showing $(i)$.
In order to prove $(ii)$, it remains to show that $(V_\mathrm{min-snc})_\mathrm{pr}$ is a strong triangulation of $C$.
Let $X$ be a connected component of $C\setminus(V_\mathrm{min-snc})_\mathrm{pr}$ and let $x_0\in (V_\mathrm{min-snc})_\mathrm{pr}$ be a boundary point of $X$.
Following the notation of \ref{point_intersection_multiplicities_chains} we write $\overline X\cap V_\mathrm{min-snc}=\{x_0,\ldots,x_r\}$.
Without loss of generality we can assume that $r\geqslant1$, because otherwise $U$ is a connected component of $C\setminus V_\mathrm{min-snc}$.
For every $i=1,\ldots,r$, denote by $X_i$ the connected component of $C\setminus V_\mathrm{min-snc}$ such that $\partial X_i=\{x_{i-1},x_{i}\}$.
Then, again by \ref{point_intersection_multiplicities_chains}, we have that $\gcd\{m_{i-1},m_{i}\}$ equals $\gcd\{m_0,\ldots,m_r\}$ and is not divisible by $p$, hence all the generalized fractional annuli $X_i$ have field of constants $\fraks(X_i)$ isomorphic to the unique totally ramified extension of $K$ of degree $\gcd\{m_0,\ldots,m_r\}$.
Moreover, we also have $\s(x_{i})=\s(X_i)$.
This follows from the fact that $\s(x_i)$ is contained in $\s(X_i)$ and $\s(X_i)$ is contained in the field of constants of each formal fiber at a closed point of the component of $\C_k$ associated with $x_i$.
Indeed, two of these components are $X_i$ and $X_{i+1}$, while all the others are associated with a special $R$-algebra of the form $R[[S,T]]/(uS^{m_{i}}-\pi)$ (again by the simple deformation-theoretic argument of \cite[Lemma 2.3.2]{ConradEdixhovenStein2003}), for some unit $u$ of $R[[S,T]]$, so that they contain the constant $(uS^{m_i})^{1/\gcd\{m_i,m_{i+1}\}}$.
We can then apply $r$ times the seond Fusion Lemma~\ref{lemma_fusion_annulus} to deduce that the connected component $X'$ of $C\setminus\{x_0,x_r\}$ containing $X_1$ is a virtual annulus with two ends. 
If $X'=X$ then there is nothing left to prove.
If $X'$ is a proper subset of $X$, this means that we are in the second case treated in \ref{point_intersection_multiplicities_chains} and thus $p$ does not divide $m_r$.
Now, if we blowup once a smooth point of $E_r$ and we denote by $x_{r+1}$ the type 2 point in $X$ corresponding to the exceptional component of this blowup, we have that $m_{r+1}=m_r$ and as before the connected component $X''$ of $C\setminus\{x_0,x_{r+1}\}$ containing $X_0$ is a virtual annulus with two ends.
The connected component of $C\setminus\{x_r\}$ containing $x_{r+1}$ being a virtual disc, by the first Fusion Lemma \ref{lemma_fusion_disc} the component $X$ is itself a virtual disc.
This proves that $(V_\mathrm{min-snc})_\mathrm{pr}$ is a strong triangulation, which concludes the proof of $(ii)$.
\end{proof}

Observe that the tameness hypothesis on $L|K$ is necessary. 
An example where the theorem does not hold in the wildly ramified case will be given in \ref{point_elliptic-curves-wild-potentially-multiplicative}.

The following effective version of the criterion by Saito cited in the introduction, which was originally due to Halle \cite[Theorem 7.5]{Halle2010}, follows immediately from the combination of Theorems~\ref{theorem_main_effective} and \ref{theorem_min-snc-triangulation}.

\begin{corollary}\label{corollary_saito_effective}
	Assume that 
	the minimal extension $L$ of $K$ such that $C$ acquires semi-stable reduction is tamely ramified.
	Then we have
	\[
	[L:K]=\lcm\big\{m(x)\,\big|\,x\in (V_\mathrm{min-snc})_\mathrm{pr}\big\}.
	\]
\end{corollary}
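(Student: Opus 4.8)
The plan is to deduce the formula as a short chain of equalities from results already established, so the ``proof'' is really just an assembly. Write $d = \lcm\{m(x)\mid x\in V_{\mathrm{min\text{-}tr}}\}$ for the least common multiple of the multiplicities of the points of the minimal triangulation. Since by hypothesis the minimal extension $L$ over which $\frakC$ acquires semi-stable reduction is tamely ramified, Theorem~\ref{theorem_main_effective} applies: it identifies $L$ with the unique totally ramified extension of $K$ of degree exactly $d$, and in particular yields $[L:K]=d$. (Note that such an $L$ is automatically Galois over $K$ because $k$ is algebraically closed, so there is no discrepancy between ``the minimal extension'' of the corollary and ``the minimal Galois extension'' appearing in the statement of Theorem~\ref{theorem_main}.)

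Next I would invoke the observation of~\ref{point:lcm_equal_in_tame_case}, which is valid precisely in the tame case and equates $d$ with $\lcm\{m(x)\mid x\in V_{\mathrm{min\text{-}str}}\}$, the least common multiple taken instead over the minimal strong triangulation. Finally, part~\ref{theorem_min-snc-triangulation_two} of Theorem~\ref{theorem_min-snc-triangulation}, whose tameness hypothesis is exactly the one we are assuming, says that $V_{\mathrm{min\text{-}str}}$ coincides with the principalization $(V_{\mathrm{min\text{-}snc}})_\mathrm{pr}$ of the minimal snc vertex set. Stringing these three facts together gives
\[
[L:K] = d = \lcm\{m(x)\mid x\in V_{\mathrm{min\text{-}str}}\} = \lcm\{m(x)\mid x\in (V_{\mathrm{min\text{-}snc}})_\mathrm{pr}\},
\]
which is the assertion of the corollary.

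There is no genuine obstacle here: all the mathematical content has been carried out in the cited statements, and the only care needed is to make sure each of them is applied under its stated hypothesis, namely the tameness of $L|K$, which is built into the hypothesis of the corollary. If one wanted to be maximally self-contained one could also recall briefly why, in the tame case, the minimal extension yielding semi-stable reduction agrees with the minimal \emph{Galois} such extension (so that Theorem~\ref{theorem_main} is directly applicable), but this is immediate from the fact that $K(\pi^{1/d})|K$ is Galois when $p\nmid d$ and $k$ is algebraically closed.
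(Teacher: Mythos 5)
Your proposal is correct and follows exactly the route the paper intends: the corollary is stated there as following ``immediately from the combination of Theorems~\ref{theorem_main_effective} and \ref{theorem_min-snc-triangulation}'', and your chain $[L:K]=\lcm$ over $V_{\mathrm{min\text{-}tr}}$ (Theorem~\ref{theorem_main_effective}) $=\lcm$ over $V_{\mathrm{min\text{-}str}}$ (via~\ref{point:lcm_equal_in_tame_case}) $=\lcm$ over $(V_{\mathrm{min\text{-}snc}})_\mathrm{pr}$ (Theorem~\ref{theorem_min-snc-triangulation_two}) is precisely that combination, with the intermediate passage from the minimal triangulation to the minimal strong triangulation made usefully explicit.
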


\begin{remark}
	In the classical formulation of Saito's criterion (cf. \cite[Theorem 3]{Saito1987}), the components of the special fiber $(\C_\mathrm{min-snc})_k$ of the minimal snc model $\C_\mathrm{min-snc}$ of $\frakC$ are required to satisfy the following additional conditions: no two components whose multiplicities are divisible by $p$ can intersect, and no component intersecting exactly only one other component can have multiplicity divisible by $p$.
	However, as follows from \ref{point_intersection_multiplicities_chains}, if $p$ divides the multiplicity of the last curve in a chain of rational curves in $(\C_\mathrm{min-snc})_k$, or if it divides the multiplicities of two intersecting components in such a chain, then it divides also the multiplicity of a principal component at the end of the chain.
	Therefore, the condition is automatically satisfied as long as no point of $(V_\mathrm{min-snc})_\mathrm{pr}$ has multiplicity divisible by $p$.
\end{remark}

\begin{remark}\label{remark_esssential-skeleton-tame}
	Let $V_{\mathrm{min-snc}}$ be the minimal snc vertex set of $C$.
	It is proven in \cite[Theorem~3.3.13]{BakerNicaise2016} that the subspace $\Sigma\big((V_{\mathrm{min-snc}})_\pr\big)$ of $C$ is the \emph{essential skeleton} of $C$, in the sense of \cite{MustataNicaise2015} and \cite{NicaiseXu2016}.
	If we assume that $C$ acquires semi-stable reduction over a tamely ramified extension of $K$, it follows then from Theorem~\ref{theorem_min-snc-triangulation_two}, that the essential skeleton coincides with $\Sigma(V_{\mathrm{min-str}})$.
	Moreover, the model associated with $V_{\mathrm{min-str}}$ is the minimal dlt model of $\frakC$ that is also minimal with respect to the domination order relation (see \cite[Remark~3.3.7]{BakerNicaise2016}).
	However, since Theorem~\ref{theorem_min-snc-triangulation_two} does not necessarily hold in the wildly ramified case, the essential skeleton and $\Sigma(V_{\mathrm{min-str}})$ do not always coincide.
	An example where the former is strictly smaller than the latter will be given in \ref{point_elliptic-curves-wild-potentially-multiplicative}.
\end{remark}


\section{Marked curves and tame triangulations of elliptic curves}
\label{section_elliptic_curves}

In this section we briefly explain how to adapt the theory that we have developed so far to the case of curves with marked points.
We then describe explicitly all possible minimal triangulations of elliptic curves, in terms of their reduction type, when the minimal extension yielding semi-stabiliy is tamely ramified.

\pa{
	Let $\frakC$ be a geometrically connected, smooth and projective algebraic curve over $K$ and let $\calN$ be a finite set of $K$-rational points of $\frakC$.
	We denote as usual by $C$ the analytification of $\frakC$, and we implicitly identify the set $\calN$ with the corresponding set of $K$-points of $C$.
	We then define a triangulation (respectively a strong triangulation) on $(C,\calN)$ as a triangulation (respectively a strong triangulation) $V$ of $C$ satisfying the following additional conditions: 
	\begin{enumerate}
		\item every connected component of $C \setminus V$ contains at most a point of $\calN$;
		\item every connected component of $C \setminus V$ containing a point of $\calN$ is a virtual disc.
	\end{enumerate}
	This definition ensures the existence of a minimal triangulation not only for curves of genus $g>1$, but more generally for curves of genus $g$ with $n$ marked points satisfying the numerical condition $2g-2+n>0$, and in particular for elliptic curves.
	Upon modifying the definitions of snc models and semi-stable models to require that all marked points specialize to smooth points of the special fiber, the definition of principal point of a snc vertex set by having each marked point contribute to the degree in \ref{point_def_principal} (so that, for example, a point $x$ of a snc vertex set $V$ of $E$ is principal also whenever $x$ has degree 2 in $\Sigma(V)$ and a connected component of $E\setminus V$ adjacent to $x$ contains a marked point), and redefining the analytic skeleton of $(C,\calN)$ as the analytic skeleton of $C\setminus\calN$ (that is the subset consisting of those points having no neighborhood in $C$ isomorphic to a virtual disc containing no marked point), then all results of the previous sections hold true more generally for marked curves.
	We leave to the reader to verify these results, observing that this task is made simpler by the fact that, if $\calC$ is a snc model of $C$, then every $K$-point of $C$ specializes to a smooth point of a component of multiplicity one of $\calC_k$ (see for example Remark~\ref{remark_regular_with_rational_point_is_disk}).
}

\Pa{Tame elliptic curves with potentially good reduction}{\label{point_elliptic_tame_additive}
	Let $(E,0)$ be the analytification of an elliptic curve over $K$ and assume that $E$ has potentially good reduction.
	Then the minimal triangulation of $(E,0)$ consists of a single point, the unique point $x$ of $E$ of genus $g(x)=1$.
	This is also the minimal strong triangulation.
	Assume that $E$ acquires semi-stable reduction after a tamely ramified extension $L$ of $K$ (which is always the case if the residue characteristic of $K$ is different from $2$ and $3$, since our $E$ has good reduction as soon as it admits a Weierstrass equation in Legendre normal form, which requires at most a base field extension of degree 6, see the proofs of VII.5.5 and III.1.7 in \cite{Silverman2016}).
	We are going to attach some natural numerical invariants to this triangulation, using the well known notion of the reduction type of $E$, that is of the combinatorics of the special fiber $\calE_k$ of the minimal snc model $\calE$ of $E$.
	The possible reduction types of elliptic curves can be classified and explicitly computed using Tate's algorithm (see for example \cite[IV.\S9]{Silverman1994}).
	As $E$ acquires good reduction after a tamely ramified extension of $K$, its reduction type is $\mathrm{I}_0$, $\mathrm{I}_0^\star$, $\mathrm{II}$, $\mathrm{II}^\star$, $\mathrm{III}$, $\mathrm{III}^\star$, $\mathrm{IV}$, or $\mathrm{IV^\star}$.
	We recall what this means in Figure~\ref{figure_potentially_good} below, where for every reduction type we depict the dual graph associated with $\calE_k$, 
	weighted on the vertices by the multiplicities of the corresponding irreducible component of $\calE_k$.
	In all cases, the vertex in red corresponds to the only principal component of $\calE_k$, 
	while we add an arrow departing from the vertex corresponding to the component onto which the marked point $0$ specializes.
	
	{
		\begin{figure}[h]
			\begin{center}
				\begin{tikzpicture}
				
				\draw[thin,>-stealth,->](0,0)--+(0.2,0.6);
				
				\draw[fill,color=red] (0,0)circle(2pt);
				
				\begin{footnotesize}
				\node(a)at(-0.1,.25){$1$};
				\end{footnotesize}
				
				\node(a)at(0,-1.3){$\mathrm{I}_0$};

				\begin{scope}[xshift=2cm]
				
				\draw[thin,>-stealth,->](.67,-0.7)--+(.2,0.6);		
				
				\draw[thin] (0,0)--(0.67,0.7);
				\draw[thin] (0,0)--(-.67,0.7);
				\draw[thin] (0,0)--(-.67,-0.7);
				\draw[thin] (0,0)--(.67,-0.7);
				\draw[fill,color=red] (0,0)circle(2pt);
				\draw[fill] (.67,0.7)circle(2pt);
				\draw[fill] (.67,-0.7)circle(2pt);
				\draw[fill] (-.67,0.7)circle(2pt);
				\draw[fill] (-.67,-0.7)circle(2pt);

				\begin{footnotesize}
				\node(a)at(0,.25){$2$};
				\node(a)at(-0.77,-.45){$1$};
				\node(a)at(-0.77,.45){$1$};
				\node(a)at(.77,.45){$1$};
				\node(a)at(.87,-.55){$1$};
				\end{footnotesize}

				\node(a)at(0,-1.3){$\mathrm{I}_0^\star$};
				
				\end{scope}

				\begin{scope}[xshift=4.5cm]
				
				\draw[thin,>-stealth,->](1,0)--+(.2,0.6);		
				
				\draw[thin] (0,0)--(1,0);
				\draw[thin] (0,0)--(-.65,0.7);
				\draw[thin] (0,0)--(-.67,-0.7);
				\draw[fill,color=red] (0,0)circle(2pt);
				\draw[fill] (1,0)circle(2pt);
				\draw[fill] (-.67,0.7)circle(2pt);
				\draw[fill] (-.67,-0.7)circle(2pt);

				\begin{footnotesize}
				\node(a)at(0.1,.25){$6$};
				\node(a)at(-0.77,-.45){$2$};
				\node(a)at(-0.77,.45){$3$};
				\node(a)at(.9,.25){$1$};
				\end{footnotesize}

				\node(a)at(0.2,-1.3){$\mathrm{II}$};
				
				\end{scope}

				\begin{scope}[xshift=6.5cm]
				
				\draw[thin,>-stealth,->](4.67,-0.7)--+(.2,0.6);
				
				\draw[thin] (0,0)--(1,0);
				\draw[thin] (0,0)--(.65,0.7);
				\draw[thin] (.65,0.7)--(1.65,0.7);
				\draw[thin] (0,0)--(.67,-0.7);
				\draw[thin] (.65,-0.7)--(4.65,-0.7);
				\draw[fill,color=red] (0,0)circle(2pt);
				\draw[fill] (1,0)circle(2pt);
				\draw[fill] (.67,0.7)circle(2pt);
				\draw[fill] (1.67,0.7)circle(2pt);
				\draw[fill] (.67,-0.7)circle(2pt);
				\draw[fill] (1.67,-0.7)circle(2pt);
				\draw[fill] (2.67,-0.7)circle(2pt);
				\draw[fill] (3.67,-0.7)circle(2pt);
				\draw[fill] (4.67,-0.7)circle(2pt);

				\begin{footnotesize}
				\node(a)at(-0.1,.25){$6$};
				\node(a)at(1.1,.25){$3$};
				\node(a)at(0.77,-.45){$5$};
				\node(a)at(1.77,-.45){$4$};
				\node(a)at(2.77,-.45){$3$};
				\node(a)at(3.77,-.45){$2$};
				\node(a)at(4.57,-.45){$1$};
				\node(a)at(0.77,.95){$4$};
				\node(a)at(1.77,.95){$2$};
				\end{footnotesize}

				\node(a)at(2.2,-1.3){$\mathrm{II}^\star$};

				\end{scope}

				\begin{scope}[yshift=-3.5cm]
				
				\draw[thin,>-stealth,->](1,0)--+(.2,0.6);		
				
				\draw[thin] (0,0)--(1,0);
				\draw[thin] (0,0)--(-.65,0.7);
				\draw[thin] (0,0)--(-.67,-0.7);
				\draw[fill,color=red] (0,0)circle(2pt);
				\draw[fill] (1,0)circle(2pt);
				\draw[fill] (-.67,0.7)circle(2pt);
				\draw[fill] (-.67,-0.7)circle(2pt);

				\begin{footnotesize}
				\node(a)at(0.1,.25){$4$};
				\node(a)at(-0.77,-.45){$1$};
				\node(a)at(-0.77,.45){$2$};
				\node(a)at(.9,.25){$1$};
				\end{footnotesize}

				\node(a)at(0.2,-1.3){$\mathrm{III}$};
				
				\end{scope}

				\begin{scope}[xshift=2.cm, yshift=-3.5cm]
				
				\draw[thin,>-stealth,->](2.67,-0.7)--+(.2,0.6);
				
				\draw[thin] (0,0)--(1,0);
				\draw[thin] (0,0)--(.65,0.7);
				\draw[thin] (.65,0.7)--(2.65,0.7);
				\draw[thin] (0,0)--(.67,-0.7);
				\draw[thin] (.65,-0.7)--(2.65,-0.7);
				\draw[fill,color=red] (0,0)circle(2pt);
				\draw[fill] (1,0)circle(2pt);
				\draw[fill] (.67,0.7)circle(2pt);
				\draw[fill] (1.67,0.7)circle(2pt);
				\draw[fill] (2.67,0.7)circle(2pt);
				\draw[fill] (.67,-0.7)circle(2pt);
				\draw[fill] (1.67,-0.7)circle(2pt);
				\draw[fill] (2.67,-0.7)circle(2pt);

				\begin{footnotesize}
				\node(a)at(-0.1,.25){$4$};
				\node(a)at(1.1,.25){$2$};
				\node(a)at(0.77,-.45){$3$};
				\node(a)at(1.77,-.45){$2$};
				\node(a)at(2.57,-.45){$1$};
				\node(a)at(0.77,.95){$3$};
				\node(a)at(1.77,.95){$2$};
				\node(a)at(2.77,.95){$1$};
				\end{footnotesize}

				\node(a)at(1.8,-1.3){$\mathrm{III}^\star$};

				\end{scope}

				\begin{scope}[yshift=-3.5cm,xshift=6.5cm]
				
				\draw[thin,>-stealth,->](1,0)--+(.2,0.6);		
				
				\draw[thin] (0,0)--(1,0);
				\draw[thin] (0,0)--(-.65,0.7);
				\draw[thin] (0,0)--(-.67,-0.7);
				\draw[fill,color=red] (0,0)circle(2pt);
				\draw[fill] (1,0)circle(2pt);
				\draw[fill] (-.67,0.7)circle(2pt);
				\draw[fill] (-.67,-0.7)circle(2pt);

				\begin{footnotesize}
				\node(a)at(0.1,.25){$3$};
				\node(a)at(-0.77,-.45){$1$};
				\node(a)at(-0.77,.45){$1$};
				\node(a)at(.9,.25){$1$};
				\end{footnotesize}

				\node(a)at(0.2,-1.3){$\mathrm{IV}$};
				
				\end{scope}

				\begin{scope}[xshift=8.5cm, yshift=-3.5cm]
				
				\draw[thin,>-stealth,->](2,0)--+(.2,0.6);		
				
				\draw[thin] (0,0)--(2,0);
				\draw[thin] (0,0)--(.65,0.7);
				\draw[thin] (.65,0.7)--(1.65,0.7);
				\draw[thin] (0,0)--(.67,-0.7);
				\draw[thin] (.65,-0.7)--(1.65,-0.7);
				\draw[fill,color=red] (0,0)circle(2pt);
				\draw[fill] (1,0)circle(2pt);
				\draw[fill] (2,0)circle(2pt);
				\draw[fill] (.67,0.7)circle(2pt);
				\draw[fill] (1.67,0.7)circle(2pt);
				\draw[fill] (.67,-0.7)circle(2pt);
				\draw[fill] (1.67,-0.7)circle(2pt);

				\begin{footnotesize}
				\node(a)at(-0.1,.25){$3$};
				\node(a)at(1.1,.25){$2$};
				\node(a)at(1.9,.25){$1$};
				\node(a)at(0.77,-.45){$2$};
				\node(a)at(1.77,-.45){$1$};
				\node(a)at(0.77,.95){$2$};
				\node(a)at(1.77,.95){$1$};
				\end{footnotesize}

				\node(a)at(1.2,-1.3){$\mathrm{IV}^\star$};

				\end{scope}

				\end{tikzpicture} 
			\end{center}
			\caption{}\label{figure_potentially_good}\end{figure}
	}
	
	\noindent As in Theorem~\ref{theorem_min-snc-triangulation}, the red vertex $x$ is also the unique point $x$ of the minimal triangulation of $(E,0)$, which in these cases is also the minimal strong triangulation of $(E,0)$.
	Thanks to Proposition~\ref{proposition_formalfibers} and to the tameness assumption, we can apply recursively the Fusion Lemmas 
	in order to show that the connected component $X$ of $E\setminus\{x\}$ that contains $0$ is a fractional disc, that is
	\[
	X\cong\big\{x\in\aan \,\big|\,|T(x)|<|\pi|^\alpha\big\},
	\]
	where $T$ denotes a coordinate function on $\aan$, for some rational number $\alpha$.
	Observe that the class of $\alpha$ in $\Q/\Z$, which depends only on the isomorphism class of $X$ and determines this isomorphism class uniquely, can be computed explicitly for every reduction type.
	Indeed, from Proposition~\ref{proposition_formalfibers} we obtain that the components of $E\setminus V_\mathrm{min-snc}$ with two ends are regular fractional annuli (in the terminology of Section~\ref{section_regularity_line}).
	We can then easily compute $\alpha$ by gluing together the regular fractional annuli of $E \setminus V_\mathrm{min-snc}$ in the path from $x$ to $0$.
	For example, in the case $\mathrm{IV^\star}$ we have to merge a disc centered at $0$ with a regular fractional annulus of the form 
	\(
	\big\{x\in\aan \;\big|\; |\pi| < |T(x)| < |\pi|^{1/2}\big\}
	\) 
	and with a second one of the form
	\(
	\big\{x\in\aan \;\big|\; |\pi|^{1/2} < |T(x)| < |\pi|^{1/3}\big\}.
	\)
	The resulting fractional disc $X$ will then be of the form
	\[
	X\cong\big\{x\in\aan \,\big|\,|T(x)|<|\pi|^{\sfrac{1}{3}}\big\},
	\]
	which is to say that $\alpha=1/3$.
	On the other hand, in the case $\mathrm{IV}$ we have to merge a disc centered at $0$ with a regular fractional annulus of the form 
		\(
		\big\{x\in\aan \;\big|\; |\pi|^{1/3} < |T(x)| < 1\big\}
		\cong 
		\big\{x\in\aan \;\big|\; |\pi| < |T(x)| < |\pi|^{1-1/3}\big\}
		\) 
		(observe that this is not the same as the fractional annulus $\big\{x\in\aan \;\big|\; |\pi| < |T(x)| < |\pi|^{1/3}|\big\}$, which is not regular)
		and so the resulting fractional disc $X$ will be of the form
		\[
		X\cong\big\{x\in\aan \,\big|\,|T(x)|<|\pi|^{1-\sfrac{1}{3}}\big\},
		\]
		that is we can take $\alpha=-1/3$.
	The other cases are analogue to the previous two.
	We deduce that the reduction type of $E$ is fully determined by (and determines uniquely) the datum of $\alpha$ and of the multiplicity $m(x)$ of $x$, as indicated in the following table:
	
	\smallskip
	{	\begin{center}
			\begin{tabular}{| l || c | c | c | c | c | c | c | c |}
				\hline
				{\bf reduction type} & $\mathrm{I}_0$ & $\mathrm{I}_0^\star$ & $\mathrm{II}$ & $\mathrm{II}^\star$ & $\mathrm{III}$ & $\mathrm{III}^\star$ & $\mathrm{IV}$ & $\mathrm{IV^\star}$\\ \hline
				\boldmath{$m(x)$} & 1 & 2 & 6 & 6 & 4 & 4 & 3 & 3 \\ \hline
				\boldmath{$\alpha \mod \Z$} & 0 & 1/2 & -1/6 & 1/6 & -1/4 & 1/4 & -1/3 & 1/3 \\
				\hline
			\end{tabular}
	\end{center}}
	\smallskip
	
	\noindent Note that in particular, as in Corollary~\ref{corollary_saito_effective}, the multiplicity $m(x)$ coincides with the degree $[L\colon K]$ of the minimal extension $L$ of $K$ over which $E$ acquires semi-stable reduction (that is, over whose valuation ring $E$ admits a smooth model).
}

\Pa{Tame elliptic curves with potentially multiplicative reduction}{\label{point_elliptic_tame_multiplicative}
	Let $E$ be the analytification of an elliptic curve which does not have potentially good semi-stable reduction.
	Equivalently, $E$ contains no type 2 point of positive genus. 
	Then $E$ acquires split multiplicative reduction over an extension $L$ of $K$, which means that $E_L$ has a snc model whose special fiber is a chain of rational curves closing onto itself.
	It follows that $E_L$ has the homotopy type of a circle and therefore $E$ is not geometrically contractible.
	Suppose that the residue characteristic of $K$ is different from two.
	Tate showed that the degree $[L:K]$ is at most two (see \cite[Theorem C.14.1.(d)]{Silverman2016}), so in our situation $L$ is tamely ramified over $K$.
	Over $K$, the curve $E$ can have reduction type either $\mathrm{I}_n$ (which means that $E$ has multiplicative reduction already over $K$) or $\mathrm{I}_n^\star$, for some $n>1$.
	In both cases $n$ is equal to the opposite of the $\pi$-adic valuation of the $j$-invariant of $E$.
	As before, we refer to \cite[IV.\S9]{Silverman1994} for a detailed discussion of reduction types and limit ourselves to recalling what this means in terms of the combinatorial data arising from the minimal snc model $\calE$ of $E$.
	Exactly as in Figure~\ref{figure_potentially_good}, Figure \ref{figure_potentially_multiplicative} depicts for each reduction type the dual graph associated with $\calE_k$, weighted with the multiplicities of the corresponding components of $\calE_k$, with the principal points depicted as red vertices, and an arrow departing from the vertex corresponding to the component onto which the point $0$ specializes.
	
	\begin{figure}[h]
			\begin{center}
				\begin{tikzpicture}
				
				\draw[thin,>-stealth,->](0,0)--+(-.2,0.6);		
				
				\draw[thin] (0,0)--(.65,0.7);
				\draw[thin] (.65,0.7)--(1.65,0.7);
				\draw[thin,dashed] (1.65,0.7)--(3,0.7);
				\draw[thin] (3,0.7)--(4,0.7);
				\draw[thin] (4,0.7)--(4.67,0);
				
				\draw[thin] (0,0)--(.67,-0.7);
				\draw[thin] (.65,-0.7)--(1.65,-0.7);
				\draw[thin,dashed] (1.65,-0.7)--(3,-0.7);
				\draw[thin] (3,-0.7)--(4,-0.7);
				\draw[thin] (4,-0.7)--(4.67,0);
				
				\draw[fill,color=red] (0,0)circle(2pt);
				\draw[fill] (.67,0.7)circle(2pt);
				\draw[fill] (1.67,0.7)circle(2pt);
				\draw[fill] (3,0.7)circle(2pt);
				\draw[fill] (4,0.7)circle(2pt);
				\draw[fill] (.67,-0.7)circle(2pt);
				\draw[fill] (1.67,-0.7)circle(2pt);
				\draw[fill] (3,-0.7)circle(2pt);
				\draw[fill] (4,-0.7)circle(2pt);
				\draw[fill] (4.67,0)circle(2pt);

				\begin{footnotesize}
				\node(a)at(-0.1,-.25){$1$};
				
				\node(a)at(4.77,.25){$1$};
				
				\node(a)at(0.77,.95){$1$};
				\node(a)at(1.77,.95){$1$};
				\node(a)at(3.1,.95){$1$};
				\node(a)at(4.1,.95){$1$};
				
				\node(a)at(0.77,-.45){$1$};
				\node(a)at(1.77,-.45){$1$};
				\node(a)at(3.1,-.45){$1$};
				\node(a)at(3.9,-.45){$1$};
				
				\node(a)at(2.4,-1.2){(loop consisting of $n>1$ edges)};
				\end{footnotesize}
				
				\node(a)at(2.43,-1.9){$\mathrm{I}_n$};

				\begin{scope}[xshift=7cm]
				
				\draw[thin,>-stealth,->](-.67,-0.7)--+(-.2,0.6);		
				
				\draw[thin] (0,0)--(1,0);
				\draw[thin,dashed] (1,0)--(2.43,0);
				\draw[thin] (2.43,0)--(3.43,0);
				\draw[thin] (0,0)--(-.65,0.7);
				\draw[thin] (0,0)--(-.67,-0.7);
				\draw[thin] (3.43,0)--(4,0.7);
				\draw[thin] (3.43,0)--(4,-0.7);			
				
				\draw[fill,color=red] (0,0)circle(2pt);
				\draw[fill] (1,0)circle(2pt);
				\draw[fill] (2.43,0)circle(2pt);
				\draw[fill,color=red] (3.43,0)circle(2pt);
				\draw[fill] (-.67,0.7)circle(2pt);
				\draw[fill] (-.67,-0.7)circle(2pt);
				\draw[fill] (4,0.7)circle(2pt);
				\draw[fill] (4,-0.7)circle(2pt);

				\begin{footnotesize}
				\node(a)at(0.1,.25){$2$};
				\node(a)at(1.1,.25){$2$};
				\node(a)at(-0.57,-.95){$1$};
				\node(a)at(-0.57,.95){$1$};
				\node(a)at(2.53,.25){$2$};
				\node(a)at(3.33,.25){$2$};
				\node(a)at(4.1,.45){$1$};
				\node(a)at(4.1,-.95){$1$};
				
				\draw [decorate,decoration={brace,amplitude=5pt,mirror,raise=3ex}]
				(0,0) -- (3.43,0) node[midway,yshift=-3em]{($n>1$ edges)};
				
				\end{footnotesize}

				\node(a)at(1.8,-1.9){$\mathrm{I}^\star_n$};
				\end{scope}
				\end{tikzpicture} 
			\end{center}
			\caption{}\label{figure_potentially_multiplicative}
	\end{figure}

	\noindent As in Theorem \ref{theorem_min-snc-triangulation}, the red points correspond to the minimal strong triangulation, but they do not correspond to the minimal triangulation in the case of $\mathrm{I}_n^\star$. 
	Indeed, in this case the minimal triangulation contains only the red point on the left, since removing this latter cuts the curve into a virtual annulus with one end and an infinite number of generalized fractional discs.
	As an aside, we observe that keeping only the red point on the right would yield a triangulation of the unmarked curve $E$ which is not a triangulation of $(E,\{0\})$, since the origin would lie in the virtual annulus; in particular the unmarked curve $E$ does not have a minimal triangulation.
	For both types $\mathrm{I}_n$ and $\mathrm{I}_n^\star$, by reasoning as in the case of potentially good reduction, we can compute the isomorphism class of the unique virtual annulus $X$ among the connected components of $E \setminus V_{\mathrm{min-str}}$.
	In the case $\mathrm{I}_n^\star$ this is a generalized fractional annulus trivialized by a quadratic extension $L$ of $K$, and it is then determined by the multiplicities of its ends and the isomorphism class of $X_L$.
	We then obtain the following table, associating with the datum of the minimal strong triangulation and the isomorphism class of the virtual annulus $X$ a uniquely determined reduction type.
	
	\smallskip
	{	\begin{center}
			\begin{tabular}{| l || c | c |}
				\hline
				{\bf reduction type} & $\mathrm{I}_n$ & $\mathrm{I}_n^\star$\\ \hline
				\boldmath{$\#V_{\mathrm{min-str}}$} & 1 & 2 \\ \hline
				\boldmath{$X\subset\aan$} & $\big\{|\pi|^n<|T(x)|<1\big\}$ & $\big\{|\pi|^{2(n+1)} < |T^2(x)-\pi| < |\pi|^{2}\big\}$\\
				\hline
			\end{tabular}
	\end{center}}
	\smallskip
	
	This completes the description of the minimal triangulations and strong triangulations of elliptic curves in the tame case.
}

\section{First steps in wildlife observation}

\label{section_wild}

In this section, we discuss several examples of curves that acquire semi-stable reduction after a wildly ramified extension of $K$. 
We explain why in some cases dropping the tameness assumption leads to a failure of the equality of the effective version of Saito's criterion (Corollary~\ref{corollary_saito_effective}), with the aim to start a systematic study of the minimal extension realizing semi-stable reduction from the point of view of non-archimedean analytic geometry.\\

\Pa{Elliptic curves with potentially multiplicative reduction}{\label{point_elliptic-curves-wild-potentially-multiplicative}
	Assume that $K$ has residue characteristic 2 and let $E$ be the analytification of an elliptic curve over $K$ with non-split multiplicative reduction, that is, $E$ has not multiplicative reduction over $K$ but it acquires it after a base change to a finite separable extension $L$ of $K$.
	In this case, such an extension $L$ can always be taken to be of degree 2 over $K$.
	The analytic skeleton of $E$ is a line segment, since it is the quotient of the loop $\Sigma^{\mathrm{an}}(E_L)$ by the involution induced by the action of the Galois group $\mathrm{Gal}(L|K)$.
	The minimal strong triangulation of $E$ consists precisely of the two endpoints $x_1$ and $x_2$ of this segment, which are the images via the base change morphism of the two points of $\Sigma^{\mathrm{an}}(E_L)$ that are fixed by the action of $\mathrm{Gal}(L|K)$.
	Equivalently, $x_1$ and $x_2$ are the only points of the analytic skeleton $\Sigma^{\mathrm{an}}(E)$ of $E$ whose fields of constants $\fraks(x_1)$ and $\fraks(x_2)$ are equal to $K$.
	These are the only nodes of the analytic skeleton of $E$, since any other point $y$ of $\Sigma^{\mathrm{an}}(E)$ satisfies $g(y)=0$ and $\fraks(y)=L$, therefore they form the minimal strong triangulation of $E$.
	The minimal triangulation of $E$ is obtained from the minimal strong triangulation by keeping only the point, say $x_1$, corresponding to the component onto which the neutral element of $E$ specializes.
	The connected component of $E\setminus\{x_1\}$ which contains $x_2$ is then a virtual annulus with one end and bending point $x_2$, trivialized by $L$.
	\\
	Let us discuss an explicit example.
	Let $K=\widehat{\Q_2^\mathrm{ur}}$ be the completion of the maximal unramified extension of the field of $2$-adic numbers and let $E$ be the analytification of the elliptic curve over $K$ defined birationally by the minimal Weierstrass equation
	\[
		v^2-2uv+16v=u^3+2u^2+32u.
	\]
	The $2$-adic valuation of the $j$-invariant $j(E)$ of $\frakE$ is $v_K(j(E))=-1$, and hence the curve $\frakE$ has potentially multiplicative reduction.
	It does not have multiplicative reduction itself, as the $R$-model defined by the same equation has additive reduction.
	To be more precise, Tate's algorithm shows that the reduction type of $\frakE$ is $I_5^\star$.
	However, $\frakE$ has multiplicative reduction over a totally ramified degree 2 extension $L$ of $K$, we have $v_L(j(E))=-2$ and therefore $\frakE_L$ has reduction type $I_2$.
	The morphism of dual graphs induced by the base change to $L$ is depicted in Figure~\ref{figure_wild_multiplicative_reduction} below.
	\begin{figure}[h]
		\begin{tikzpicture}[scale=1]	
		
		\draw[thin,>-stealth,->](5.8,2.4)--+(.4,0.5);		
		\draw[thin,>-stealth,->](5.8,-0.6)--+(.4,0.5);		
		
		\draw[thin] (0,0)--(5,0);
		\draw[thin] (0,0)--(-.8,0.6);
		\draw[thin] (0,0)--(-.8,-0.6);
		\draw[thin] (5,0)--(5.8,0.6);
		\draw[thin] (5,0)--(5.8,-0.6);
		\draw[fill,color=darkblue] (0,0)circle(2pt);
		\draw[fill] (1,0)circle(2pt);
		\draw[fill,color=red] (2,0)circle(2pt);
		\draw[fill,color=red] (3,0)circle(2pt);
		\draw[fill] (4,0)circle(2pt);
		\draw[fill,color=darkblue] (5,0)circle(2pt);
		\draw[fill] (-.8,0.6)circle(2pt);
		\draw[fill] (-.8,-0.6)circle(2pt);
		\draw[fill] (5.8,0.6)circle(2pt);
		\draw[fill] (5.8,-0.6)circle(2pt);

		\draw[thin,>-stealth,->](2.5,2)--(2.5,0.5);

		
		\draw[thin,dashed] (2,3)--(0,3);
		\draw[thin,dashed] (0,3)--(-.8,3.6);
		\draw[thin,dashed] (0,3)--(-.8,2.4);
		
		\draw[thin,dashed] (3,3)--(5,3);
		\draw[thin,dashed] (5,3)--(5.8,3.6);
		\draw[thin,dashed] (5,3)--(5.8,2.4);
		
		\draw (2,3) to[bend right] (3,3);
		\draw (2,3) to[bend left] (3,3);
		
		\draw[fill=white] (0,3)circle(2pt);
		\draw[fill=white] (1,3)circle(2pt);
		\draw[fill,color=red] (2,3)circle(2pt);
		\draw[fill=white] (4,3)circle(2pt);
		\draw[fill,color=red] (3,3)circle(2pt);
		\draw[fill=white] (5,3)circle(2pt);

		\draw[fill=white] (-.8,3.6)circle(2pt);
		\draw[fill=white] (-.8,2.4)circle(2pt);
		\draw[fill=white] (5.8,3.6)circle(2pt);
		\draw[fill=white] (5.8,2.4)circle(2pt);

		
		\node(a)at(-2,0){$I^\star_5$};
		\node(a)at(-2,3){$I_{2}$};


		\end{tikzpicture} 
		\caption{}
		\label{figure_wild_multiplicative_reduction}
	\end{figure}

	In both graphs in the figure, the red vertices form the minimal strong triangulation, while the minimal triangulation consists of the red vertex on the right since that's the vertex carrying the arrow once all non-red vertices have been contracted.
	In the top graph, the two red vertices actually also form the minimal snc vertex set of $E_L$; the dashed part of the graph, which corresponds to two discs, contains the components that are contracted in order to pass to the minimal snc model of $E_L$.
	In the bottom graph, $(V_\mathrm{min-snc})_\mathrm{pr}$ consists of the two blue vertices, and is therefore different from $V_\mathrm{min-snc}$.
	In particular, the second part of Theorem~\ref{theorem_min-snc-triangulation} does not hold.
	In this case such a behavior can also be seen as a particular case of \cite[Theorem 2.8]{Lorenzini2010}; other examples of this phenomenon, including in equicharacteristic 2, can be found in the proof of that result.
}

\Pa{Elliptic curves with potentially good reduction}{
We now give examples concerning points of positive genus.
Let $E$ be the analytification of an elliptic curve with potentially good reduction.
Then the minimal triangulation of $E$ coincides with its minimal strong triangulation and it consists of a single point, the unique type 2 point $x$ of $E$ of positive genus.
Let us discuss some consequences of this simple observation.
\begin{enumerate}
	\item Let $E$ be the analytification of the elliptic curve $\frakE$ over $K$ defined birationally by the equation
	\[
		v^2-\pi^2v= u^3+\pi u^2 +\pi^3u.
	\]
	If $K$ has residue characteristic 2, a simple computation shows that the $j$-invariant of $\frakE$ has positive valuation, so that $\frakE$ has potentially good reduction.
	Applying Tate algorithm, one can show that the curve $E$ has reduction type $\mathrm{I}_1^\star$.
	As a result, the skeleton of the minimal snc vertex set of $E$ is as depicted in Figure~\ref{figure_I_1^8} below.
	\begin{figure}[h]
		\begin{tikzpicture}[scale=1]	
		
		\draw[thin,>-stealth,->](1.8,-0.6)--+(+.4,0.5);		
						
		\draw[thin] (0,0)--(1,0);
		\draw[thin] (0,0)--(-.8,0.6);
		\draw[thin] (0,0)--(-.8,-0.6);
		\draw[thin] (1,0)--(1.8,0.6);
		\draw[thin] (1,0)--(1.8,-0.6);
		\draw[fill,color=red] (0,0)circle(2pt);
		\draw[fill] (1,0)circle(2pt);
		\draw[fill,color=red] (1,0)circle(2pt);
		\draw[fill] (-.8,0.6)circle(2pt);
		\draw[fill] (-.8,-0.6)circle(2pt);
		\draw[fill] (1.8,0.6)circle(2pt);
		\draw[fill] (1.8,-0.6)circle(2pt);

		\node(a)at(-2,0){$\mathrm{I}^\star_1$};
		\end{tikzpicture} 
		\caption{}\label{figure_I_1^8}
	\end{figure}
	The two red vertices in the figure are exactly the principal points of $V_\mathrm{min-snc}$.
	In particular, since $V_\mathrm{min-str}$ consists of a single point of $E$, we have $(V_\mathrm{min-snc})_\mathrm{pr}\neq V_\mathrm{min-str}$, and therefore Theorem~\ref{theorem_min-snc-triangulation} implies that the minimal Galois extension $L$ of $K$ such that $E$ acquires semi-stable reduction over $L$ is wildly ramified.
	More generally, the same argument shows that an elliptic curve that has potentially good reduction and reduction type $\mathrm{I}_n^\star$, which can only exist in residue characteristic 2, acquires semi-stable reduction after a wild extension of its base field.
	
	\item \label{example:wild_bad_cusp}
	Assume that $K$ has mixed characteristic $(0,2)$ and let $E$ be the analytification of the elliptic curve over $K$ defined birationally by the equation
	\[
	v^2-u^3=\pi.
	\]
	Then $E$ has reduction type $\mathrm{II}$ (as depicted in Figure~\ref{figure_potentially_good} in the previous section).
	Indeed, the minimal snc model of $E$ can be computed explicitly quite simply, as this boils down to the classical computation of a good embedded resolution of the plane cuspidal curve defined by $v^2-u^3=0$.
	If we denote by $\varpi$ a square root of $\pi$, the change of variables defined by $w=u/\sqrt[3]{4\pi}$ and $z=(v-\varpi)/2\varpi$ brings the equation to
	\[
	w^3=z(z-1),
	\]
	which is smooth.
	Observe that, whenever the $\pi$-adic valuation of $2$ is congruent to $1$ modulo $3$, then $\sqrt[3]{4\pi}$ belongs to the quadratic extension $L=K(\varpi)$ of $K$.
	It follows that in this case $E$ acquires good semi-stable reduction after the base change to $L$.
	In particular, $x$ has multiplicity at most $2$, and therefore it cannot be the point of $(V_\mathrm{min-snc})_\mathrm{pr}$, as the latter has multiplicity 6.
	In fact, it is interesting to point out that, contrarily to what happened in Example~\ref{point_elliptic-curves-wild-potentially-multiplicative}, the point $x$ is not in $V_\mathrm{min-snc}$ either.
	Indeed, let $y$ be the only point of multiplicity 2 of $V_\mathrm{min-snc}$, which is the point associated with the exceptional component of the blowup of the origin of the special fiber of the $R$-model defined by $v^2-u^3=\pi$.
	Then $x$ does not coincide with $y$, as can be verified directly on the algebra of the blowup, for example by showing that the inverse image of $y$ via the base change map to $L$ consists of a point which still has multiplicity 2.
	The point $x$ can be obtained by blowing up further a point of the exceptional component associated with $y$ (more precisely, the only point of the component which becomes not regular in the normalized base change of the blown-up model to $R[\varpi]$).
	The number of blowups required to make $x$ appear depend on the $\pi$-adic valuation of 2; for example, one blowup is sufficient if this valuation is equal to 1.
	We remark that the same behavior occurs with the hyperelliptic curves defined by the equation $v^2-u^m=\pi$ (see also \cite[Remark~3.12]{Lorenzini2010}).
	
	\item 
	In the example above, the point of positive genus of $E$ is not contained in $V_\mathrm{min-snc}$.
	We give a condition that prevents this behavior.
	Let $L$ be a Galois extension of $K$ and let $\calE$ be a smooth model of $E_L$ over the valuation ring of $L$.
	Then $G=\Gal(L|K)$ acts on the special fiber $\calE_k$ of $\calE$, inducing a Galois cover of $k$-curves $\varphi\colon \calE_k \to \calE_k/G$.
	Assume that the ramification locus of $\varphi$ consist of at least three distinct points. 
	Then $x$ is a principal point of $V_\mathrm{min-snc}$. 
	Indeed, the ramification points of $\varphi$ are in one-to-one correspondence with the edges of the skeleton $\Sigma(V)$ that are adjacent to $x$, where $V$ is the minimal snc vertex set of $E$ that contains $x$, since those are precisely the point of $\calE_k$ that map to singular point of $\calE_k/G$ by \cite[\S5.2]{Lorenzini2014}. 
	If $x$ is not a point of $V_\mathrm{min-snc}$, then $V$ is obtained by adding $x$ to a snc vertex set of $E$, so that $x$ has degree one or two in $\Sigma(V)$.
	As this contradicts our hypothesis, we deduce that $x$ is a point of $V_\mathrm{min-snc}$, and hence of $(V_\mathrm{min-snc})_\mathrm{pr}$.	
\end{enumerate}
}

\Pa{Example}{
In general, one pathology that may arise in the wildly ramified case comes from the fact that desingularizing a virtual disc may result in the creation of new principal components, leading to points of $(V_\mathrm{min-snc})_\mathrm{pr}$ that are not in $V_\mathrm{min-tr}$.
This is the case of the two connected components of $E\setminus V_\mathrm{min-tr}$ that contain a trivalent vertex in Figure~\ref{figure_wild_multiplicative_reduction} in Example~\ref{point_elliptic-curves-wild-potentially-multiplicative}, and of the connected component of $E\setminus V_\mathrm{min-snc}$ containing the origin of the elliptic curve in Example~\ref{example:wild_bad_cusp}.
Another explicit example of such a virtual disc can be realized as a subspace of the $K$-analytic projective line $\pan$ as follows.
Consider the subspace $X$ of $\pan$ defined as
	\[
	X=\big\{x\in\pan \;\big|\; |(T^p-\pi)(x)|>|\pi|\big\}. 
	\]
Observe the similarity of this example with Example~\ref{example_disco_stronzo}.
Then $X$ is a virtual disc, as can be deduced from Proposition~\ref{proposition_Ducros-criterion} (it also possible to see explicitly that, after adding $p$-th root of $\pi$ to $K$, the space $X$ becomes isomorphic to a disc centered at infinity).
However, $X$ is not a generalized fractional disc, and the special fiber of the minimal snc desingularization of the canonical model of $X$ contains one component that intersects three other components.
%
}

\appendix

\section{Open fractional annuli and regularity}
\label{section_regularity_line}

In this section we introduce a notion of regularity for semi-affinoid spaces.
We then focus on the regular open semi-affinoid subspaces of $\aan$, giving an interpretation using continued fractions.

\pa{
	We say that a semi-affinoid $K$-analytic space $X$ is \emph{regular} if the associated special $R$-algebra $\calO^\circ(X)$ is regular.
}

\examples{\label{examples_regular_semi-affinoids}
	An annulus of the form 
	\[
A_{n,K}=\big\{x\in\mathbb A^{1,\mathrm{an}}_K\;\big|\;|\pi^n|<|T(x)|<1\big\}
	\]
	is regular if and only if $n=1$ is one.
	Indeed, we have $\calO^\circ(A_{n,K})\cong R[[S,T]]/(ST-\pi^n)$.
	A fractional disc of the form
	\[
	X=\big\{x\in\aan \;\big|\; |T(x)|<|\pi|^{\sfrac{1}{d}}\big\}
	\]
	is regular if and only if $d=1$, that is if and only if it is a disc (see Example~\ref{example_fractional_disc}).
	Other examples of regular semi-affinoid spaces are the virtual discs of Examples~\ref{example_disco_stronzo} and \ref{example:virtual_several_fractional_discs}.
}

\pa{
Let $X$ be an open semi-affinoid subspace of $\pan$ whose boundary $\partial X$ in $\pan$ consists of finitely many type 2 points, and let $\C$ be the normal model of $\pan$ whose vertex set is $\partial X$.
Recall that, as discussed in~\ref{point_formal_fibers} and~\ref{theorem_models-vertex-sets}, there exists a closed point $P$ of the special fiber of $\C$ such that $X\cong\Sp^{-1}_{\C}(P)$, and thus $\widehat{\calO_{\C,P}}\cong\calO^\circ(X)$.
In particular, $X$ is regular if and only if $\C$ is regular at the point $P$.
Equivalently, $X$ is regular if and only if $X\cap V=\emptyset$, where $V$ is the minimal vertex set of $\pan$ that contains $\partial X$ and whose associated model of $\pan$ is regular.
}

\pa{\label{point_pre_explicit_resolution_fractional_discs_and_annuli}
Assume that $X$ is contained in $\aan$.
Then one can check the regularity of $X$ as follows.
Since $X$ is bounded, there exists a point $x_0$ of $\pan\setminus X$ of type 2 that has multiplicity $m(x_0)=1$.
Then $\{x\}$ is the vertex set of a smooth model $\C_0$ of $\pan$, and there exists a minimal sequence of point blowups $\C_1\to \C_0$ such that $\partial X$ is contained in the snc vertex set $V_{\C_1}$ associated with $\C_1$.
Then $X$ is regular if and only if it contains no point of $V_{\C_1}$.
Indeed, this is a simple consequence of the fact that $V_{\C_1}$ contains the minimal vertex set $V_{\C_2}$ that contains $\partial X$ and whose associated model $\C_2$ is regular, that the resulting morphism $\C_1\to \C_2$, being a morphism of regular models, is a sequence of point blowups, and of the minimality in the definition of $\C_1$.
We will use this observation to study the regularity of fractional annuli in Lemma~\ref{proposition_regular-fractional-annuli}.
}

\begin{example}
\label{example_fractional_annulus_regularization_blowup}
Let us performe the procedure of the previous point in two simple cases.
Consider the two fractional annuli 
\[ 
X=\big\{x\in\aan \;\big|\; |\pi|^{{1}/{2}} < |T(x)| < 1\big\}
\]
and
\[
X'=\big\{x\in\aan \;\big|\; |\pi|^{{2}/{3}} < |T(x)| < 1\big\}.
\]
Observe that the vertex set of the model $\C_0=\PP^{1,\mathrm{an}}_R$ of $\pan$ consist of a single point $x_0$, the boundary of the unit disc centered at $0$, which belongs to the boundary of both $X$ and $X'$.
By blowing up $\C_0$ along the origin of its special fiber, we obtain a model $\C_1$ of $\pan$ whose special fiber consists of two rational curves cutting each other transversally in one point.
Its vertex set is $V_{\C_1}=\{x_0,x_1\}$, where $x_1$ is the boundary point of the disc of center 0 and radius $|\pi|$ in $\pan$.
If we blow up $\C_1$ along the double point of its special fiber we obtain a new model $\C_2$ of $\pan$ whose vertex set is $V_{\C_2}=\{x_0,x_1,x_2\}$, where $x_2$ is the boundary point of the disc of center 0 and radius $|\pi|^{1/2}$ in $\pan$.
In particular, $V_{\C_2}$ contains the boundary $\partial X=\{x_0,x_2\}$ of $X$ and $X\cap V_{\C_2}=\emptyset$, so that according to~\ref{point_pre_explicit_resolution_fractional_discs_and_annuli} the fractional annulus $X$ is regular.
Consider now the model $\C_3$ of $\pan$ obtained by blowing up $\C_2$ along the intersection point between the two components of its special fiber associated with $x_1$ and $x_2$.
Its vertex set is $V_{\C_3}=\{x_0,x_1,x_2,x_3\}$, where $x_3$ is the boundary point of the disc of center 0 and radius $|\pi|^{2/3}$ in $\pan$, and thus it contains the boundary $\partial X'=\{x_0,x_3\}$ of $X'$.
However, $X\cap V_{\C_3}=\{x_2\}\neq \emptyset$, and thus according to~\ref{point_pre_explicit_resolution_fractional_discs_and_annuli} the fractional annulus $X'$ is not regular.
This can also be checked directly by looking at the special $R$-algebras associated with $X$ and $X'$, which are described in~\cite[\S7]{FantiniTurchetti2018}.
\end{example}

\pa{\label{point_explicit_resolution_fractional_discs_and_annuli}
If $X$ is any fractional annulus, then we can perform the procedure of~\ref{point_pre_explicit_resolution_fractional_discs_and_annuli} like in the previous example, starting from the model $\PP^{1,\mathrm{an}}_R$, and at every step we either blow up along the smooth point of the special fiber where the origin of $\pan$ specializes, or we blow up a double point of the special fiber.
This yields a model of $\pan$ whose vertex set $V$ is contained in the line segment joining $0$ to $\infty$ in $\pan$.
In particular, $V\cap X$ is contained in the skeleton of $X$ and the connected components of $X\setminus V$ are regular virtual discs and finitely many regular fractional annuli.
For example, in~\ref{example_fractional_annulus_regularization_blowup} the fractional annulus $X'$ is cut by the type 2 point $x_2$ into two regular fractional annuli and a family of virtual discs.
In particular, in the terminology of~\ref{point_def_principal}, this implies that the vertex set of the minimal snc model of $X$ dominating its canonical model has no principal points.
Similarly, a fractional disc $X$ can be cut into regular virtual discs and finitely many regular fractional annuli by removing finitely many type 2 points that lie on the line segment between its boundary point and any chosen $K$-rational point of $X$, so that the same conclusion about principal points holds.
}

\begin{remark}	
	It follows from the discussion above that every regular open semi-affinoid subset of $\aan$ has at most two boundary points.
	For example, this shows that the semi-affinoid space of special $R$-algebra $R[[X,Y]]/(XY(X-Y)-\pi)$ cannot be embedded in the analytic line.
\end{remark}

\Pa{Convention}{
In the following lemma, given a rational number $\alpha$ we consider its Euclidean continued fraction expansion
\[
\alpha = a_0+\cfrac{1}{a_1+\cfrac{1}{\cdots+\cfrac{1}{a_n}}}
\]
that is uniquely determined by requiring that the integers $a_i$ satisfy the conditions $a_1,\ldots,a_{n-1}\geqslant1$ and $a_n>1$.
We denote this continued fraction expansion by $\alpha=[a_0;a_1,\ldots,a_n]$.
\\
Recall also that by convention $\gcd(a,0)=a$ for every positive integer $a$.
}

\begin{lemma}\label{proposition_regular-fractional-annuli}
		Let $X\subset \aan$ be an open fractional annulus of radii $|\pi|^{\sfrac{a}{b}}$ and $|\pi|^{\sfrac{a'}{b'}}$, with ${a},{b}, {a'},{b'}\geqslant 0$, $\gcd(a,b)=\gcd(a',b')=1$, and $b\leqslant b'$, so that $b$ and $b'$ are the multiplicities of the two ends of $X$.
	Then the following conditions are equivalent:
	\begin{enumerate}
		\item $X$ is regular;
		\item either $b'=b=1$ and $a'=a\pm1$, or $b'>b\geqslant1$, the continued fraction expansion of ${{a}/{b}}$ is  $[a_0;a_1,\ldots,a_{n-1},a_n]$, and the continued fraction expansion of ${{a'}/{b'}}$ is one of the following:
		\begin{itemize}
		\setlength\itemsep{.3em}
			\item $[a_0;\ldots,a_{n-1},a_n+1]$;
			\item $[a_0;\ldots,a_{n-1},a_n,a_{n+1}]$ for some integer $a_{n+1}\geqslant2$;
			\item $[a_0;\ldots,a_{n-1},a_n-1,2]$;
			\item $[a_0;\ldots,a_{n-1},a_n-1,1,a_{n+2}]$ for some integer $a_{n+2}\geqslant2$.
		\end{itemize}
		\vspace{.3em}
		\item $X$ contains no $K(\pi^{\sfrac{1}{d}})$-rational point, for every $d\leqslant\max\{b, b'\}$; 
		\item the skeleton of $X$ contains no point of multiplicity $d \leqslant\max\{b, b'\}$;
		\item the matrix 
		$\begin{pmatrix}
			a & a' \\
			b & b'
		\end{pmatrix}$
		has determinant $\pm1$.
	\end{enumerate}
\end{lemma}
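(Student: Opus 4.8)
The plan is to route all five conditions through the explicit blow-up resolution of \ref{point_pre_explicit_resolution_fractional_discs_and_annuli}--\ref{point_explicit_resolution_fractional_discs_and_annuli}, which turns the statement into combinatorics of the pair of fractions $a/b,\,a'/b'$. All five assertions are symmetric under exchanging the two ends of $X$, so I will normalize freely. After choosing coordinates so that $X=\{x\in\aan:|\pi|^\beta<|T(x)|<|\pi|^\alpha\}$ with $\{\alpha,\beta\}=\{a/b,a'/b'\}$, the skeleton of $X$ is the open real segment $|\pi|^s$, $\alpha<s<\beta$, a point $|\pi|^{p/q}$ on it (with $p/q$ in lowest terms, $q\geqslant1$) has both multiplicity and splitting number $q$, and the type-one point of coordinate $\pi^{p/q}$ is a $K(\pi^{\sfrac{1}{q}})$-rational point of $C$ specializing along that segment. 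This gives $(iii)\Leftrightarrow(iv)$ at once: by a valuation argument any $K(\pi^{\sfrac{1}{d}})$-rational point of $X$ realizes a skeleton point of multiplicity dividing $d$, and conversely such a skeleton point of multiplicity $q\leqslant\max\{b,b'\}$ is realized by the $K(\pi^{\sfrac{1}{q}})$-point of coordinate $\pi^{p/q}$; so both conditions say that the open interval with endpoints $a/b$ and $a'/b'$ contains no rational of reduced denominator $\leqslant\max\{b,b'\}$.

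Next I would dispose of $(iv)\Leftrightarrow(v)$, which is the classical Farey fact: $a/b$ and $a'/b'$ both lie in $F_N$ for $N=\max\{b,b'\}$, and they are consecutive there --- equivalently no fraction of denominator $\leqslant N$ separates them --- if and only if $ab'-a'b=\pm1$. The implication $(v)\Rightarrow(iv)$ follows from the estimate that if $p/q$ (reduced) lies strictly between them then $\tfrac{1}{bb'}=\bigl|\tfrac{a'}{b'}-\tfrac ab\bigr|\geqslant\tfrac1{qb}+\tfrac1{qb'}$, forcing $q\geqslant b+b'>N$; and $(iv)\Rightarrow(v)$ is the observation that two non-consecutive terms of $F_N$ are separated by a term of $F_N$ while consecutive terms of $F_N$ are unimodular. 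The degenerate cases $b=0$ (hence possibly $b'=0$ as well, since $b\leqslant b'$) are handled directly using the stated convention $\gcd(a,0)=a$.

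The geometric core is $(i)\Leftrightarrow(v)$. By the discussion preceding \ref{point_pre_explicit_resolution_fractional_discs_and_annuli}, $X$ is regular iff $X\cap V=\emptyset$, where $V$ is the minimal vertex set of $\pan$ containing $\partial X$ whose associated model is regular; and by \ref{point_explicit_resolution_fractional_discs_and_annuli}, starting from the model $\PP^1_R$ --- for which the configuration $(\infty,\text{Gauss point},0)$ corresponds to the primitive vectors $((-1,0),(0,1),(1,0))$, assigning $(p,q)$ to the point $|\pi|^{p/q}$ --- this $V$ is produced by repeatedly inserting, between two consecutive segment points $|\pi|^{p_1/q_1}$ and $|\pi|^{p_2/q_2}$, their mediant $|\pi|^{(p_1+p_2)/(q_1+q_2)}$, exactly as in Example~\ref{example_fractional_annulus_regularization_blowup}. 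Each such insertion preserves the property that consecutive points correspond to unimodularly related (Farey-adjacent) fractions --- for the mediant this is the identity $\det(v,v+v')=\det(v,v')$, and for the blow-up of the point where $0$ or $\infty$ specializes it uses that these $K$-rational sections meet a multiplicity-one component --- so an induction shows that in $V$ the two vertices flanking the formal fibre $X$ are Farey-adjacent, giving $(i)\Rightarrow(v)$. Conversely, if $ab'-a'b=\pm1$ then, after ordering so that $a/b<a'/b'$ (the subcase $b=b'$ forcing $b=b'=1$, $a'=a\pm1$, i.e.\ $X\cong A_{1,K}$, regular by Example~\ref{examples_regular_semi-affinoids}), the fraction $a/b$ is a Stern--Brocot parent of $a'/b'$, so the set of Stern--Brocot ancestors of $a'/b'$ together with $a'/b'$ is the vertex set of a regular model in which the points of $a/b$ and $a'/b'$ are consecutive; hence $X$ meets no vertex of that model, hence none of $V$ by minimality, so $X$ is regular.

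Finally, $(ii)\Leftrightarrow(v)$ is continued-fraction bookkeeping. Writing $a/b=[a_0;a_1,\dots,a_n]$ in standard form, the reduced fractions $a'/b'$ with $ab'-a'b=\pm1$ and $b'>b$ are exactly the Farey neighbours of $a/b$ of larger denominator, which on each of the two sides of $a/b$ form a family $[a_0;\dots,a_{n-1},a_n,k]$, respectively $[a_0;\dots,a_{n-1},a_n-1,1,k]$, with $k\geqslant1$; rewriting the entries $k=1$ into standard form turns them into $[a_0;\dots,a_{n-1},a_n+1]$ and $[a_0;\dots,a_{n-1},a_n-1,2]$, which yields precisely the four listed expansions, while $b'=b$ again forces the first clause. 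I expect the main obstacle to be the geometric step $(i)\Leftrightarrow(v)$: one must verify that the blow-ups needed to make $\partial X$ appear all take place along the $0$--$\infty$ segment, so that no stray exceptional curve contributes a vertex inside $X$, and one must correctly match regularity of the complete local ring $\widehat{\calO_{\C,P}}$ at the point $P$ with $\Sp_{\C}^{-1}(P)=X$ to unimodularity of $\begin{pmatrix} a & a' \\ b & b' \end{pmatrix}$; once this is in place, the Farey and continued-fraction steps are entirely elementary.
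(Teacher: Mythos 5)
Your argument is correct and rests on the same geometric input as the paper's proof --- the resolution procedure of \ref{point_pre_explicit_resolution_fractional_discs_and_annuli}--\ref{point_explicit_resolution_fractional_discs_and_annuli}, in which every blowup happens along the $0$--$\infty$ segment of $\pan$ and inserts the mediant of two adjacent exponents --- but it is organized around a different pivot. The paper proves $(i)\Leftrightarrow(ii)$ directly, by tracking how the partial quotients of $a/b$ record the alternating runs of upward and downward blowups, then invokes the standard toric regularity criterion for $(v)$ and leaves $(iii)$ and $(iv)$ to the reader. You instead make $(v)$ the hub: the mediant identity $\det(v,v+v')=\det(v,v')$ shows that consecutive vertices of a regular model along the segment are unimodular, giving $(i)\Leftrightarrow(v)$ (with the converse via the Stern--Brocot ancestor model); the list in $(ii)$ is then recovered purely arithmetically as the classification of the Farey neighbours of $a/b$ of larger denominator; and $(iii)\Leftrightarrow(iv)\Leftrightarrow(v)$ is the Farey-consecutivity criterion. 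Your route cleanly separates the geometry (concentrated in $(i)\Leftrightarrow(v)$) from elementary number theory and actually supplies the verifications of $(iii)$ and $(iv)$ that the paper omits; the paper's route yields a direct procedural description of the resolution in terms of the $a_i$, which it reuses afterwards (e.g.\ to count the regular annuli into which a fractional annulus decomposes). Both hinge on the same non-trivial point, which you correctly single out: the minimal regular vertex set containing $\partial X$ lies entirely on the skeleton of $\pan$, so adjacency of the two boundary points there detects regularity of the formal fibre.
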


\begin{proof}
	Let us prove the equivalence between $(i)$ and $(ii)$.
	If $b'=b$, then $X$ is regular if and only if $b'=b=1$ and $a'=a\pm1$, as can be readily checked by applying the procedure of~\ref{point_pre_explicit_resolution_fractional_discs_and_annuli}.
	Therefore we can assume that $b'>b$.
	Denote by $x$ (respectively $x'$) the type $2$ point of $\pan$ which is the boundary of the disc centered at the origin and of radius $|\pi|^{\sfrac{a}{b}}$ (respectively $|\pi|^{\sfrac{a'}{b'}}$).
	Then, as observed in \ref{point_pre_explicit_resolution_fractional_discs_and_annuli}, the virtual annulus $X$ is regular if and only if it is a formal fiber of the smallest snc model of $\pan$ whose vertex set contains both $x$ and $x'$.
%
	Observe that the minimal snc model $\X$ of $\pan$ that dominates $\PP^{1,\mathrm{an}}_R$ and whose vertex set contains $x$ is obtained by blowing up $\PP^{1,\mathrm{an}}_R$ $a_0+1$ times \emph{downwards} (that is, blowing up the point of the exceptional divisor of the previous blowup that points towards zero), then $a_1$ times \emph{upwards} (that is, blowing up the point of the exceptional divisor of the previous blowup that points towards $\infty$), then $a_2$ times downwards, $a_3$ upwards and so on up to $a_n-1$ times upwards (if $n$ is odd) or downwards (if $n$ is even).
	This follows from the fact that when we blow up the intersection point of two divisors whose associated type 2 points have radii $|\pi|^{\sfrac{a}{b}}$ and $|\pi|^{\sfrac{c}{d}}$, we obtain the type 2 point of radius $|\pi|^{\sfrac{a+c}{b+d}}$, and a standard argument based on the Euclidean algorithm (in particular, note that denominators never simplify).
	Then, $X$ is regular if and only if $x'$ is contained in the vertex set of the snc model $\X'$ obtained by further blowing up $\X$ exactly once at a closed point of the divisor corresponding to $x$, either upwards or downwards, and then finitely many times in the opposite direction, as any other further blowup would add some point of the path between $x$ and $x'$ to the corresponding vertex set.
	Then one concludes that the equivalence between $(i)$ and $(ii)$ holds by carefully tracking what happens with the continued fraction expansion of $a/b$ after performing upward or downward blowups.
	For example, the second expansion of the list corresponds to a model $\X'$ obtained by blowing up once in the same direction as the last blowup in $\X\to\PP^{1,\mathrm{an}}_R$ and then $a_{n+1}$ in the other direction; the other cases are similar.
	The equivalence with property $(v)$ follows now from the standard criterion for regularity in toric geometry, which can be studied directly or interpreted in terms of continued fraction expansions as above (following the point of view described in \cite{Popescu-Pampu2007}).
	The equivalence with the properties $(iii)$ and $(iv)$, which will not be used in the paper, are simple verifications left to the reader.
\end{proof}

The following result follows immediately from the condition $(v)$ above and Bezout Theorem, but any reader who's passionate about continued fractions can also obtain it from condition $(ii)$ as a simple exercise.

\begin{corollary}\label{corollary_existence_regular_fractional_annuli_coprime_multiplicities}
	Let $m,m'\in\N_{>0}$ such that $\gcd(m,m')=1$.
	Then there exists a regular fractional annulus whose ends have multiplicities $m$ and $m'$.
\end{corollary}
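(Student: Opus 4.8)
The plan is to deduce the corollary directly from the equivalence between conditions $(i)$ and $(v)$ of Lemma~\ref{proposition_regular-fractional-annuli}, by writing down an explicit fractional annulus whose unimodularity is furnished by the Bézout identity. Since the two ends of a fractional annulus play symmetric roles, I would first assume without loss of generality that $m\leqslant m'$.

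As $\gcd(m,m')=1$, the extended Euclidean algorithm produces integers $a,a'$ with $am'-a'm=1$. Replacing $(a,a')$ by $(a+km,a'+km')$ for a large enough $k\in\Z$ does not change $am'-a'm$, so I may arrange that $a,a'\geqslant0$. Setting $\alpha=a/m$ and $\beta=a'/m'$ one has $\alpha-\beta=1/(mm')>0$, so $\alpha>\beta\geqslant0$, and I would take
\[
X=\big\{x\in\aan \;\big|\; |\pi|^{\alpha}<|T(x)|<|\pi|^{\beta}\big\},
\]
a nonempty open fractional annulus in $\aan$. From $am'\equiv1\pmod{m}$ it follows that $\gcd(a,m)=1$, hence $\alpha$ is in lowest terms and the end of $X$ of radius $|\pi|^{\alpha}$ has multiplicity $m$; similarly $a'm\equiv-1\pmod{m'}$ gives $\gcd(a',m')=1$, so the other end has multiplicity $m'$. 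Since $\begin{pmatrix} a & a'\\ m & m'\end{pmatrix}$ has determinant $am'-a'm=1$, condition $(v)$ of Lemma~\ref{proposition_regular-fractional-annuli} then shows that $X$ is regular, which is exactly what is wanted.

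I do not anticipate any serious difficulty: the entire content sits in Lemma~\ref{proposition_regular-fractional-annuli}, and what is left is bookkeeping --- respecting the convention $b\leqslant b'$ of the lemma (taken care of by the reduction to $m\leqslant m'$) and recalling that the multiplicity of the end of a fractional annulus of radius $|\pi|^{p/q}$, with $p/q$ written in lowest terms, equals the denominator $q$. Alternatively, and slightly more laboriously, one could verify the claim from the continued-fraction description $(ii)$ of the lemma by building the continued fraction of $a'/m'$ from that of $a/m$ through one of the four admissible moves; but the determinant criterion $(v)$ together with Bézout is by far the shortest route.
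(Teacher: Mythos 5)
Your proof is correct and follows exactly the route the paper indicates: the paper derives the corollary "immediately from condition $(v)$ and Bézout," and you have simply filled in the (routine) bookkeeping of choosing nonnegative Bézout coefficients, checking that the fractions are in lowest terms, and ordering the ends. No gaps.
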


\begin{remarks}
\begin{enumerate}
	\item It is clear from the proposition that the regularity of fractional annuli does not descend nor it ascends with respect to base changes, even for base changes of degree prime with the multiplicities at the boundary.
	
	\item The proposition yields a simple effective procedure to resolve a fractional annulus, by adding the type 2 points of the skeleton that have multiplicity lower than or equal to the multiplicity of one of its boundary points.
	Because of the fact that regularity is not base-sensitive, and the fact that the canonical model of a semi-affinoid $X$ has a natural induced structure of formal scheme over $\fraks(X)^\circ$ (see \ref{def_Us}), the algorithm above works for generalized fractional annuli as well.
	In particular, by counting the number of downward blowups in this resolution process, we can observe that if $X\subset \A^{1,\an}_K$ is a fractional annulus of the form $\{|\pi|^{\sfrac{a}{b}}<|T(x)|<1\}$, with ${\sfrac{a}{b}}=[a_0;a_1,\ldots,a_{n}]$, then the smallest number of regular annuli in which $X$ can be broken is $\sum_{i\geqslant0}a_{2i}$.

	\item\label{remark_regular_with_rational_point_is_disk} A semi-affinoid $K$-curve $X$ that is regular and has a rational point over $K$ is necessarily a disc (see for example the implication (2)$\implies$(1) of \cite[Proposition 8.9]{Nicaise2009}).
	This fact and the examples above prompt us to ask whether it is true that a semi-affinoid curve that is not a disc is regular if and only if it has no $K(\pi^{\sfrac{1}{d}})$-rational point for every $d<\sum_{x\in\partial X}m(x)$.
\end{enumerate}
\end{remarks}

\bibliographystyle{alpha}                              
\bibliography{morph.bib}

\vfill

\end{document}